\DeclareFontFamily{U}{mathx}{\hyphenchar\font45}
\DeclareFontShape{U}{mathx}{m}{n}{
      <5> <6> <7> <8> <9> <10>
      <10.95> <12> <14.4> <17.28> <20.74> <24.88>
      mathx10
      }{}
\DeclareSymbolFont{mathx}{U}{mathx}{m}{n}
\DeclareMathAccent{\widecheck}{0}{mathx}{"71}
\DeclareMathAccent{\wideparen}{0}{mathx}{"75}
\newcommand{\myfootnote}[1]{
    \renewcommand{\thefootnote}{}
    \footnotetext{\scriptsize#1}
    \renewcommand{\thefootnote}{\arabic{footnote}}
}
\theoremstyle{plain}
\def\beqn{\begin{eqnarray}}
\def\eeqn{\end{eqnarray}}
\def\beq{\begin{equation}}
\def\eeq{\end{equation}}
\DeclarePairedDelimiter\floor{\lfloor}{\rfloor}
\newtheorem{theorem}{Theorem}[section]
\newtheorem{corollary}[theorem]{Corollary} 
\newtheorem{lemma}[theorem]{Lemma} 
\newtheorem*{lemma*}{Lemma}
\newtheorem{proposition}[theorem]{Proposition} 
\theoremstyle{remark}
\newtheorem{remark}[theorem]{Remark}
\theoremstyle{definition}
\numberwithin{figure}{section}
\numberwithin{equation}{section}
\def\exittime{{\bf Z}}
\def\R{\mathbb R}
\def\P{\mathbb P}  
\def\Z{\mathbb Z}
\def\to{\rightarrow}
\def\dlim[#1][#2]{\lim_{#1 \to #2, #1 \neq #2}}
\def\Var{\text{$\mathbb{V}$ar}}
\def\Exp{\text{Exp}}
\newcommand{\be}{\begin{equation}}
\newcommand{\ee}{\end{equation}}
\newcommand{\lzb}{\llbracket}   
\newcommand{\rzb}{\rrbracket}   
\newcommand{\nn}{\nonumber}
\providecommand{\abs}[1]{\vert#1\vert}
\newcommand{\fl}[1]{\lfloor{#1}\rfloor}
\def\ind{\mathbf{1}}
\def\w{\omega} 
\def\wt{\widetilde}    \def\wc{\widecheck}
\newcommandx{\addmath}[2][1=]{\todo[linecolor=red,backgroundcolor=red!25,bordercolor=red,#1]{#2}}
\newcommandx{\fixtext}[2][1=]{\todo[linecolor=blue,backgroundcolor=blue!25,bordercolor=blue,#1]{#2}}
\newcommandx{\note}[2][1=]{\todo[linecolor=yellow,backgroundcolor=yellow!25,bordercolor=yellow,#1]{#2}}
  \def\cD{\mathcal{D}}
\def\cL{\mathcal{L}}
\def\bgeod#1#2{\mathbf{b}^{#1, #2}}
\newcommand\bbullet{{{\scaleobj{0.6}{\bullet}}}} 
\newcommand\dbullet{{\scaleobj{0.7}{\bullet}}}   
\def\coal{{\mathbf z}} 
\def\evec{e}    
\begin{document}

\title{Coalescence estimates for the corner growth model with exponential weights}
\author{Timo Sepp\"{a}l\"{a}inen\\[2pt]{\small University of Wisconsin--Madison}\\{\small seppalai@math.wisc.edu}\\{\small\tt http://www.math.wisc.edu/$\sim$seppalai}  \and Xiao Shen\\[2pt]{\small University of Wisconsin--Madison}\\{\small xshen@math.wisc.edu}\\{\small\tt http://www.math.wisc.edu/$\sim$xshen}}
\date{}


\maketitle

\myfootnote{Date: \today}
\myfootnote{2010 Mathematics Subject Classification. 60K35, 	60K37}
\myfootnote{Key words and phrases: coalescence, exit time, fluctuation exponent, geodesic, last-passage percolation, Kardar-Parisi-Zhang, random growth model.}
\myfootnote{Address: Mathematics Department, University of Wisconsin-Madison, Van Vleck Hall, 480 Lincoln Dr., Madison WI 53706-1388, USA.}
\myfootnote{T.\ Sepp\"al\"ainen was partially supported by  National Science Foundation grants  DMS-1602486  and DMS-1854619, and by the Wisconsin Alumni Research Foundation.}

\begin{abstract}   We establish estimates for the coalescence time of semi-infinite directed geodesics in the planar corner growth model with i.i.d.\ exponential weights. There are four estimates: upper and lower bounds on the probabilities of both fast and slow coalescence on the correct spatial scale with exponent $3/2$. Our proofs utilize a geodesic duality introduced by Pimentel and properties of the increment-stationary last-passage percolation process. For fast coalescence our bounds are new and they have matching optimal exponential order of magnitude. For slow coalescence we reproduce bounds proved earlier with integrable probability inputs, except that our upper bound misses the optimal order by a logarithmic factor.
\end{abstract}

\tableofcontents
\setcounter{tocdepth}{0}

\section{Introduction}

Random growth models of the first- and last-passage type have been a central part of the mathematical theory  of  spatial stochastic processes since the seminal work of Eden \cite{eden1961} and Hammersley and Welsh \cite{Ham-Wel-65}.   In these models,  growth proceeds along optimal paths called {\it geodesics}, determined by a random environment.  
The interesting and challenging  objects of study  are the {\it directed semi-infinite geodesics}.  These  pose an immediate existence question because  they are asymptotic objects and hence cannot be defined locally in a simple manner.   Once the existence question is resolved, questions concerning their multiplicity and geometric behavior such as coalescence arise.  

Techniques for establishing  the existence, uniqueness, and coalescence of semi-infinite geodesics were first introduced by Newman and co-authors in the 1990s \cite{How-New-97, How-New-01,fppcoal2, buse2} in the context of planar undirected first-passage percolation (FPP) with i.i.d.\ weights.   These methods were subsequently applied to the exactly solvable planar directed last-passage percolation  (LPP)  model with i.i.d.\ exponential weights  by Ferrari and  Pimentel  \cite{buse3} and Coupier \cite{multicoupier}.  This model is also known as the exponential {\it corner growth model} (CGM).  

A key technical  point here is that the strict curvature hypotheses of Newman's work can be verified in the exactly solvable LPP model.  A second key feature is that the exponential LPP model can be coupled with the totally asymmetric simple exclusion process (TASEP).  This connection provides another suite of powerful tools for analyzing exponential LPP. 

 The  work of  \cite{multicoupier} and  \cite{buse3}  established for the exponential LPP model that, almost surely  for a fixed direction, directed semi-infinite geodesics from each lattice point  are unique and they coalesce.    An alternative  approach to these results was recently developed by one of the authors  \cite{coalnew}, by utilizing properties of the increment-stationary LPP process.
 
 Once coalescence is known, attention turns to quantifying it: how fast do semi-infinite geodesics started from two distinct points coalesce?    The scaling properties of planar models in the Kardar-Parisi-Zhang (KPZ) class come into the picture here.   This class consists of interacting particle systems, random growth models and directed polymer models in two dimensions (one of which can be time) that share universal fluctuation  exponents and limit distributions from random matrix theory. For surveys of the field, see \cite{introkpz1, introkpz2}.

 It is expected that, subject to mild moment assumptions on the weights, planar FPP and LPP are members of the KPZ class.  
   It is conjectured in general and proved in exactly solvable cases that a geodesic of length $N$ fluctuates on the scale $N^{2/3}$.  Thus if two semi-infinite geodesics start at distance $k$ apart, we expect coalescence to happen on the scale $k^{3/2}$.  
   
The first step in the study of the coalescence exponent was taken by W\"{u}thrich \cite{Wut-02}. 
  He proved a lower bound with 
exponent $3/2-\epsilon$  for LPP on planar Poisson points.  This was the first application of   the  first-passage
percolation techniques 
of  Newman and coauthors  in the context of an exactly solvable
last-passage percolation model.
The second step in this direction was taken by Pimentel  \cite{dual} for  the exponential  CGM.  By relying on the TASEP connection, he proved that in a fixed direction,  the so-called dual geodesic graph is equal in distribution (modulo a lattice reflection) to the original geodesic tree.   Next, by appeal to fluctuation bounds derived by coupling techniques in  \cite{cuberoot}, he derived an asymptotic  lower bound on the coalescence time, with the expected exponent $3/2$.  
   
 The next step taken by    Basu, Sarkar, and Sly  \cite{ubcoal} utilized the considerably more powerful  estimates   from integrable probability.  For the upper bound  on the coalescence time, they established not only the correct order of magnitude $k^{3/2}$  but also upper and lower probability bounds of matching orders of magnitude. In the same paper the original estimate of Pimentel was also improved significantly.
  


{\it Our goal in taking up the speed of  coalescence is the development of proof techniques that rely only on  the stationary version of the model and avoid both  the TASEP connection and  integrable probability.}    The applicability of this approach then covers all 1+1 dimensional KPZ models with a tractable stationary version.  This includes not only  various last-passage models in both discrete and continuous space, but also  the four currently known solvable positive temperature polymer models \cite{chau-noac-18-ejp}.   Extension beyond solvable models may also be possible,  as indicated by the  exact KPZ fluctuation exponents derived in \cite{Bal-Kom-Sep-12-aihp}  for a class of  zero-range processes outside currently known exactly solvable models.    This is work left for the future.    Another somewhat  philosophical point is that capturing exponents should be possible without integrable probability.  This has been demonstrated for fluctuation exponents by   \cite{cuberoot} for the exponential LPP and by \cite{poly2} for a positive-temperature directed polymer model.  

The results of this paper come from a unified approach  based on controlling   the exit point of the geodesic in a stationary LPP process and on Pimentel's duality of geodesics and dual geodesics.  This involves coupling, random walk estimates, planar monotonicity, and distributional properties of the stationary LPP process.   
  Here are the precise contributions of the present paper (details in Section \ref{s:main-th}): 
\begin{enumerate}[(i)] \itemsep=1pt
\item The upper and lower  bounds for slow coalescence  originally  due to Basu et al.~\cite{ubcoal}, though our   upper bound falls short of the optimal order by a logarithmic factor  (Theorem \ref{t:main-small}).  Our contribution here is to give a proof without integrable probability inputs.  
\item Upper and lower  bounds for fast coalescence of matching exponential order (Theorem \ref{t:main-large}). These are new results.  
\item A   lower bound on the transversal  fluctuations of a directed semi-infinite geodesic which improves bounds obtainable   without integrable probability (Theorem \ref{fluc}). 
\item  Strengthened exit time estimates for the stationary LPP process without integrable probability, some uniform over endpoints beyond a given distance (Theorems \ref{t:large-ub}, \ref{t:small-lb}, \ref{t:small-ub}). 
\end{enumerate}
%

We mention two more general but related points about the exponential CGM.  

(a) When all directions are considered simultaneously, the overall  picture of semi-infinite geodesics is richer than the simple almost-sure-uniqueness-plus-coalescence valid for  a fixed direction.  Part of this was already explained by Coupier \cite{multicoupier}. Recently the global picture  of uniqueness and coalescence was captured in \cite{geogeodesic}.
 Coalescence bounds that go beyond the almost surely unique geodesics in a fixed direction are   left as an open problem for the future. 

(b)  Various geometric  features of the   exponential LPP process can now be proved without appeal to properties of TASEP.  An exception is a deep result of Coupier \cite{multicoupier}  on the absence of triple geodesics in any random direction.  This fact  currently has no proof except the original one that relies on  the {\it TASEP speed process} introduced in \cite{tasepspeed}.


\subsubsection*{Organization of the paper} 
Precise definition of the exponential LPP model and the main results appear in Section \ref{s:main}.    Section \ref{s:prelim} collects known facts about  the CGM used in the proofs. This includes   properties of the stationary growth process and the construction of the directed semi-infinite geodesics in terms of Busemann functions.  Section \ref{s:exit-pf} derives new exit time estimates for the geodesic of the stationary growth process, stated as Theorems \ref{t:large-ub}, \ref{t:small-lb}, and \ref{t:small-ub}.  In the final Section \ref{s:proofs} the exit time estimates and duality are combined to prove the main results of Section \ref{s:main}.  The appendix contains a random walk estimate and a moment bound on the Radon-Nikodym derivative between two product-form exponential distributions. 

\subsubsection*{Notation and conventions} 
 Points $x=(x_1,x_2),y=(y_1,y_2)\in\R^2$ are ordered coordinatewise: $x\le y$ iff  $x_1\le y_1$  and $x_2\le y_2$.    The $\ell^1$ norm is $\abs{x}_1=\abs{x_1}+\abs{x_2}$.  The origin of $\R^2$ is denoted by both $0$ and $(0,0)$.  The two standard basis vectors are $\evec_1=(1,0)$ and  $\evec_2=(0,1)$. 
 For $a\le b$  in $\Z^2 $,  $\lzb a, b\rzb =\{x\in\Z^2: a\le x\le b\}$ is the rectangle in $\Z^2$ with corners $a$ and $b$.  $\lzb a, b\rzb$ is a segment if $a$ and $b$ are on the same horizontal or vertical line. 
 We use $\lzb a - e_1, a\rzb $, $\lzb a - e_2, a\rzb $ to denote unit edges when it is clear from the context.
 Subscripts indicate restricted subsets of the reals and integers: for example 
 $\Z_{>0}=\{1,2,3,\dotsc\}$ and $\Z_{>0}^2=(\Z_{>0})^2$ is the positive first quadrant of the planar integer lattice.    
   For $0<\alpha<\infty$, $X\sim$ Exp$(\alpha)$ means that the random variable $X$ has exponential distribution with rate $\alpha$, in other words $P(X>t)=e^{-\alpha t}$ for $t>0$ and $E(X)=\alpha^{-1}$.  
 
\vspace{5mm}
\noindent\textbf{Acknowledgments.} The authors gratefully thank Manan Bhatia for pointing out a mistake in the proof of Theorem 4.1 in the previous published  version of this paper. The authors also would like to thank the anonymous referee for his/her
suggestions about improving the exposition of this paper.

\section{Main results} 
\label{s:main}

\begin{figure}
\captionsetup{width=0.8\textwidth}
\begin{center}

\begin{tikzpicture}[>=latex, scale=0.8]

			\draw[line width = 1.2mm, color=gray](0,0)--(0,1)--(2,1)--(2,3)--(3,3)--(3,4)--(5,4);
			
			\foreach \x in { 0,...,5}{
				\foreach \y in {0,...,4}{
					\fill[color=white] (\x,\y)circle(1.8mm); 
					\draw[ fill=lightgray](\x,\y)circle(1.2mm);

				}
			}

		\node at (-0.4,0) {$x$};
		\node at (5.4,4) {$y$};
			
		\end{tikzpicture}
	\end{center}	
	\caption{\small An up-right path between two integer points $x$ and $y$.}
	\label{fig_1}
\end{figure}

\subsection{The corner growth model and semi-infinite geodesics} 
The \textit{standard exponential corner growth model}  (CGM) is defined on the planar integer lattice  $\mathbb{Z}^2$ through  independent and identically distributed  (i.i.d.) weights  $\{\omega_{z}\}_{z\in \Z^2}$, indexed by the vertices of $\Z^2$, with marginal distribution $\w_z\sim$ Exp(1).  
  The {\it last-passage value} $G_{x,y}$ between two coordinatewise-ordered vertices $x\le y$ of $\Z^2$ is the maximal  total  weight of an  up-right nearest-neighbor path from $x$ to $y$: 
\beq\label{sec2G}G_{x,y} = \max_{z_{\bbullet}\, \in\, \Pi^{x,y}} \sum_{k=0}^{|y-x|_1} \omega_{z_k}\eeq
where $\Pi^{x,y}$ is the set of paths $z_{\bbullet}=(z_k)_{k=0}^{|y-x|_1}$ that satisfy $z_0=x$, $z_{|y-x|_1}=y$, and $z_{k+1}-z_k\in\{e_1, e_2\}$.   
The almost surely unique   maximizing path is  the point-to-point \textit{geodesic}.  
 $G_{x,y}$ is also called (directed) {\it last-passage percolation} (LPP). 
  If $x\le y$ fails our convention is  $G_{x,y} = -\infty$.  
 
A semi-infinite up-right path $(z_i)_{i=0}^\infty$ is a \textit{semi-infinite geodesic} if it is the maximizing path between any two points on this path, that is,  
$$\forall k<l \text{ in } \Z_{\geq0}:  \ (z_i)_{i=k}^l \in \Pi^{z_k, z_l} \quad \text{and}\quad  G_{z_k, z_l}= \sum_{i=k}^l \omega_{z_i} .$$
For a point $\xi\in\R^2_{\ge0}\setminus\{0\}$, the semi-infinite path $(z_i)_{i=0}^\infty$ is $\xi$-\textit{directed} if $z_i/|z_i|_1 \rightarrow \xi/|\xi|_1$ as $i\rightarrow \infty$. 

In the exponential CGM it is natural to index 
 spatial directions $\xi$  by a  real parameter  $\rho\in(0,1)$  through the equation  
\be\label{char1}
    \xi[\rho] = \left((1-\rho)^2, \rho^2 \right) . 
\ee
We call  $\xi[\rho]$  the {\it characteristic direction} associated to parameter $\rho$. This notion acquires meaning when we discuss the stationary LPP process in Section \ref{s:prelim}.  
Throughout, $N$ will be a scaling parameter that goes to infinity.   
When $\rho$ is understood, we write
\be\label{char1a}
    v_N = \left(\floor{N(1-\rho)^2}, \floor{N\rho^2} \right) 
\ee
for the lattice point moving in direction $\xi[\rho]$. 

The theorem below summarizes the key facts about directed semi-infinite geodesics that set the stage for our paper.  It goes back to the work of Ferrari and Pimentel \cite{buse3}  and Coupier \cite{multicoupier} on the CGM, and the general geodesic techniques introduced by Newman and coworkers \cite{How-New-97, How-New-01, fppcoal2, buse2}.  A different proof is given in \cite{coalnew}.  

\begin{theorem}\label{t:geod1}  Fix $\rho\in(0,1)$.  Then the following holds almost surely.  For each $x\in\Z^2$ there is a unique $\xi[\rho]$-directed semi-infinite geodesic $\bgeod{\,\rho}{x} =  \left(\bgeod{\,\rho}{x}_i\right)_{i=0}^\infty$ such that  $\bgeod{\,\rho}{x}_0=x$.   For each pair $x,y\in\Z^2$, the geodesics coalesce:  there is a coalescence point $\coal^\rho(x,y)$ such that 
$\bgeod{\,\rho}{x} \cap \bgeod{\,\rho}{y}  = \bgeod{\,\rho}{z} $ for $z=\coal^\rho(x,y)$. 
\end{theorem}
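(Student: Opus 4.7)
The plan is to construct a Busemann function $B^\rho$ in direction $\xi[\rho]$, extract the geodesic $\bgeod{\,\rho}{x}$ from $B^\rho$ by a local greedy rule, and derive uniqueness and coalescence from the structural properties of $B^\rho$ combined with planar monotonicity. First, I would build a random cocycle $B^\rho(x,y)$ on $\Z^2\times\Z^2$, on the same probability space as $\{\omega_z\}$, satisfying: (a) additivity $B^\rho(x,y)+B^\rho(y,z)=B^\rho(x,z)$; (b) recovery $\omega_x=\min\{B^\rho(x,x+\evec_1),\,B^\rho(x,x+\evec_2)\}$ at every $x$; and (c) along any down-right lattice path, the horizontal $B^\rho$-increments are i.i.d.\ Exp$(1-\rho)$, the vertical ones are i.i.d.\ Exp$(\rho)$, and the two families are mutually independent. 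The natural construction, to be made precise in Section~\ref{s:prelim} following \cite{coalnew}, couples the bulk weights with the increment-stationary LPP process of parameter $\rho$ (Exp$(1-\rho)$ boundary on the $\evec_1$-axis, Exp$(\rho)$ on the $\evec_2$-axis) and reads $B^\rho$ off the horizontal and vertical increments of the resulting stationary $G$. Property (c) is the well-known product-form invariance of that stationary distribution, (a) is immediate, and (b) is the one-step maximization recursion transported to the $v_N\to\infty$ limit.

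With $B^\rho$ in hand, define $\bgeod{\,\rho}{x}$ greedily: at every vertex $z$ on the path, step to $z+\evec_i$ with $i\in\{1,2\}$ such that $B^\rho(z,z+\evec_i)=\omega_z$. The minimizer is a.s.\ unique simultaneously at every $z$ by absolute continuity of the Busemann marginals. Iterating recovery and telescoping via additivity gives $\sum_{i=k}^{l-1}\omega_{\bgeod{\,\rho}{x}_i}=B^\rho(\bgeod{\,\rho}{x}_k,\bgeod{\,\rho}{x}_l)$ for all $k<l$, while for any competing up-right path $(x_j)$ between the same endpoints the inequality $\omega_{x_j}\le B^\rho(x_j,x_{j+1})$ telescopes to the same upper bound, so the greedy path is a semi-infinite geodesic. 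Its direction is $\xi[\rho]$ by identifying the stationary-increment $B^\rho$ with the bulk Busemann limit $\lim_N(G_{x,v_N}-G_{y,v_N})$ (a standard consequence of concentration of $G^\rho_{0,v_N}$ and tightness of the stationary exit point along the characteristic direction); once this identification is in place, the greedy path from $x$ coincides with an arbitrarily long initial segment of the point-to-point geodesic to $v_N$ for $N$ large, and the latter is $\xi[\rho]$-directed. Uniqueness among $\xi[\rho]$-directed semi-infinite geodesics through $x$ follows by the same identification: any such geodesic must locally agree with the point-to-point geodesic to $v_N$ for large $N$ and hence with the greedy rule.

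For coalescence, fix $x,y\in\Z^2$. I may assume they are anti-diagonally ordered, say $x_1\le y_1$ and $x_2\ge y_2$; the componentwise-comparable case reduces to this by running $\bgeod{\,\rho}{x}$ until it meets the anti-diagonal through $y$ and restarting from the intersection. Planar up-right monotonicity then forces $\bgeod{\,\rho}{x}$ and $\bgeod{\,\rho}{y}$ to cross only at a shared vertex, after which the greedy rule dictates identical tails; the first such shared vertex is $\coal^\rho(x,y)$. The main obstacle is to exclude the alternative scenario, that the two paths remain disjoint forever on opposite sides of an infinite separating strip. To rule this out I would follow the stationary-LPP strategy of \cite{coalnew}: by translation invariance and ergodicity of $(\omega,B^\rho)$ under lattice shifts, non-coalescence at a positive-probability offset would produce a positive density of disjoint $\xi[\rho]$-geodesic pairs, and a Burton--Keane style mass-transport argument on the planar greedy-geodesic tree then forces an incompatible excess of disjoint bi-infinite branches. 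Equivalently, the explicit product form (c) evaluated on any down-right path crossing the hypothesized separating strip yields a quantitative contradiction with the Exp$(1-\rho)$/Exp$(\rho)$ marginals. Either way, non-coalescence has probability zero, which closes the proof.
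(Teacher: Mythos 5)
The paper does not prove Theorem~\ref{t:geod1}: it is stated as background, with the proof attributed to \cite{buse3}, \cite{multicoupier}, and the Busemann-function argument of \cite{coalnew}. Your sketch follows precisely the \cite{coalnew} route (construct a cocycle $B^\rho$ with additivity, recovery, and stationary exponential increments; define the geodesic by the greedy rule; deduce the geodesic property by telescoping; argue directedness, uniqueness, and coalescence), and its ingredients are exactly what the paper later packages as Theorem~\ref{t:buse}. So you are taking the approach the authors implicitly endorse, not a genuinely different one.

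Two places in your outline are asserted rather than argued and would need to be filled in. For directedness and uniqueness you lean on the greedy path ``coinciding with an arbitrarily long initial segment of the point-to-point geodesic to $v_N$,'' but this coincidence is itself part of what must be proved. The clean order of arguments is: establish the Busemann limit of Theorem~\ref{t:buse}(i) along \emph{arbitrary} $\xi[\rho]$-directed sequences $u_N$; then, given any $\xi[\rho]$-directed semi-infinite geodesic $\gamma$ from $x$ with consecutive vertices $z,z'$, take $u_N$ on $\gamma$ to get $B^\rho(z,z')=\lim(G_{z,u_N}-G_{z',u_N})=\omega_z$, which forces $\gamma$ to obey the greedy rule, hence $\gamma=\bgeod{\,\rho}{x}$; directedness of $\bgeod{\,\rho}{x}$ itself comes from identifying $B^\rho$ with the increment-stationary process $G^\rho$ (equations \eqref{canbegeneral}--\eqref{needexact}) and controlling the exit point as in Theorem~\ref{t:exit1}, not from comparison with point-to-point geodesics. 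For coalescence, the Licea--Newman/Burton--Keane density-of-disjoint-trees argument you cite does work, but your alternative ``the explicit product form yields a quantitative contradiction'' is not a proof as stated; it would need to be developed into a modification or density argument. Neither gap is fatal, but that is where the actual work lives.
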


\begin{figure}[t]
\captionsetup{width=0.8\textwidth}
\begin{center}
 
\begin{tikzpicture}[>=latex, scale=1]

\draw[color=lightgray,line width = 0.3mm, ->] (0,-1)--(0,4);
\draw[color=lightgray,line width = 0.3mm, ->] (0,-1)--(5,-1);

\draw[lightgray,  line width = 0.5mm ]  (0,2) -- (3.5,2)--(3.5,-1);

\draw[loosely dotted, line width = 0.3mm, ->] (0,-1) -- (4.5,2+3/3.5);

\draw[ fill=black] (3,3)circle(1.2mm);

\draw[color=gray, dotted, line width = 1.2mm, ->] (0,0.3)-- (0.5,0.3) --(0.5,1)-- (1,1) -- (1,1.5)--(2,1.5)--(2.5,1.5) -- (2.5,3) -- (4,3) -- (4,4) ;

\draw[color=gray, dotted, line width = 1.2mm, ] (1.3,-1)-- (1.3,-0.5) --(1.8,-0.5) -- (1.8, 0) -- (2.3,0) -- (2.3,1) -- (3,1) -- (3,3);

\fill[color=white] (0,-1)circle(1.7mm); 
\draw[ fill=lightgray](0,-1)circle(1mm);
\node at (-0.7,0-1) {$(0,0)$};

\draw[ fill=black] (3,3)circle(1.2mm);
\draw[ fill=white](3,3)circle(0.7mm);

\fill[color=white] (3.5,2)circle(1.7mm); 
\draw[ fill=lightgray](3.5,2)circle(1mm);
\node at (4.1,2) {$v_N$};

\draw[color=lightgray,line width = 0.3mm, ->] (0+7,-1)--(0+7,4);
\draw[color=lightgray,line width = 0.3mm, ->] (0+7,-1)--(5+7,-1);

\draw[lightgray,  line width = 0.5mm ]  (7,2) -- (3.5+7,2)--(3.5+7,-1);

\draw[loosely dotted, line width = 0.3mm, ->] (0+7,-1) -- (4.5+7,2+3/3.5);

\draw[color=gray, dotted, line width = 1.2mm, ->] (0+7,-0.3) --(0.5+7, -0.3)-- (0.5+7,0.2) --(1+7,0.2)-- (1+7,0.5) --(1+7,1) -- (1+7,1) -- (1.5+7 ,1)-- (1.5+7,1.5)--(2+7,1.5)--(2.5+7,1.5) -- (2.5+7,2)  -- (2.5+7,3) -- (3+7+1,3) -- (3+7+1,4);
\draw[color=gray, dotted, line width = 1.2mm,] (0.7+7,-1) -- (8, -1) -- (8, -0.5) -- (8.5, -0.5) -- (8.5, 0.7) -- (9, 0.7) -- (9,1.5);
\fill[color=white] (0+7,-1)circle(1.7mm); 
\draw[ fill=lightgray](0+7,-1)circle(1mm);
\node at (-0.7+7,0-1) { $(0,0)$};

\draw[ fill=black] (2+7,1.5)circle(1.2mm);
\draw[ fill=white](2+7,1.5)circle(0.7mm);

\fill[color=white] (3.5+7,2)circle(1.7mm); 
\draw[ fill=lightgray](3.5+7,2)circle(1mm);
\node at (4.1+7,2) {$v_N$};

\draw[ fill=lightgray](0, 0.3)circle(1mm);

\node at (1.3, -1.4) {\footnotesize${rN^{2/3}}$};

\node at (-0.72, 0.3) {\footnotesize${rN^{2/3}}$};
 
\draw[ fill=lightgray](1.3, -1)circle(1mm);

\draw[ fill=lightgray](7, -0.3)circle(1mm);

\draw[ fill=lightgray](0.7+7, -1)circle(1mm);
\node at (0.8+7, -1.4) {\footnotesize${\delta N^{2/3}}$};

\node at (-0.75+7, -0.3) {\footnotesize${\delta N^{2/3}}$};

\end{tikzpicture}

\end{center}
\caption{\small Coalescence of $\xi[\rho]$-directed semi-infinite geodesics. The black circle marks the coalescence point: on the left it is $\coal^\rho(\floor{rN^{2/3}}e_1, \floor{rN^{2/3}}e_2)$, and on the right $\coal^\rho(\floor{\delta N^{2/3}}e_1, \floor{\delta N^{2/3}}e_2)$.   On the left for large $r$ the geodesics are likely to coalesce outside the rectangle $\lzb  0, v_N\rzb $,  while on the right for small $\delta$  the geodesics are likely to coalesce inside the rectangle $\lzb  0, v_N\rzb $.  }
\label{sec2fig1}
\end{figure}

\subsection{Coalescence estimates for  semi-infinite geodesics in a fixed direction} 
\label{s:main-th} 
 
  The two main results below give upper and lower bounds on the probability that two $\xi[\rho]$-directed  semi-infinite geodesics initially separated by a distance of order $N^{2/3}$  coalesce inside  the rectangle $\lzb0,v_N\rzb$.  The theorems are separated according to whether   the starting points of the geodesics are close to each other or far apart  on the scale $N^{2/3}$.    See the illustration in  Figure \ref{sec2fig1}. 
As introduced in Theorem \ref{t:geod1},  $\coal^\rho(x,y)$ is the coalescence point of the geodesics $\bgeod{\,\rho}{x}$ and  $\bgeod{\,\rho}{y}$.

\begin{theorem}\label{t:main-small}
For each  $0<\rho<1$ 
there exist finite positive constants 
$\delta_0$, $C_1$,  $C_2$ and  $N_0$ that depend only on $\rho$ and for which the following holds: 
whenever $N\geq N_0$  and $ N^{-2/3}\le\delta\le \delta_0$, 
\beq \label{1} C_1\delta \leq \mathbb{P} \big\{\coal^\rho(\floor{\delta N^{2/3}}e_1 , \floor{\delta N^{2/3}}e_2) \not\in \lzb 0, v_N\rzb \big\} \leq C_2 \abs{\log\delta\hspace{0.7pt}}^{2/3}\delta.\eeq
\end{theorem}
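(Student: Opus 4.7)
The plan is to follow the unified approach outlined in the introduction, combining Pimentel's geodesic duality with the exit-point estimates stated as Theorems \ref{t:small-lb} and \ref{t:small-ub}.

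The first and most delicate step will be to translate the coalescence event
\[
\{\coal^\rho(\floor{\delta N^{2/3}}e_1,\, \floor{\delta N^{2/3}}e_2) \notin \lzb 0, v_N\rzb\}
\]
into an exit-point event for a stationary LPP process. By Pimentel's duality, the $\xi[\rho]$-directed geodesic tree equals in distribution, modulo a lattice reflection, the dual (down-left) geodesic tree. Under this reflection, the two starting points $\floor{\delta N^{2/3}}e_i$ correspond to sites at distance of order $\delta N^{2/3}$ from the opposite corner $v_N$, so the original geodesics coalesce inside the rectangle if and only if a pair of dual semi-infinite geodesics emanating from these two sites near $v_N$ coalesce inside $\lzb 0, v_N\rzb$. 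I would then install stationary boundary weights with parameter $\rho$ along the appropriate two edges of the rectangle, so that the stationary geodesic moves down-left from $v_N$, and use the Busemann-function construction of semi-infinite geodesics together with planar monotonicity to identify this dual coalescence event with the event that the exit point $Z$ from the boundary of the stationary LPP satisfies $|Z|\le c\hspace{0.5pt}\delta N^{2/3}$ for a suitable constant $c>0$. This would reduce the theorem to the two-sided estimate
\[
C_1 \delta \;\le\; \P\big(|Z|\le c\delta N^{2/3}\big) \;\le\; C_2 |\log\delta|^{2/3}\delta.
\]

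The lower bound then follows from Theorem \ref{t:small-lb}, which supplies the requisite $\delta$-lower bound on the probability that the exit point lies in a window of size $\delta N^{2/3}$ about the origin, and the upper bound from Theorem \ref{t:small-ub}, which gives the matching (up to the logarithmic factor) upper bound. No further probabilistic input is needed once the reduction is in place.

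The main obstacle is the reduction itself. One must carefully track, through Pimentel's duality and the Busemann-function representation, how the two starting points near the origin correspond to an exit-point geometry near $v_N$, and argue via planar monotonicity that non-coalescence inside $\lzb 0, v_N\rzb$ is equivalent — up to the right value of $c$ — to the event $\{|Z|\le c\delta N^{2/3}\}$. The suboptimal logarithmic factor in the upper bound is inherited directly from Theorem \ref{t:small-ub}; the sharp $\delta$ upper bound is proved in \cite{ubcoal} using integrable probability, and the $|\log\delta|^{2/3}$ loss appears to be the price paid for avoiding that machinery.
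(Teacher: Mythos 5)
Your overall strategy is the right one — Pimentel's duality plus the exit-time estimates of Theorems \ref{t:small-lb} and \ref{t:small-ub} — and that is exactly how the paper proceeds. But the reduction step, which you correctly flag as the crux, is not what you describe, and in the form you state it I do not believe it is true.

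You say the primal geodesics from $\floor{\delta N^{2/3}}e_1$ and $\floor{\delta N^{2/3}}e_2$ coalesce inside $\lzb 0, v_N\rzb$ if and only if ``a pair of dual semi-infinite geodesics emanating from these two sites near $v_N$ coalesce inside $\lzb 0, v_N\rzb$.'' That is not the correspondence. The duality (Theorem \ref{t:dual}) gives a \emph{distributional} equality between the primal and dual geodesic trees, but the pathwise relation the proof actually uses is the \emph{non-crossing} of primal and dual geodesics (Theorem \ref{t:dual}(iii), illustrated in Figure \ref{disjointness}). Non-crossing gives the pathwise equivalence: $\coal^\rho(\floor{\delta N^{2/3}}e_1,\floor{\delta N^{2/3}}e_2)\notin\lzb 0,v_N\rzb$ if and only if \emph{some} dual geodesic started at a point outside $\lzb 0,v_N\rzb-e^*$ passes through the small square $\lzb(0,0),(\floor{\delta N^{2/3}},\floor{\delta N^{2/3}})\rzb$ near the origin, thereby separating the two primal geodesics (see Figure \ref{fig12}). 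There is no dual coalescence event here, and no special pair of dual geodesics ``near $v_N$''; one quantifies over all dual starting points outside the rectangle.

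Two further geometric details also go astray. First, the stationary LPP process used to measure exit times is based at $-e^*$ with south--west Busemann boundary weights, and its geodesics run \emph{up-right}; Proposition \ref{sec3prop1} identifies these with the restrictions of the down-left dual geodesics. You describe installing boundary weights so the stationary geodesic ``moves down-left from $v_N$,'' which is a rotation of the actual setup and would not match Theorems \ref{t:small-lb} and \ref{t:small-ub} as stated. Second, the event you should be bounding is
\[
\bigl\{\exists\, z\notin\lzb 0,v_N\rzb : |\exittime^{\,0\to z}|\le\delta N^{2/3}\bigr\},
\]
i.e.\ a uniform-over-endpoints statement, not a bound on a single exit point $Z$; this is why Theorems \ref{t:small-lb} and \ref{t:small-ub} are phrased with the quantifier over all $z$ outside the rectangle, and that quantifier is essential. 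Once the equality \eqref{r804} between the non-coalescence probability and this exit-time event is in hand, the lower and upper bounds do indeed follow immediately from those two theorems, as you anticipate.
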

The requirement $\delta\ge N^{-2/3}$ in Theorem \ref{t:main-small} is needed only for the lower bound and only to ensure that $\floor{\delta N^{2/3}}\neq 0$.

\begin{theorem}\label{t:main-large}
For each  $0<\rho<1$ there exist finite positive constants $r_0, C_1, C_2$ and  $N_0$ that depend only on $\rho$ and for which the following holds: whenever  $N \geq N_0$ and $r_0\leq r \leq ((1-\rho)^2\wedge\rho^2) N^{1/3} $,
\beq e^{-C_1r^3} \leq \mathbb{P} \big\{\coal^\rho(\floor{r N^{2/3}}e_1, \floor{r N^{2/3}}e_2) \in \lzb 0, v_N\rzb \big\} \leq e^{-C_2 r^3}.\eeq
\end{theorem}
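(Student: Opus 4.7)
The plan is to deduce Theorem \ref{t:main-large} by combining the exit-point estimates for the stationary LPP process (Theorems \ref{t:large-ub}--\ref{t:small-ub}) with Pimentel's geodesic/dual-geodesic duality. Since by duality the dual $\xi[\rho]$-geodesic tree has the same distribution as the forward tree up to a lattice reflection, a coalescence event for forward geodesics inside $\lzb 0, v_N\rzb$ can be recast as an exit-point event for a reflected stationary LPP geodesic emerging from near $v_N$.

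For the upper bound I would argue as follows. If the two forward geodesics starting at $\floor{rN^{2/3}}e_1$ and $\floor{rN^{2/3}}e_2$ coalesce inside $\lzb 0, v_N\rzb$, then the dual geodesic that separates their two basins of attraction is squeezed between the axis segments emanating from those points, and is thereby forced to exit the reflected stationary boundary at distance at least of order $rN^{2/3}$ from its characteristic exit point. The bound $\mathbb{P}(\,\cdot\,)\le e^{-C_2 r^3}$ then follows directly from the upper-tail exit-point estimate of Theorem \ref{t:large-ub}. The cap $r\le((1-\rho)^2\wedge\rho^2)N^{1/3}$ ensures that the displacement $rN^{2/3}$ still lies inside the segments of $\lzb 0, v_N\rzb$ on the axes, which is the regime in which the exit-point tail is valid.

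For the matching lower bound I would tilt the boundary intensity of the associated stationary LPP by $\rho\mapsto\rho' = \rho + c\, r N^{-1/3}$ for a suitable constant $c$. Under the tilted measure the characteristic direction shifts so that the exit-point event used in the upper-bound step becomes typical, and hence has probability bounded below by a positive constant. Returning to the original product-exponential boundary distribution pays a Radon-Nikodym factor whose logarithm is, by the moment bound for the Radon-Nikodym derivative announced in the appendix, of order $r^3$ with high probability; combining these two pieces with Cauchy-Schwarz yields $\mathbb{P}(\,\cdot\,)\ge e^{-C_1 r^3}$. The coalescence point produced by this construction will then lie inside $\lzb 0, v_N\rzb$ by the duality translation run in reverse.

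The main technical obstacle is the calibration in the lower-bound step: the tilt amplitude $c\, r N^{-1/3}$ must simultaneously push the characteristic exit displacement to $\Theta(rN^{2/3})$ and keep the Radon-Nikodym cost sharp at order $r^3$. Both constraints are dictated by the KPZ scaling identity $r^3 = (rN^{2/3})^3/N^2$, and reconciling them is exactly where the coalescence exponent $3/2$ is forced to appear. A secondary but nontrivial obstacle is translating the geometric coalescence event into the clean exit-point statement of the first step, which requires carefully tracking the reflection built into Pimentel's duality and checking that the relevant dual geodesic really does sit in the regime where Theorem \ref{t:large-ub} applies uniformly in the starting point.
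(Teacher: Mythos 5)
Your proposal follows essentially the same route as the paper: duality converts coalescence inside $\lzb 0,v_N\rzb$ into the event that every exit point of the associated stationary LPP process is at distance at least $rN^{2/3}$ from the corner, the lower bound is then exactly the tilting/Cauchy--Schwarz/Radon--Nikodym argument (this is Theorem \ref{t:large-ub}), and the upper bound comes from the one-point upper-tail exit estimate. One small correction: that upper-tail estimate is Theorem \ref{t:exit1}, not Theorem \ref{t:large-ub} --- the latter contains only the lower bound on the probability of the all-exit-points-far event.
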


The requirement $ r \leq ((1-\rho)^2\wedge\rho^2) N^{1/3} $ in Theorem \ref{t:main-large} is needed only for the lower bound and only   to ensure that both geodesics start inside the rectangle $\lzb 0,v_N\rzb $.
  
If we replace one of the starting points with the origin $0$, the upper  bound of  Theorem \ref{t:main-small} and the lower  bound of Theorem \ref{t:main-large} hold automatically because   $\bgeod{\,\rho}{0}$ stays between  $\bgeod{\,\rho}{(\floor{r N^{2/3}},0)}$ and $\bgeod{\,\rho}{(0,\floor{r N^{2/3}})}$. The following corollary states that the other two tail estimates also hold with possibly different constants under this alteration in the geometry. 

\begin{corollary}\label{samelevel}
For each $0< \rho <1$ there exist finite positive constants $\delta_0, r_0, C_1, C_2$ and $N_0$ that depend only on $\rho$ and for which the following holds: whenever $N \geq N_0$, $N^{-2/3}\leq \delta \leq \delta_0$, and  $r\geq r_0$, 
\begin{enumerate}[{\rm(i)}]
\item $\mathbb{P} \big\{\coal^\rho(0, \floor{\delta N^{2/3}}e_1)  \not\in \lzb 0, v_N\rzb \big\} \geq C_1\delta$ and 
\item $\mathbb{P} \big\{\coal^\rho(0,\floor{r N^{2/3}}e_1) \in \lzb 0, v_N\rzb \big\} \leq e^{-C_2 r^3}.$
\end{enumerate}
\end{corollary}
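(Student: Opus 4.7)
The plan is to treat Corollary \ref{samelevel} in two steps.

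First, the two ``easy'' companion bounds -- the upper bound that would pair with (i) and the lower bound that would pair with (ii) -- are already in hand from Theorems \ref{t:main-small} and \ref{t:main-large}. With $x_1=0$, $x_2=\floor{aN^{2/3}}e_1$, $x_3=\floor{aN^{2/3}}e_2$ (where $a$ stands for either $\delta$ or $r$), the geodesic $\bgeod{\,\rho}{x_1}$ is sandwiched between $\bgeod{\,\rho}{x_2}$ and $\bgeod{\,\rho}{x_3}$. A short case analysis on the coalescence tree shows that $\coal^\rho(x_1,x_2)\le \coal^\rho(x_2,x_3)$ coordinatewise, hence $\{\coal^\rho(x_2,x_3)\in \lzb 0,v_N\rzb\}\subseteq\{\coal^\rho(x_1,x_2)\in \lzb 0,v_N\rzb\}$; the two automatic bounds follow.

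For the two directions actually asserted -- the $C_1\delta$ lower bound in (i) and the $e^{-C_2r^3}$ upper bound in (ii) -- I would run the same exit-point-plus-duality strategy that drives Theorems \ref{t:main-small} and \ref{t:main-large}. The key step is Pimentel's duality, which identifies $\{\coal^\rho(0,\floor{aN^{2/3}}e_1)\not\in \lzb 0,v_N\rzb\}$ with the event that some dual $\xi[\rho]$-directed semi-infinite geodesic initiated from the north or east boundary of $\lzb 0,v_N\rzb$ lands on the south edge at a vertex strictly between $0$ and $\floor{aN^{2/3}}e_1$. After the reflection appearing in the duality, the landing vertex of such a dual geodesic has the same distribution as the exit point of a forward stationary geodesic from a rectangle of scale $N$ with an appropriately chosen boundary rate, so the stationary exit-point estimates of Theorems \ref{t:large-ub}, \ref{t:small-lb}, and \ref{t:small-ub} apply.

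For (i), the small-exit lower bound (Theorem \ref{t:small-lb}), transported through the duality, exhibits a concrete sub-event -- a dual geodesic landing on the short segment of length $\delta N^{2/3}$ adjacent to the origin -- of probability at least $C_1\delta$ that forces coalescence outside $\lzb 0,v_N\rzb$. For (ii), coalescence inside $\lzb 0,v_N\rzb$ forces every dual geodesic from the north-east boundary to avoid a segment of length $rN^{2/3}$; by duality this is the event that a stationary exit point exceeds $rN^{2/3}$, and the large-exit upper bound (Theorem \ref{t:large-ub}) yields the desired $e^{-C_2r^3}$ estimate. I expect the main obstacle to be the duality bookkeeping: the reference rectangle and the boundary rate used in Theorems \ref{t:large-ub}--\ref{t:small-ub} were set up for the two-axis geometry of Theorems \ref{t:main-small} and \ref{t:main-large}, so one has to track the reflection in the duality and choose the stationary parameter carefully so that the same-line geometry of the corollary plugs cleanly into those off-the-shelf estimates. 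Once that geometric translation is in place, no new probabilistic inputs are needed.
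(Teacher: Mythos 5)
Your reduction via duality is the same as the paper's: the event $\{\coal^\rho(0,\floor{aN^{2/3}}e_1)\notin\lzb 0,v_N\rzb\}$ is identified with the existence of a $z$ outside $\lzb 0,v_N\rzb$ whose stationary geodesic has exit time in $[1, aN^{2/3}]$ (note: one-sided, on the $e_1$-axis). The gaps are in how you then supply the two probabilistic estimates.

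For (ii) you misidentify the event and cite the wrong theorem. ``Every dual geodesic avoids the segment $\lzb 0,\floor{rN^{2/3}}e_1\rzb$'' is \emph{not} the event that an exit point exceeds $rN^{2/3}$: a geodesic can also avoid the segment by exiting on the $e_2$-axis ($\exittime\le -1$), and for the endpoint $v_N$ that alternative has probability of order $1$, not $e^{-Cr^3}$. Moreover, Theorem \ref{t:large-ub} is a \emph{lower} bound of the form $\mathbb{P}^\rho\{\forall z:\ |\exittime|\ge rN^{2/3}\}\ge e^{-Cr^3}$; it cannot produce the upper bound $e^{-C_2r^3}$ you need. The paper's fix is to perturb the target to $v_N+\floor{\frac1{10}rN^{2/3}}e_1$ and prove $\mathbb{P}^\rho\{1\le\exittime^{0\to v_N+\floor{\frac1{10}rN^{2/3}}e_1}\le rN^{2/3}\}\ge 1-e^{-Cr^3}$, combining Corollary \ref{sec3cor} (bound \eqref{weprove}, which kills the negative-exit alternative precisely because the endpoint sits to the right of the characteristic ray) with a base-point shift via Lemma \ref{sec3lem2} and Theorem \ref{t:exit1}. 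Without that perturbation the argument fails.

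For (i) the same one-sided issue appears. Theorem \ref{t:small-lb} controls $|\exittime|\le\delta N^{2/3}$, i.e.\ landing on the union of two short segments, one on each axis; the corollary needs the dual geodesic to land specifically on the $e_1$-side segment, and there is no symmetry between the axes for general $\rho$, so you cannot extract the one-sided bound $C_1\delta$ from the two-sided statement. One must rerun the pigeonhole argument of Theorem \ref{t:small-lb} restricted to the positive axis, partitioning $[0,rN^{2/3}]$ into $\asymp\delta^{-1}$ intervals and anchoring it with the perturbed-endpoint estimate above in place of \eqref{exit47}. So the missing idea in both parts is the same: the one-sided analogue of the exit-time control, obtained by shifting the endpoint off the characteristic direction.
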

  
\begin{remark} \label{rmkendpt}  Two comments about the results. 

(a)  The  statements of the theorems are valid for $v_N=(\fl{Na}, \fl{Nb})$ for any fixed $a,b>0$,  with new constants that depend also on $a,b$.  The characteristic point  $v_N$ of \eqref{char1a}  is simply one  natural choice.  

(b) The constants in the theorems that depend on $\rho\in(0,1)$ can be taken fixed uniformly for all $\rho$ in any compact subset of $(0,1)$.  
\end{remark}

 For  direct comparison with   \cite{ubcoal}, we state   two corollaries  for geodesics whose  locations are not expressed in terms of the large parameter $N$.

\begin{corollary}\label{c:main-small}
For each  $0<\rho<1$ 
there exist finite positive constants $R_0$, $C_1$ and  $C_2$   that depend only on $\rho$ and for which the following holds:   whenever $k\ge 1$ and $R\ge R_0$, 
\beq C_1 R^{-2/3} \leq \mathbb{P} \big\{\coal^\rho(\floor{k^{2/3}}e_1,\floor{k^{2/3}}e_2) \not\in\lzb 0, v_{Rk}\rzb \big\} \leq C_2(\log R)^{2/3}R^{-2/3}.\eeq
\end{corollary}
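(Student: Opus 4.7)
The plan is to derive Corollary \ref{c:main-small} directly from Theorem \ref{t:main-small} by a cosmetic change of parameters. Theorem \ref{t:main-small} is written with a scaling parameter $N$ and a small parameter $\delta$, and quantifies escape of the coalescence point from $\lzb 0, v_N\rzb$ when the two geodesics start at $\floor{\delta N^{2/3}}e_1$ and $\floor{\delta N^{2/3}}e_2$. To reduce the corollary to the theorem, I would set $N := Rk$ and $\delta := R^{-2/3}$. Then $\delta N^{2/3} = R^{-2/3}(Rk)^{2/3} = k^{2/3}$ on the nose, so $\floor{\delta N^{2/3}} = \floor{k^{2/3}}$, and $v_N = v_{Rk}$; the event in the theorem becomes the event in the corollary.

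Next I would check that the hypotheses of Theorem \ref{t:main-small} are met. The condition $N\ge N_0$ becomes $Rk\ge N_0$, which holds whenever $R \ge N_0$ since $k\ge 1$; the lower bound $N^{-2/3}\le \delta$ becomes $(Rk)^{-2/3}\le R^{-2/3}$, again automatic for $k\ge 1$; and $\delta\le \delta_0$ reads $R^{-2/3}\le \delta_0$, which holds for $R\ge \delta_0^{-3/2}$. Hence taking $R_0 := \max(N_0,\,\delta_0^{-3/2})$ makes all hypotheses hold uniformly in $k\ge 1$ once $R\ge R_0$.

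Feeding these choices into Theorem \ref{t:main-small} gives the lower bound $\mathbb P(\cdot)\ge C_1\delta = C_1 R^{-2/3}$ directly, and the upper bound $\mathbb P(\cdot)\le C_2|\log\delta|^{2/3}\delta = C_2(\tfrac{2}{3}\log R)^{2/3}R^{-2/3}$, with the harmless constant $(2/3)^{2/3}$ absorbed into a redefined $C_2$. I do not expect any real obstacle: the only point that needs verification is that the floor identity holds with no $\pm 1$ slack, which is what made the choice $\delta=R^{-2/3}$ natural in the first place. The hypothesis $k\ge 1$ in the corollary exactly ensures $\delta\ge N^{-2/3}$, matching the role of that assumption in the theorem.
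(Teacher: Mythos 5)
Your argument is correct and is exactly the substitution the paper uses: set $N=Rk$, $\delta=R^{-2/3}$, note $\delta N^{2/3}=k^{2/3}$ exactly, and take $R_0=N_0\vee\delta_0^{-3/2}$ so that the hypotheses of Theorem \ref{t:main-small} hold for all $k\ge 1$ and $R\ge R_0$. The only extra detail you supply is the explicit check of all three hypotheses and the absorption of the constant $(2/3)^{2/3}$ from $|\log\delta|^{2/3}=((2/3)\log R)^{2/3}$ into $C_2$, which the paper leaves implicit; otherwise it is the same proof.
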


Corollary \ref{c:main-small} is derived from Theorem \ref{t:main-small} as follows. 
 Set $R_0= N_0\vee \delta_0^{-3/2}$.   Given $k\ge 1$  and $R\ge R_0$,   let  $N=Rk\ge N_0$ and $\delta=R^{-2/3} \le  \delta_0$.  
   Now $k^{2/3}=\delta N^{2/3}$. The next Corollary \ref{c:main-large} below is derived from Theorem \ref{t:main-large} in a similar way.

\begin{corollary}\label{c:main-large}
For each  $0<\rho<1$ 
there exist finite positive constants $R_1$, $C_1$ and  $C_2$ that depend only on $\rho$ and for which the following holds: whenever $k\ge 1$ and   $((1-\rho)^2\wedge\rho^2)^{-1} k^{-1/3}\le R\le R_1$, 
\beq e^{-C_1R^{-2} } \leq \mathbb{P} \big\{\coal^\rho(\floor{k^{2/3}}e_1 ,\floor{k^{2/3}}e_2)  \in \lzb 0, v_{Rk}\rzb\big\} \leq e^{-C_2 R^{-2} }.\eeq
\end{corollary}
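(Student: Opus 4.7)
My plan is to deduce Corollary \ref{c:main-large} from Theorem \ref{t:main-large} by the parallel reparametrization used to pass from Theorem \ref{t:main-small} to Corollary \ref{c:main-small}. Given $k\ge 1$ and $R$ satisfying the hypotheses of the corollary, I set
\[
N := Rk \qquad\text{and}\qquad r := R^{-2/3}.
\]
The two key algebraic identities are
\[
rN^{2/3} = R^{-2/3}(Rk)^{2/3} = k^{2/3} \qquad\text{and}\qquad r^3 = R^{-2}.
\]
The first ensures that the event $\{\coal^\rho(\floor{rN^{2/3}} e_1, \floor{rN^{2/3}} e_2) \in \lzb 0, v_N\rzb\}$ appearing in Theorem \ref{t:main-large} coincides with the event $\{\coal^\rho(\floor{k^{2/3}} e_1, \floor{k^{2/3}} e_2) \in \lzb 0, v_{Rk}\rzb\}$ of the corollary (note $v_N = v_{Rk}$). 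The second converts the exponential rates $e^{-C_i r^3}$ of the theorem into $e^{-C_i R^{-2}}$ with the same constants.

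The remaining work is to check that the three hypotheses of Theorem \ref{t:main-large} are compatible with the hypotheses the corollary imposes on $k$ and $R$. The condition $r\ge r_0$ translates to $R\le r_0^{-3/2}$, which fixes the choice $R_1 := r_0^{-3/2}$. The upper bound $r \le ((1-\rho)^2\wedge\rho^2) N^{1/3}$ rearranges exactly to the stated lower bound $R \ge ((1-\rho)^2\wedge\rho^2)^{-1} k^{-1/3}$ that is built into the corollary. Finally the condition $N\ge N_0$ reads $Rk\ge N_0$, and combining with the lower bound on $R$ yields $Rk \ge ((1-\rho)^2\wedge\rho^2)^{-1} k^{2/3}$, which exceeds $N_0$ for all $k$ above some $\rho$-dependent threshold $k_0$.

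The only subtlety is the finitely many small cases $1\le k < k_0$ for which the interval for $R$ is non-empty but $N = Rk < N_0$. In that regime $R^{-2}$ lies in a compact range determined solely by $\rho$, so $e^{-C_1 R^{-2}}$ and $e^{-C_2 R^{-2}}$ are positive constants bounded away from $0$ and $1$; enlarging $C_1$ and shrinking $C_2$ absorbs these finitely many cases trivially into both inequalities without affecting the large-$k$ regime. I do not anticipate a genuine obstacle here---the argument is a direct rescaling of Theorem \ref{t:main-large}, and the entire proof consists of matching parameter ranges and checking the three hypotheses above.
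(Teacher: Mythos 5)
Your proposal is correct and follows exactly the rescaling route the paper intends (the paper simply says Corollary~\ref{c:main-large} is ``derived from Theorem~\ref{t:main-large} in a similar way'' to how Corollary~\ref{c:main-small} follows from Theorem~\ref{t:main-small}): set $N = Rk$, $r = R^{-2/3}$, observe $rN^{2/3} = k^{2/3}$ and $r^3 = R^{-2}$, and match hypotheses. You are also right that the hypothesis $N\ge N_0$ is the one condition that does \emph{not} translate for free --- unlike the $R\ge R_0$ situation in Corollary~\ref{c:main-small}, the upper constraint $R\le R_1$ gives no uniform lower bound on $Rk$ for small $k$ --- and the paper's one-line derivation glosses over this. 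Your fix is sound: for $k$ below the $\rho$-dependent threshold $k_0$, the admissible pairs $(k,R)$ keep $R$ in a compact interval, $v_{Rk}$ is piecewise constant in $R$, so only finitely many distinct events arise, and enlarging $C_1$ / shrinking $C_2$ absorbs them. The only thing left unverbalized is the (easy but non-vacuous) check that each of these finitely many probabilities lies strictly in $(0,1)$: positivity because, with the starting points and the target rectangle both non-degenerate, one can realize a weight configuration forcing early coalescence inside the box; strictly below $1$ because one can just as easily realize a configuration that pushes one geodesic out of $\lzb 0, v_{Rk}\rzb$ before coalescence. With that sentence added, your argument is complete and is, if anything, a more careful version of the paper's own.
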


 Again, the   lower bound $R\ge((1-\rho)^2\wedge\rho^2)^{-1} k^{-1/3}$ is imposed only to ensure that both  geodesics start inside the rectangle $\lzb 0,v_{Rk}\rzb $, for otherwise the probability in Corollary \ref{c:main-large} is zero.

  
The lower bounds in Theorem \ref{t:main-small}  and  Corollary \ref{c:main-small} are optimal, but the upper bounds are not due to the logarithmic factor.  Optimal upper and lower bounds (both of order $R^{-2/3}$) were proved for  Corollary \ref{c:main-small} by Basu, Sarkar, and Sly \cite{ubcoal} with   inputs from integrable probability.  Thus in Theorem \ref{t:main-small} and  Corollary \ref{c:main-small} our contribution is to provide bounds without relying on integrable probability.

Both upper and lower bounds in Theorem \ref{t:main-large} are new. 
The  upper bound $e^{-C_2r^3}$ of  Theorem \ref{t:main-large} improves significantly Pimentel's \cite{dual} asymptotic ($N\to\infty$)  upper bound   $Cr^{-3}$.   The improved bound  comes   from duality and an exit time estimate with the optimal exponential order,  obtained recently   by Emrah, Janjigian, and one of the authors in \cite{rtail1} without integrable probability inputs.  This exit time estimate was also derived independently by  Bhatia \cite{rtail2} with integrable probability inputs.     
In the intervening period between Pimentel's work and the present paper, Pimentel's bound   was improved to $e^{-Cr^{3/2}}$  (without sending $N$ to infinity)  in \cite{ubcoal} with inputs from integrable probability, see \cite[Remark 6.5]{ubcoal}.  
 



It is by now well-known that over distances of order $N$, geodesics fluctuate on the scale $N^{2/3}$.  A by-product of our proof is the following   lower bound on the size of the transversal fluctuation  of a semi-infinite geodesic. It is an improvement over previous bounds obtained without integrable probability (see Theorem 5.3(b) in \cite{CGMlecture}). 
 
\begin{theorem} \label{fluc}  For each $0<\rho <1$ there exist positive constants $C$, $N_0$ and $\delta_0$ that depend only on $\rho$ for which the following holds: whenever $N\geq N_0$ and $0<\delta \leq \delta_0$, 
\beq
\label{2} \mathbb{P}\bigl\{\bgeod{\,\rho}{(0,0)} \text{ enters the rectangle  
$\lzb  v_N - \delta N^{2/3}(e_1+e_2), v_N \rzb $
}\bigr\} \leq C\abs{\log\delta\hspace{0.5pt}}^{2/3}\delta.
\eeq
\end{theorem}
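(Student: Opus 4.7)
The plan is to deduce Theorem~\ref{fluc} from the upper bound in Theorem~\ref{t:main-small} by a direct planar monotonicity argument. The matching rate $|\log\delta|^{2/3}\delta$ in the two statements is the first indication that the transversal fluctuation event can be absorbed into a single coalescence failure on the correct spatial scale.

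Concretely, fix an absolute constant $c \ge 2$ (to be chosen at the end) and introduce the auxiliary $\xi[\rho]$-directed semi-infinite geodesics $\alpha = \bgeodr{\floor{c\delta N^{2/3}}\,e_1}$ and $\beta = \bgeodr{\floor{c\delta N^{2/3}}\,e_2}$. The standard planar monotonicity collected in Section~\ref{s:prelim} places $\alpha$ weakly below and $\beta$ weakly above $\bgeodr{(0,0)}$ until each has coalesced with it; in particular $\bgeodr{(0,0)}$ is sandwiched between $\alpha$ and $\beta$ prior to full coalescence. The central claim is: on the event
\[
E = \bigl\{\bgeodr{(0,0)} \text{ enters } R\bigr\}, \qquad R = \lzb v_N - \delta N^{2/3}(e_1+e_2),\, v_N\rzb,
\]
the initial transversal shift of $c\delta N^{2/3}$ forces $\alpha$ to exit $\lzb 0, v_N\rzb$ through its right edge strictly below $R$ and $\beta$ to exit through its top edge strictly to the left of $R$. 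Consequently $\alpha$ and $\beta$ cannot have met inside $\lzb 0, v_N\rzb$, whence $\coal^\rho(\floor{c\delta N^{2/3}}\,e_1, \floor{c\delta N^{2/3}}\,e_2) \notin \lzb 0, v_N\rzb$. Applying the upper bound of Theorem~\ref{t:main-small} with $c\delta$ in place of $\delta$ then yields $\mathbb{P}(E) \le C|\log\delta|^{2/3}\delta$ after absorbing $c$ into the constant.

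The main obstacle is making the geometric claim rigorous, that is, promoting the "closeness" of $\bgeodr{(0,0)}$ to $v_N$ into a genuine "separation" of $\alpha$ and $\beta$ at the level of $v_N$. A priori, two uncoalesced $\xi[\rho]$-directed geodesics may drift toward each other as they evolve. I would handle this by splitting into cases: if $\alpha$ (resp.\ $\beta$) has already coalesced with $\bgeodr{(0,0)}$ before the entry into $R$, then the merged path also enters $R$, and one iterates the argument on the surviving outer geodesic with a slightly larger constant $c'$, which amounts to a small-$\delta$ union bound and is what I expect to be responsible for the $|\log\delta|^{2/3}$ logarithmic correction; otherwise the outer geodesic sits strictly below or above $\bgeodr{(0,0)}$ at the left edge of $R$, and the original transversal shift of $c\delta N^{2/3}$ suffices, by up-right path geometry alone, to keep it disjoint from $R$ at the appropriate level of $v_N$.
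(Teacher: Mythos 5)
Your reduction to Theorem \ref{t:main-small} does not work, because the central inclusion is false. You need $E\subseteq\{\coal^\rho(\floor{c\delta N^{2/3}}e_1,\floor{c\delta N^{2/3}}e_2)\notin\lzb 0,v_N\rzb\}$, i.e.\ (contrapositive) that coalescence of $\alpha$ and $\beta$ at some $z\in\lzb 0,v_N\rzb$ prevents $\bgeodru[(0,0)]$... rather, prevents $\bgeod{\,\rho}{(0,0)}$ from entering $R$. But if $\alpha$ and $\beta$ coalesce at a point $z$ far below $R$ (say within $O(\delta N^{2/3})$ of the origin), then by sandwiching all three geodesics merge at $z$, and the single merged path is thereafter free to wander into $R$: coalescence carries no information about where the common continuation goes. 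In that scenario $E$ holds while the coalescence point lies inside $\lzb 0,v_N\rzb$, and bounding the probability that the merged path enters $R$ is exactly the transversal-fluctuation estimate you are trying to prove, so the argument is circular there. The two patches you propose do not close this. The ``up-right path geometry'' step is false: two non-crossing up-right paths started $c\delta N^{2/3}$ apart can come within one lattice step of each other without meeting, so an uncoalesced $\alpha$ is not forced to exit below $R$. And the iteration over larger constants $c'$ reproduces the same early-coalescence problem at every scale; to make coalescence before the level of $v_N$ unlikely you would need initial separation of constant order $N^{2/3}$, and a union bound over a geometric sequence of scales $c^k\delta$ costs $\sum_k c^k\delta\,\abs{\log(c^k\delta)}^{2/3}=O(1)$, not $O(\delta\abs{\log\delta}^{2/3})$.

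The paper's proof uses a different mechanism entirely, and the matching rate with \eqref{1} is not because one event contains the other but because both reduce to the same exit-time estimate. By the construction following Theorem \ref{t:buse} (see \eqref{canbegeneral} and Proposition \ref{sec3prop1}), the restriction of $\bgeod{\,\rho}{(0,0)}$ to $\lzb 0,v_N\rzb$ is the geodesic of a stationary LPP process with north--east Busemann boundary anchored near $v_N$. If the semi-infinite geodesic enters $R$, that stationary geodesic exits the NE boundary within $\delta N^{2/3}$ of the corner $v_N$, and the reflected version of Theorem \ref{t:small-ub} bounds this by $C\abs{\log\delta}^{2/3}\delta$. If you want to avoid the Busemann representation you would have to find another route to a small-exit-time event, but the coalescence bound of Theorem \ref{t:main-small} cannot substitute for it.
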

The  proofs in Section \ref{s:proofs} show that the probability in \eqref{2} is  essentially bounded above by the probability in \eqref{1}.  
With inputs from integrable probability, the upper bound $\abs{\log\delta\hspace{0.5pt}}^{2/3}\delta$ in \eqref{2} can be improved to $\delta$, the optimal upper bound for \eqref{1} obtained in \cite{ubcoal}. 


We turn to develop the groundwork for the proofs. 


%

\section{Preliminaries on the corner growth model} \label{s:prelim} 

This section covers aspects of the CGM used in the proofs. We provide illustrations, some intuitive arguments, and references to precise proofs.   The two main results are a fluctuation upper bound for the exit point of a stationary LPP process (Theorem \ref{t:exit1}) and the construction of semi-infinite geodesics with Busemann functions (Theorem \ref{t:buse}).  These are proved in article \cite{rtail1} and lecture notes  \cite{CGMlecture}, without using anything beyond the stationary LPP process. 


\subsection{Nonrandom properties} 

We begin with two basic features of LPP that   involve increments.  We state them for our exponential case but in fact these properties do not need any probability.  
  Let $G_{x,\hspace{0.5pt}\bbullet}$ be defined by \eqref{sec2G} and 
define    increment variables for $a\ge x+e_1$ and $b\ge x+e_2$  by 
$${I}^x_{ a} =  G_{x,a} - G_{x,a-e_1}\quad\text{and}\quad  {J}^x_{b} =  G_{x,b} - G_{x,b-e_2}.$$

The first property  is a monotonicity valid for  planar LPP.  Proof can be found for example 
in  Lemma 4.6 of \cite{CGMlecture}.  

\begin{lemma}\label{sec3lem1} For $y$ such that the increments are well-defined, 
$$  I^{x-e_1}_{ y}\le I^x_{ y} \leq   I^{x-e_2}_{y} \quad\text{and} \quad J^{x-e_2}_{y}\le  J^x_{ y} \leq   J^{x-e_1}_{y}.$$
\end{lemma}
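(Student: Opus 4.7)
My plan is to prove all four inequalities together by induction on $|y-x|_1$. Running the induction simultaneously on the four inequalities is natural because each inequality at $y$ will be derived from the others at $y - e_1$ and $y - e_2$, and the $e_1 \leftrightarrow e_2$ symmetry pairs the $I$ statements with the $J$ statements.

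The central tool is the "queuing" form of the LPP recursion. Starting from $G_{x, y} = \omega_y + \max(G_{x, y-e_1}, G_{x, y-e_2})$ (valid when $y - e_1 - e_2 \ge x$), a two-line computation yields
\[
I^x_y \;=\; \omega_y + (I^x_{y-e_2} - J^x_{y-e_1})^+,
\qquad
J^x_y \;=\; \omega_y + (J^x_{y-e_1} - I^x_{y-e_2})^+,
\]
together with the analogous identities using $x - e_1$ or $x - e_2$ as base point wherever the relevant $G$-values are defined. To push an inequality forward, for example to prove $I^x_y \le I^{x-e_2}_y$: the inductive hypothesis applied at $y - e_2$ gives $I^x_{y-e_2} \le I^{x-e_2}_{y-e_2}$, and applied at $y - e_1$ it gives $J^{x-e_2}_{y-e_1} \le J^x_{y-e_1}$; subtracting and taking positive parts preserves the inequality. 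The remaining three comparisons follow from the same mechanical update; each needs exactly two of the four hypotheses at smaller $y$.

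The base case is $|y-x|_1 = 1$, where every $G_{a,b}$ in sight is the sum of at most three weights and each inequality becomes a routine two-term $\max$ comparison. The real bookkeeping arises at the boundary, when $y_2 = x_2$ (or $y_1 = x_1$), since then the queuing recursion degenerates because $y - e_2$ (or $y - e_1$) falls outside the admissible quadrant for the base point $x$. In those boundary cases the inequalities either collapse to equalities — e.g.\ $I^{x-e_1}_y = I^x_y = \omega_y$ when $y_2 = x_2$ — or reduce to a single-geodesic bound such as $G_{x-e_2, y} \ge G_{x-e_2, y-e_1} + \omega_y$, obtained by appending the edge $(y-e_1)\to y$ to a geodesic from $x - e_2$ to $y - e_1$. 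These direct arguments dispatch all boundary cases cleanly.

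The only obstacle, and it is mild, is the bookkeeping: the three base points $x$, $x - e_1$, $x - e_2$ have slightly different admissible regions for the queuing identity, so one has to verify that on the common domain of all four increments the inductive step can be invoked against whichever predecessors are needed, and that the boundary lines are handled separately as above. Once that is organized, the proof reduces to one algebraic line per inequality.
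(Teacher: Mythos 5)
Your proof is correct. The paper gives no argument of its own here---it defers to Lemma 4.6 of \cite{CGMlecture}---and the proof there is essentially the one you give: induction on $|y-x|_1$ through the recursions $I^x_y=\omega_y+(I^x_{y-e_2}-J^x_{y-e_1})^+$ and $J^x_y=\omega_y+(J^x_{y-e_1}-I^x_{y-e_2})^+$, each of the four inequalities at $y$ consuming one $I$-hypothesis and one $J$-hypothesis at $y-e_2$ and $y-e_1$, with the boundary rows $y_2=x_2$ and $y_1=x_1$ settled by the degenerate equalities and the one-step superadditivity bound $G_{x-e_2,y}\ge G_{x-e_2,y-e_1}+\omega_y$ exactly as you describe.
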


\begin{figure}[t]
\captionsetup{width=0.8\textwidth}
\begin{center}
\begin{tikzpicture}[>=latex, scale=0.8]

\draw[line width=0.3mm, ->] (0,0) -- (8,0);
\draw[line width=0.3mm, ->] (0,0) -- (0,6);
\draw[gray ,dotted, line width=0.7mm] (0,0) -- (2,0) -- (2,0.5) -- (4, 0.5) -- (4,3) ;

\draw[lightgray, line width=0.5mm, ->] (3,3) -- (8,3);

\draw [decorate,decoration={brace,amplitude=10pt , mirror}, xshift=0pt,yshift=0pt]
(3,2.8) -- (7.8,2.8) ;
\node at (5.4,2.15) {$I^x_{z\,+\,\dbullet \, e_1}$};

\draw[lightgray, line width=0.5mm, ->] (3,3) -- (3,6);
\draw [decorate,decoration={brace,amplitude=10pt}, xshift=0pt,yshift=0pt]
(2.8,3) -- (2.8,5.8) ;
\node at (1.6,4.5) {$J^x_{ z\,+\,\dbullet \,e_2}$};

\draw[darkgray, dotted, line width=1.2mm] (3,3) -- (4,3); 
\draw[gray, dotted, line width=1.2mm]  (4,3) -- (4,5) -- (7,5);
\fill[color=white] (3,3)circle(1.7mm);
\draw[ fill=white](3,3)circle(1mm);
\node at (2.7,2.7) {$z$};

\draw[ fill=white](0,0)circle(1mm);
\node at (-0.3,-0.3) {$x$};

\fill[color=white] (4,3)circle(1.7mm); 
\draw[ fill=lightgray](4,3)circle(1mm);
\node at (4.3,3.3) {$a$};

\fill[color=white] (7,5)circle(1.7mm);
\draw[ fill=lightgray](7,5)circle(1mm);
\node at (7.3,5.3) {$y$};
\end{tikzpicture}
\end{center}
\caption{\small Illustration of Lemma \ref{sec3lem2a}. LPP process  $ G^{(x)}_{z,\bullet}$ uses boundary weights defined  by the LPP process $G_{x,\bullet}$.  Path $x$-$a$-$y$ is the geodesic of  $G_{x,y}$ and path $z$-$a$-$y$ the geodesic of  $G^{(x)}_{z,y}$. These geodesics share the segment $a$-$y$.}
\label{sec3fig2}
\end{figure}

Fix distinct lattice points $x\le z$ and 
define a second LPP process $G^{(x)}_{z, \bbullet}$ with base point at $z$ that uses boundary weights given by the increments of $G_{x,\hspace{0.5pt}\bbullet}$, as illustrated in Figure \ref{sec3fig2}. Precisely,  for $y\ge z$, 
\beq\label{sec2G5}G^{(x)}_{z,y} = \max_{z_{\bbullet} \in \Pi^{z,y}} \sum_{k=0}^{|y-z|_1} \eta_{z_k}\eeq
where the weights are given by 
\be\label{eta4} \begin{aligned} 
\eta_z&=0,  
\qquad \eta_a=\w_a \quad\text{for }  a\in z+\Z_{>0}^2 \text{ (bulk), } \\
\eta_{z+ke_1}&= I^x_{z+ke_1}, \quad \eta_{z+ke_2}= J^x_{z+ke_1} \quad \text{for  } k\ge 1 \text{ (boundary). } 
\end{aligned}\ee
Proof of  the lemma below is elementary and   can be found in Lemma A.1 of \cite{CGMlecture}.

\begin{lemma} \label{sec3lem2a}   Let $x\le z$ and $y\in z+ \Z^2_{>0}$.  Then the  unique geodesics of  $G_{x,y}$ and $ G^{(x)}_{z,y}$ coincide in the quadrant $z+ \Z^2_{>0}$. 
\end{lemma}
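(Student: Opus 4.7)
The plan is to prove the pointwise identity
\[
G^{(x)}_{z,y}\;=\;G_{x,y}-G_{x,z}\qquad\text{for every }y\ge z,
\]
and then to use it to match up the two geodesics vertex by vertex inside the open quadrant $z+\Z^2_{>0}$.

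I would establish the identity by induction on $|y-z|_1$. The case $y=z$ holds since $G^{(x)}_{z,z}=\eta_z=0$. For $y=z+ke_1$ on the horizontal axis through $z$ the only up-right path from $z$ to $y$ is forced, and its $G^{(x)}$-value $\sum_{j=1}^k I^x_{z+je_1}$ telescopes to $G_{x,y}-G_{x,z}$ by the definition of $I^x$ in \eqref{eta4}; the vertical axis is symmetric. For $y\in z+\Z^2_{>0}$ the predecessors $y-e_1$ and $y-e_2$ both lie in $z+\Z^2_{\ge 0}$, and the LPP recursion reads
\[
G^{(x)}_{z,y}\;=\;\omega_y+\max\bigl(G^{(x)}_{z,y-e_1},\,G^{(x)}_{z,y-e_2}\bigr),
\]
because $\eta_y=\omega_y$ in the strict interior. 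Applying the inductive hypothesis to $y-e_1$ and $y-e_2$ and factoring out the common constant $G_{x,z}$ reduces this to the standard recursion $G_{x,y}=\omega_y+\max(G_{x,y-e_1},G_{x,y-e_2})$, completing the induction step.

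Once the identity is in hand, the two LPP processes differ by the fixed random quantity $G_{x,z}$ at every vertex $y\ge z$, so at every $y\in z+\Z^2_{>0}$ the $\arg\max$ over $\{y-e_1,\,y-e_2\}$ in the recursion is the same for both. This means the $G^{(x)}_{z,y}$-geodesic and the $G_{x,y}$-geodesic arrive at $y$ through the same incoming edge. Unwinding this one step at a time from $y$ backward through strict-interior vertices, the two geodesics traverse exactly the same sequence of points in $z+\Z^2_{>0}$ until a predecessor first lands on one of the two axes through $z$; beyond that point the strict-interior portion is exhausted and nothing more has to be compared.

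The only bookkeeping is the careful treatment of the axes in the base case of the induction and at the exit step, both of which reduce to telescoping identities using the definitions of $I^x$, $J^x$, and $\eta$. Almost sure uniqueness of the $\arg\max$ at every step, needed to convert equality of argmax sets into equality of geodesic edges, follows from the continuity of the exponential bulk weights $\omega$. No substantive obstacle is expected beyond this bookkeeping.
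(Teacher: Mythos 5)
Your proof is correct and is essentially the standard argument that the paper delegates to Lemma A.1 of \cite{CGMlecture}: one first establishes the additivity $G^{(x)}_{z,y}=G_{x,y}-G_{x,z}$ by induction (telescoping on the axes, the common recursion $\omega_y+\max(\cdot,\cdot)$ in the bulk), and then observes that the two backward argmax recursions agree at every vertex of $z+\Z^2_{>0}$, so the geodesics traced backward from $y$ coincide until they first exit the open quadrant (and cannot re-enter, since coordinates are monotone along an up-right path). No gaps.
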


\begin{figure}[t]
\captionsetup{width=0.8\textwidth}
\begin{center}

\begin{tikzpicture}[>=latex, scale=0.8]

\draw[line width = 1mm, color = white] (3,6)--(4,6);
			
		
			\foreach \x in { 1,...,5}{
				\foreach \y in {1,...,4}{
					\fill[color=white] (\x,\y)circle(1.8mm); 
					\draw[ fill=lightgray](\x,\y)circle(1.2mm);
					}}

			\foreach \x in { 0,...,5}{
				\foreach \y in {0}{
					 
					\draw[line width = 1mm, color = lightgray] (\x+0.1,\y)--(\x+0.9,\y);

				}
			}
			
			\foreach \x in { 0}{
				\foreach \y in {0,...,4}{
					 
					\draw[line width = 1mm, color = lightgray] (\x,\y+0.1)--(\x,\y+0.9);

				}
			}
			
\draw[line width = 1.2mm, color=gray, dotted](0,0)--(2,0)--(2,1)--(2,3)--(3,3)--(3,4)--(5,4);

			\fill[color=white] (0,0)circle(1.8mm); 
					\draw[ fill=white](0,0)circle(1.2mm);

		\node at (8.1, -0) {$I_{x+ke_1} \sim \Exp(1-\rho)$};
		\node at (-2, 3) {$J_{x+ke_2} \sim \Exp(\rho)$};
		\node at (6.5,3) {$\omega_z \sim \Exp(1)$};
	
	\node at (-0.4,0) {$x$};
		\node at (5.4,4) {$y$};
\node[fill=,regular polygon, regular polygon sides=3,inner sep=2pt] at (2,-0.1) {};

		\end{tikzpicture}
	\end{center}	
	\caption{\small Increment-stationary LPP with base point $x$.  If the dotted line were the  geodesic of $G^\rho_{x,y}$, then the black triangle highlights the exit point, and the exit time is  $\exittime^{x\,\rightarrow\,y} = 2$.}
	\label{fig1stay}
\end{figure}

\subsection{Stationary last-passage percolation}

The stationary LPP process $G^\rho$  is defined on a positive quadrant $x+\Z^2_{\geq 0}$ with a  fixed base point  $x\in \Z^2$.  It is parametrized by $\rho\in(0,1)$.  Start with mutually independent bulk weights $\{\omega_z: z\in x+\Z_{>0}^2\}$ and boundary weights  $\{ I_{x+ke_1}, J_{x+le_2}:  k,l\in \Z_{>0}\}$ 
with marginal distributions
\beq\label{sec2weights} \omega_z \sim \Exp(1), \quad I_{x+ke_1} \sim \Exp(1-\rho),\quad\text{and}\quad  J_{ x+le_2} \sim \Exp(\rho).\eeq
The probability distribution of these weights is denoted by $\mathbb{P}^\rho$. 
The LPP process $G^\rho_{x,\hspace{0.5pt}\bbullet}$ is defined on the boundary of the quadrant by 
  $G^\rho_{x,x}  = 0$, 
$ G^\rho_{x,x+ke_1}  =  \sum_{i=1}^k I_{x+ie_1}$ and $ G^\rho_{x,x+le_2}  = \sum_{j=1}^l J_{x+je_2}$  for $k,l\ge 1$.  
In the bulk we perform LPP that  uses both the boundary and the  bulk weights:  for 
$y = x + (m,n)\in x+\Z_{>0}^2$, 
\begin{align}\label{sec2G^rho}
G^\rho_{x,y} = \max_{1\leq k \leq m} \biggl\{ \biggl(\;\sum_{i=1}^k I_{x+ie_1} \biggr)+ G_{x+ke_1+e_2, y}\biggr\}  
 \bigvee \max_{1\leq l \leq n} \biggl\{ \biggl( \;\sum_{j=1}^l J_{x+je_2} \biggr)+ G_{x+le_2+e_1, y}\biggr\}.
\end{align}
The LPP value $G_{a,b}$ inside the braces is the standard one defined by \eqref{sec2G} with the i.i.d.\ bulk weights $\w$. 
Call the almost surely unique maximizing path a \textit{$\rho$-geodesic}.   
The \textit{exit time} $\exittime^{x\,\to \,y}$ is the $\Z\setminus\{0\}$-valued random variable that records where the $\rho$-geodesic from $x$ to $y$ exits the boundary, relative to the base point $x$,  with a sign that indicates choice between the axes:
\begin{align}\label{sec2G^rho1}
G^\rho_{x,y} =  \begin{cases} \sum_{i=1}^{k}  I_{x+ie_1} + G_{x+ke_1+e_2, y}, &\text{ if } \exittime^{x\,\to \,y}=k>0 \\[4pt] 
\sum_{j=1}^{l} J_{x+je_2} + G_{x+le_2+e_1, y}, &\text{ if } \exittime^{x\,\to \,y}=-l<0.  
\end{cases} 
\end{align}
See Figure \ref{fig1stay} for an illustration.

Define  horizontal and vertical increments of $G^\rho_{x,\hspace{0.5pt}\bbullet}$   as
\beq \label{uselessdef} I^x_{a} = G^\rho_{x,a} - G^\rho_{x,a-e_1}
\quad\text{and}\quad
J^x_{ b} = G^\rho_{x,b} - G^\rho_{x,b-e_2}\eeq
for   $a \in x+\Z_{>0}\times\Z_{\geq 0}$ and  $b \in x+\Z^2_{\geq 0}\times\Z_{>0}$. 
The definition above implies $I^x_{ke_1}=I_{ke_1}$ and $J^x_{le_2}=J_{le_2}$ for $k,l\ge 1$.  The term ({\it increment}) {\it stationary} LPP is justified by the next fact. Its proof is an induction argument and can be found for example in \cite[Thm.~3.1]{CGMlecture}. 

\begin{lemma}\label{Grho-lm5}    Let $\{y_i\}$ be any finite or infinite down-right path in $x+\Z_{\ge0}^2$. That is,  $(y_{i+1}-y_i)\cdot e_2\le 0\le (y_{i+1}-y_i)\cdot e_1$.   Then the increments $\{ G^\rho_{x, y_{i+1}}-G^\rho_{x, y_i}\}$ are independent.  The marginal distributions of nearest-neighbor increments are $I^x_{a} \sim {\rm Exp}(1-\rho)$ and $J^x_{b} \sim {\rm Exp}(\rho)$.
\end{lemma}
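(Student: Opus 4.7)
The plan is an induction over down-right paths driven by a classical Burke-type identity at a single unit cell. Since any joint distributional statement involves only finitely many coordinates, it suffices to treat finite down-right paths; the infinite case then follows by consistency of finite-dimensional marginals.

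For the base case, fix the ``axis corner path'' $\gamma_0$ that runs down along $x+\Z_{\geq 0}e_2$ from some far point to $x$ and then right along $x+\Z_{\geq 0}e_1$ to some far point. Along $\gamma_0$ each signed path increment of $G^\rho_{x,\cdot}$ is either $-J_{x+ke_2}$ on the descending portion or $I_{x+ke_1}$ on the rightward portion, and these are independent with the claimed marginals by the very construction of $\mathbb{P}^\rho$. I would carry the stronger induction hypothesis that at every intermediate stage the path increments are independent with the stated marginals \emph{and} are jointly independent of every bulk weight $\omega_z$ at a vertex $z$ not yet consumed by a flip.

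For the induction step, note that any finite down-right path $\gamma$ (extended if necessary by horizontal and vertical segments so its endpoints lie on the axes) can be reached from $\gamma_0$ by finitely many ``inner corner flips.'' A flip at a vertex $y_i=(a,b)$ with $y_{i-1}=y_i+e_2$ and $y_{i+1}=y_i+e_1$ replaces $y_i$ by $y_i+e_1+e_2$, still yielding a down-right path and strictly decreasing by one the lattice area between the current path and $\gamma$; a routine combinatorial argument shows the process terminates at $\gamma$. The LPP recursion
\[
G^\rho_{x,\,y_i+e_1+e_2}=\max\bigl(G^\rho_{x,\,y_i+e_1},\,G^\rho_{x,\,y_i+e_2}\bigr)+\omega_{y_i+e_1+e_2}
\]
expresses the two new path increments at the flipped corner as
\[
I' \;=\; \omega+(I-J)^+, \qquad J' \;=\; \omega+(J-I)^+,
\]
where $I,J$ are the two old path increments at the corner and $\omega=\omega_{y_i+e_1+e_2}$ is the freshly consumed bulk weight. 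By the induction hypothesis these three variables are independent and jointly independent of everything else remaining in play.

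The substantive probabilistic content is then the Burke identity, which I would prove by a direct joint-density computation (or more elegantly by observing that $\min(I,J)\sim\Exp(1)$ is independent of $\mathrm{sign}(I-J)$ and invoking memorylessness): if $I\sim\Exp(1-\rho)$, $J\sim\Exp(\rho)$, and $\omega\sim\Exp(1)$ are independent, then the pair $(I',J')$ consists of independent random variables with $I'\sim\Exp(1-\rho)$ and $J'\sim\Exp(\rho)$. Because $(I',J')$ is measurable with respect to $(I,J,\omega)$ alone, and this triple was independent of everything else by hypothesis, the stronger induction hypothesis is preserved by each flip. Iterating through all needed flips produces the desired joint law along the extended path, and marginalizing onto $\gamma$ completes the argument. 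The only real obstacle is the Burke identity itself; the rest is combinatorial bookkeeping that needs care only in tracking which bulk weights remain unused across the sequence of flips.
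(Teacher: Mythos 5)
Your proof is correct and is essentially the argument the paper points to: it cites Lemma \ref{Grho-lm5} as "an induction argument" from the lecture notes \cite{CGMlecture}, which is exactly this corner-flip induction with the single-cell Burke identity $(I',J')=(\omega+(I-J)^+,\;\omega+(J-I)^+)\overset{d}{=}(I,J)$ as the only probabilistic input. Your strengthened induction hypothesis (joint independence from the not-yet-consumed bulk weights) is precisely the bookkeeping needed to make the flip step legitimate, so nothing is missing.
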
 

Now apply Lemma \ref{sec3lem2a} to this stationary situation.  Take $z\in x+\Z^2_{\geq 0}$ and define  the LPP process  $G^{(x),\rho}_{z,\,\bbullet}$ with the recipe \eqref{sec2G5} where the boundary weights   are the ones  in \eqref{uselessdef}.  By Lemma \ref{Grho-lm5}, these boundary weights have the same distribution as the original ones in \eqref{sec2weights}. Consequently $G^{(x),\rho}_{z,\bbullet}$ is another stationary LPP process. 
Lemma \ref{sec3lem2a} gives the statement below which will be used extensively in our proofs.


\begin{lemma} \label{sec3lem2}   Let $x\le z$ and $y\in z+ \Z^2_{>0}$.  Then the  unique geodesics of  $G^\rho_{x,y}$ and $ G^{(x),\rho}_{z,y}$ coincide in the quadrant $z+ \Z^2_{>0}$. 
\end{lemma}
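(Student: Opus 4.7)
The plan is to reduce this directly to the already-established Lemma \ref{sec3lem2a}, which is a purely deterministic assertion about LPP once one has almost sure uniqueness of the maximizer.

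The key observation is that $G^\rho_{x,\bullet}$ can itself be interpreted as an \emph{ordinary} LPP process on the quadrant $x + \Z^2_{\geq 0}$, by absorbing the boundary weights into the weight array. Explicitly, define a modified weight configuration $\tilde{\omega}$ on $x + \Z^2_{\geq 0}$ by
\[
\tilde{\omega}_x = 0, \qquad \tilde{\omega}_{x+ke_1} = I_{x+ke_1}, \qquad \tilde{\omega}_{x+le_2} = J_{x+le_2} \quad (k,l \geq 1), \qquad \tilde{\omega}_z = \omega_z \quad (z \in x + \Z^2_{>0}).
\]
Unpacking definition \eqref{sec2G^rho} (every up-right path from $x$ to $y$ traverses a unique initial segment along one axis before entering the bulk) yields
\[
G^\rho_{x,y} = \max_{z_\bullet \in \Pi^{x,y}} \sum_{k=0}^{|y-x|_1} \tilde{\omega}_{z_k},
\]
so $G^\rho_{x,y}$ is exactly the last-passage value for the weights $\tilde{\omega}$. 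Since the $\tilde{\omega}$ variables all have continuous distributions, the $\tilde{\omega}$-geodesic is almost surely unique.

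Now apply Lemma \ref{sec3lem2a} verbatim to the weight configuration $\tilde{\omega}$. By \eqref{uselessdef} the increments $I^x_a, J^x_b$ are precisely the increments of this ordinary LPP process from base point $x$. Therefore the process $G^{(x),\rho}_{z,\bullet}$, defined in the statement via \eqref{sec2G5} with those increments as boundary weights and $\omega_z$ as bulk weights on $z + \Z^2_{>0}$, coincides with the auxiliary process $G^{(x)}_{z,\bullet}$ of \eqref{eta4} associated with $\tilde{\omega}$ (noting that the bulk weights $\omega_z$ and $\tilde{\omega}_z$ agree on $z + \Z^2_{>0}$ because $z \geq x$). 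Lemma \ref{sec3lem2a} then delivers the conclusion: the unique geodesics of $G^\rho_{x,y}$ and $G^{(x),\rho}_{z,y}$ coincide in the quadrant $z + \Z^2_{>0}$.

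There is no genuine obstacle in this argument; the only point requiring care is the bookkeeping that the hybrid boundary/bulk construction of $G^\rho$ fits the standard LPP framework, so that the deterministic content of Lemma \ref{sec3lem2a} transfers without change.
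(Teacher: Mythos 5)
Your proof is correct and takes essentially the same route as the paper: the paper simply applies Lemma \ref{sec3lem2a} to the stationary situation, and your step of absorbing the boundary weights into a modified weight array $\tilde\omega$ (together with the check that the bulk weights agree on $z+\Z^2_{>0}$) is exactly the bookkeeping that justifies that one-line application. No gap.
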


 Since the boundary weights in \eqref{sec2weights}  are stochastically larger than the bulk weights, the $\rho$-geodesic prefers the boundaries. The \textit{characteristic direction} $\xi[\rho] = ((1-\rho)^2, \rho^2)$ defined earlier in \eqref{char1}  is the unique direction in which the attraction of the $e_1$- and $e_2$-axes      balance each other  out.   A consequence of this is that the $\rho$-geodesic from $x$ to $x+v_N$  spends order  $N^{2/3}$  steps on the boundary.  Here we encounter the $2/3$ wandering exponent of KPZ universality.   This is described in Theorems \ref{t:exit1} and \ref{t:small-ub} below.  The macroscopic picture is   in Figure \ref{chardir}.   This matter is discussed more thoroughly in Section 3.2 of \cite{CGMlecture}.   We record the upper bound for this exit time recently derived in \cite{rtail1}.

\begin{figure}[t]
\captionsetup{width=0.8\textwidth}
\begin{center}
\begin{tikzpicture}[scale = 0.8]

\draw[gray, line width=0.3mm, ->] (0,0) -- (5,0);
\draw[gray, line width=0.3mm, ->] (0,0) -- (0,4);

\draw[dotted, line width=1mm] (0,0) -- (0.6*5 , 0.6*4);

\fill[color=white] (0.6*5 ,0.6*4)circle(1.7mm); 
\draw[ fill=lightgray](0.6*5, 0.6*4)circle(1mm);

\fill[color=white] (0,0)circle(1.7mm); 
\draw[ fill=lightgray](0,0)circle(1mm);

\draw[gray, dotted, line width=0.8mm] (0,0) -- (1.4,0) -- (0.6*5+1.4 , 0.6*4);

\draw[gray, dotted, line width=0.8mm] (0,0) -- (0,1) -- (2.5,2+1);

\draw[gray ,dotted, line width=0.3mm, ->] (0,0) -- (5,4) ;

\node at (5, -0.4) {$\Exp(1-\rho)$};
\node at (-0.8, 4) {$\Exp(\rho)$};

\node at (5+0.4, 4-0.3) {$\xi[\rho]$};

\node at (0-0.7, -0.1) {$(0,0)$};

\fill[color=white] (0.6*5+1.4 ,0.6*4)circle(1.7mm); 

\draw[ fill=lightgray](0.6*5+1.4, 0.6*4)circle(1mm);

\fill[color=white] (2.5,2+1)circle(1.7mm); 
\draw[ fill=lightgray](2.5,2+1)circle(1mm);

\fill[color=white] (0,0)circle(1.7mm); 
\draw[ fill=white](0,0)circle(1mm);

\end{tikzpicture}
\end{center}
\caption{\small A macroscopic view of  point-to-point geodesics (dotted lines) in stationary LPP  from the base point at the origin $(0,0)$  to three different endpoints (gray bullets).  Only the geodesic in the characteristic direction $\xi[\rho]$ spends no macroscopic time on the boundary.}
	\label{chardir}
\end{figure}

\begin{theorem}\label{t:exit1}   {\rm\cite[Theorem 2.5]{rtail1}}
  There exist positive constants $r_0, N_0$, $C$ that depend only on $\rho$ such that for all $r>r_0$, $N\geq N_0$,   and  $|v- v_N|_1\leq N^{2/3}$, 
$$\mathbb{P}^\rho \big\{|\exittime^{\,0\,\rightarrow \,v}|  \geq rN^{2/3}\big\} \leq e^{-Cr^3}.$$
\end{theorem}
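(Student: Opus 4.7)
By the diagonal reflection symmetry of the CGM, which swaps the two coordinate axes and interchanges the roles of $\rho$ and $1-\rho$, it suffices to bound the one-sided upper tail
$$\mathbb{P}^\rho\bigl\{\exittime^{\,0\,\rightarrow \,v} \geq M\bigr\}, \qquad M = \fl{rN^{2/3}}.$$
The plan is a Radon--Nikodym change of measure to a stationary LPP with a shifted parameter $\rho' = \rho - c\hspace{0.5pt} r N^{-1/3}$, where $c = c(\rho) > 0$ is a small constant chosen so that the shifted characteristic direction $\xi[\rho']$ lies above the ray from $0$ to $v$ by a signed offset of order $rN^{2/3}$. The point is that under $\mathbb{P}^{\rho'}$ the horizontal axis is less attractive and the typical $\rho'$-geodesic exits on the vertical axis, so $\{\exittime^{\,0\,\rightarrow \,v}\geq M\}$ becomes a large-deviation event.

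Concretely, since the bulk weights are identical under the two laws, the Radon--Nikodym derivative $\mathcal{L}$ is a product over the boundary increments:
$$\mathcal{L} \;=\; \prod_{i=1}^{K_1}\frac{(1-\rho)e^{-(1-\rho)I_{ie_1}}}{(1-\rho')e^{-(1-\rho')I_{ie_1}}}\cdot \prod_{j=1}^{K_2}\frac{\rho \, e^{-\rho J_{je_2}}}{\rho'\,e^{-\rho' J_{je_2}}},$$
where $(K_1,K_2)$ is a truncation level to be chosen. Cauchy--Schwarz gives
$$\mathbb{P}^\rho\{\exittime^{\,0\,\rightarrow \,v}\geq M\} \;\leq\; \mathbb{E}^{\rho'}[\mathcal{L}^{2}]^{1/2}\cdot\mathbb{P}^{\rho'}\{\exittime^{\,0\,\rightarrow \,v}\geq M\}^{1/2}.$$
The moment bound on product-form exponential Radon--Nikodym derivatives announced in the appendix yields $\mathbb{E}^{\rho'}[\mathcal{L}^2]^{1/2} \leq \exp\bigl(C_1 (K_1+K_2)(\rho-\rho')^2\bigr)$. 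Taking $K_1,K_2$ of order $rN^{2/3}$ makes this of order $e^{C_1 r^{3}}$. For the probability in the shifted system, the event forces the horizontal boundary segment of length $\geq M$ to beat the favored vertical-exit alternative. Using the independence of the boundary increments (Lemma \ref{Grho-lm5}), rewriting the event through \eqref{sec2G^rho}, and applying a Cram\'er/Chernoff estimate on $\sum_{i=1}^M I_{ie_1}$ versus the expected vertical boundary contribution (the random-walk estimate from the appendix) gives $\mathbb{P}^{\rho'}\{\exittime^{\,0\,\rightarrow \,v}\geq M\} \leq e^{-C_2 r^{3}}$, with $C_2$ proportional to the third derivative of the cumulant generating function of $I_1$ in the shift parameter. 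Choosing $c$ so small that $C_2/2 > C_1$ yields the required $e^{-Cr^{3}}$.

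\textbf{Main obstacle.} The key difficulty is the localization: the event $\{\exittime^{\,0\,\rightarrow \,v}\geq M\}$ a priori depends on all $v_1 = O(N)$ horizontal and $v_2 = O(N)$ vertical boundary weights, and a truncation at $K_i = O(N)$ blows the $L^2$ cost up to $e^{C r^{2} N^{1/3}}$, much worse than $e^{Cr^{3}}$. To truncate at the correct level $O(rN^{2/3})$, one must argue, using planar monotonicity (Lemma \ref{sec3lem1}) and the independence of increments along a down-right path, that on the event in question the $\rho$-geodesic exits at a position of order $r N^{2/3}$ rather than far larger; this is effectively a weaker version of the very bound being proved. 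Breaking this apparent circularity --- most likely by a bootstrap in which a crude polynomial tail (from a Chebyshev estimate using the Cator--Groeneboom--type variance identity for $G^\rho_{0,v}$) is used to justify the localization, which then upgrades to the sharp cubic tail --- is where I expect the real work of the proof to lie.
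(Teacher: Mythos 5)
First, a point of comparison: the paper does not prove this statement at all — Theorem \ref{t:exit1} is imported verbatim from \cite{rtail1} — so your argument has to stand on its own, and it does not. The decisive gap is your treatment of $\mathbb{P}^{\rho'}\{\exittime^{\,0\to v}\ge M\}$. By \eqref{sec2G^rho}, the event $\{\exittime^{\,0\to v}\ge M\}$ is not a statement about boundary weights alone: it asserts that $\max_{k\ge M}\{\sum_{i\le k}I_{ie_1}+G_{ke_1+e_2,v}\}$ beats the vertical alternative, and the bulk terms $G_{ke_1+e_2,v}$ fluctuate on scale $N^{1/3}$. Tilting by $\rho'=\rho-crN^{-1/3}$ creates a deterministic deficit of only order $r^2N^{1/3}$ for the horizontal-exit alternative, so to get $\mathbb{P}^{\rho'}\{\cdot\}\le e^{-C_2r^3}$ you need an upper-tail moderate-deviation bound of the form $\mathbb{P}\{G_{x,y}\ge \mathbb{E}G_{x,y}+uN^{1/3}\}\le e^{-cu^{3/2}}$ at $u=r^2$, together with matching control of the competing term. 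A Cram\'er bound on $\sum_{i=1}^M I_{ie_1}$ does contribute an $e^{-cr^3}$ for the boundary sum, but says nothing about the bulk, and Lemma \ref{rw} (a persistence estimate for a random walk staying negative) is not the relevant tool. That moderate-deviation estimate is precisely the hard analytic content of \cite{rtail1}, where it is extracted from an explicit formula for the moment generating function of the stationary last-passage value; without it, your bound on the tilted probability is essentially the theorem itself with $\rho$ replaced by $\rho'$ and the direction made off-characteristic — compare Corollary \ref{sec3cor}, which states exactly such a bound and is deduced \emph{from} Theorem \ref{t:exit1}, not the other way around.

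Second, the localization problem you flag as the main obstacle is real and your bootstrap does not close it. A Chebyshev/variance a priori bound yields only $\mathbb{P}^\rho\{\exittime\ge sN^{2/3}\}\le Cs^{-3}$; decomposing over the exit location $k$ and truncating the Radon--Nikodym derivative at level $k$ incurs a cost $e^{C_1c^2r^2kN^{-2/3}}$ that grows with $k$, so summing over $k$ requires the tilted probabilities $\mathbb{P}^{\rho'}\{\exittime=k\}$ to decay at least as fast — again the statement being proved. It is worth noting that the tilt-plus-Cauchy--Schwarz architecture is exactly what this paper uses in the proof of Theorem \ref{t:large-ub}, but there it runs in the cheap direction: one only needs the tilted event to have probability at least $1/2$ to conclude a \emph{lower} bound $e^{-Cr^3}$ under $\mathbb{P}^\rho$. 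For an upper bound the Cauchy--Schwarz inequality points the wrong way, and the method yields nothing unless the tilted probability is already superexponentially small in $r^3$, which is the original problem in disguise.
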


In the next  corollary the $\Theta(N^{2/3})$  deviation is transferred from the base point $0$ to the endpoint $v_N$.   Figure \ref{sec3fig4} illustrates how Lemma \ref{sec3lem2} reduces claim  \eqref{weprove2}   to  Theorem \ref{t:exit1}.  
(Corollary \ref{sec3cor} is proved using the same method as Corollary 5.10 in the arXiv version of  \cite{CGMlecture}.) 


\begin{corollary}\label{sec3cor}
There exist positive constants $N_0$, $C$ that depend only on $\rho$ such that for $N \geq N_0$  and $b>0$,
\begin{align}
 \label{weprove}&\mathbb{P}^\rho \big\{\exittime^{\,0\,\rightarrow  \, v_N+ \floor{bN^{2/3}}e_1} \leq - 1 \big\} \leq e^{-Cb^3} \qquad\text{and} \\
\label{weprove2}&\mathbb{P}^\rho \big\{\exittime^{\,0\,\rightarrow  \, v_N- \floor{bN^{2/3}}e_1} \geq 1\big\} \leq e^{-Cb^3}.
 \end{align}
\end{corollary}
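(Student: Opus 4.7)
The plan is to reduce each part of Corollary \ref{sec3cor} to the exit-time bound of Theorem \ref{t:exit1} by introducing an auxiliary stationary LPP with a shifted base point, so that the target $y$ returns to the characteristic direction of that base; Lemma \ref{sec3lem2} then translates exit-time events between the shifted process and a derived LPP based at $0$.

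For \eqref{weprove2} let $y = v_N - \lfloor bN^{2/3}\rfloor e_1$ and set $x = -\lfloor bN^{2/3}\rfloor e_1$, so that $y - x = v_N$ exactly. I would augment the probability space with a stationary LPP $\wh G^\rho_{x,\bbullet}$, using $\omega$ as bulk weights on $\Z_{>0}^2$ extended by fresh independent $\Exp(1)$ variables on the added strip $\{-\lfloor bN^{2/3}\rfloor < a_1 \le 0,\ a_2>0\}$, plus fresh $\Exp(1-\rho)$, $\Exp(\rho)$ boundary weights on the two axes through $x$. By Lemma \ref{Grho-lm5} the axis increments $I^x_{ke_1}$, $J^x_{ke_2}$ of $\wh G^\rho$ are independent exponentials of the correct rates, so feeding them into \eqref{eta4} yields a derived stationary LPP $G^{(x),\rho}_{0,\bbullet}$ equal in distribution to $G^\rho_{0,\bbullet}$, whose geodesic to $y$ agrees with the $\wh G^\rho$-geodesic inside $\Z_{>0}^2$ by Lemma \ref{sec3lem2}.

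The key step is the event identity
\[
\bigl\{\exittime_{G^{(x),\rho}}^{0\to y} \ge 1\bigr\} \;=\; \bigl\{\exittime_{\wh G}^{x\to y} > \lfloor bN^{2/3}\rfloor\bigr\},
\]
which I would verify by a case split on how the $\wh G^\rho$-geodesic first enters $\Z_{>0}^2$. If $\wh G^\rho$ exits its $e_1$-axis with $\exittime_{\wh G}^{x\to y}=k$, the boundary segment runs along the $e_1$-axis to $x+ke_1=(k-\lfloor bN^{2/3}\rfloor, 0)$ and turns upward, entering $\Z_{>0}^2$ from below at $(k-\lfloor bN^{2/3}\rfloor, 1)$ exactly when $k > \lfloor bN^{2/3}\rfloor$. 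Every other exit pattern (small positive $k$ or an $e_2$-exit) forces the $\wh G^\rho$-geodesic to cross into $\Z_{>0}^2$ through some $(1,m)$ with $m\ge 1$, i.e., from the left. By Lemma \ref{sec3lem2} the derived $G^{(x),\rho}$-geodesic inherits the same entry point, which determines the sign of $\exittime_{G^{(x),\rho}}^{0\to y}$. Theorem \ref{t:exit1} applied to $\wh G^\rho$ (the hypothesis $|y-(x+v_N)|_1=0\le N^{2/3}$ is satisfied) then yields $\P(\exittime_{\wh G}^{x\to y} > \lfloor bN^{2/3}\rfloor) \le e^{-Cb^3}$ for $b \ge r_0$, and the distributional equality $G^{(x),\rho}\stackrel{d}{=}G^\rho$ delivers \eqref{weprove2}. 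The regime $0<b<r_0$ is absorbed by choosing $C$ small enough that the bound is trivial.

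Part \eqref{weprove} is the mirror construction: take $x = -\lfloor c b N^{2/3}\rfloor e_2$ with $c = \rho^2/(1-\rho)^2$, chosen so that $y - x$ lies (up to flooring) in the characteristic direction at the slightly larger scale $M = N + bN^{2/3}/(1-\rho)^2$. The same coupling and the symmetric case analysis produce $\bigl\{\exittime_{G^{(x),\rho}}^{0\to y} \le -1\bigr\} = \bigl\{\exittime_{\wh G}^{x\to y} < -\lfloor cbN^{2/3}\rfloor\bigr\}$, and Theorem \ref{t:exit1} applied to $\wh G^\rho$ at scale $M$ gives the $e^{-Cb^3}$ bound (using $M \asymp N$ whenever $bN^{-1/3}$ is bounded, with trivial bounds absorbing the remaining parameter range). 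The main obstacle throughout is the bookkeeping for the event identity: one must carefully trace the monotone up-right path from the shifted base as it crosses into the original positive quadrant and verify that each of the four qualitative $\wh G^\rho$-exit patterns projects to the correct sign of the derived exit time.
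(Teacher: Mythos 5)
Your argument is essentially the paper's own proof: the paper establishes \eqref{weprove2} precisely by placing a second base point at $-\floor{bN^{2/3}}e_1$ so that the displacement to the target is the characteristic vector $v_N$, using the nested-process coupling of Lemma \ref{sec3lem2} to identify $\{\exittime^{\,0\to y}\ge 1\}$ with the event that the shifted geodesic exits beyond $\floor{bN^{2/3}}$, and then applying Theorem \ref{t:exit1} (this is exactly Figure \ref{sec3fig4}), and your $e_2$-axis shift by $\rho^2(1-\rho)^{-2}bN^{2/3}$ for \eqref{weprove} is the natural symmetric counterpart that the paper leaves implicit. One shared caveat: the regime of very large $b$ in \eqref{weprove} is not ``trivially absorbed'' as you assert --- for $b\gg N^{1/3}$ your effective $r$ scales like $b^{1/3}N^{2/9}$ rather than $b$, so the method yields only $e^{-CbN^{2/3}}$ --- but this range is irrelevant to every use of the corollary and is glossed over by the paper as well.
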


\begin{figure}[t]
\captionsetup{width=0.8\textwidth}
\begin{center}
 
\begin{tikzpicture}[>=latex, scale=1]

\draw[line width = 0.3mm, ->] (1,0)--(1,4);
\draw[line width = 0.3mm, ->] (1,0)--(5,0);

\draw[dotted, color=gray, line width = 1mm] (1,0)--(8.3-7,0) -- (8.3-7,0.5) --(9.5-7, 0.5) -- (9.5-7,2);

\fill[color=white] (2.5,2)circle(1.4mm);

\draw[ line width = 0.3mm, ->] (6.5,0)--(6.5,4);
\draw[line width = 0.3mm] (6.5,0)--(8,0);
\draw[line width = 0.8mm, ->] (8,0)--(12,0);
\draw[line width = 0.8mm, ->] (8,0)--(8,4);

\node at (5.4,2.8) {\small $\xi[\rho]$};

\draw[dotted, color=gray, line width = 1mm] (6.5,0)--(8.3,0) -- (8.3,0.5) --(9.5, 0.5) -- (9.5,2);

\draw[loosely dotted, line width = 0.5mm, ->] (1,0)--(5,2+2/3);

\draw[loosely dotted, line width = 0.5mm, ->] (6.5,0)--(6.5+0.8*5.5, 0.8*3.5);
\node at (6.5+0.8*5.9, 0.8*3.9) {\small $\xi[\rho]$};
\fill[color=white] (2.5,2)circle(1.7mm);
\draw[ fill=lightgray](2.5,2)circle(1mm);

\draw [decorate,decoration={brace,amplitude=10pt}, xshift=0pt,yshift=0pt]
(2.5,2+0.2) -- (4,2+0.2) ;

\node at (3.25+0.2,2.8) {\small $bN^{2/3}$};

\draw[ fill=lightgray](4,2)circle(1mm);
\node at (4.4,1.8) {\small $v_N$};

\draw[ fill=lightgray](1,0)circle(1mm);

\draw[ fill=lightgray](6.5,0)circle(1mm);

\draw[ fill=lightgray](11,2)circle(1mm);

\draw[ fill=lightgray](8,0)circle(1mm);



\draw [decorate,decoration={brace,amplitude=10pt, mirror}, xshift=0pt,yshift=0pt]
(6.5+3,-0.2+2) -- (8+3,-0.2+2) ;

\node at (7.5+3,-0.7+2) {\small $bN^{2/3}$};

\fill[color=white] (9.5,2)circle(1.7mm);
\draw[ fill=lightgray](9.5,2)circle(1mm);

\node at (0.4,0) {\small $(0,0)$};

\node at (8,-0.4) {\small $(0,0)$};

\node at (6.5,-0.4) {\small $-bN^{2/3}e_1$};

\end{tikzpicture}
 
\end{center}
\caption{\small  Proof of  \eqref{weprove2}. On the left the event $\exittime^{\,0\,\rightarrow  \, v_N- \floor{bN^{2/3}}e_1} \geq 1$.    On the right a second base point is placed at  $- \floor{bN^{2/3}}e_1$ and the increment variables on the $e_2$-axis based at $0$ are determined by the LPP process based at $- \floor{bN^{2/3}}e_1$. By Lemma  \ref{sec3lem2},  $\exittime^{\,0\,\rightarrow  \, v_N- \floor{bN^{2/3}}e_1} \geq 1$ iff $\exittime^{\,- \floor{bN^{2/3}}e_1\,\rightarrow  \, v_N- \floor{bN^{2/3}}e_1} \geq bN^{2/3}$. This last event has probability $\le e^{-Cb^{-3}}$ by Theorem \ref{t:exit1}.}
\label{sec3fig4}
\end{figure}


\subsection{Busemann functions and semi-infinite geodesics}\label{s:buse}

The key to our results is that the directed semi-infinite geodesics can be defined through Busemann functions, which themselves are instances of stationary LPP. Thus estimates proved for stationary LPP provide information about the behavior of directed semi-infinite geodesics.  

The next theorem summarizes the properties of Busemann functions needed.  It is a combination of results from Section 4 of \cite{CGMlecture} and Lemma 4.1 of \cite{coalnew}.  
The dual weights introduced in part (iii) below are connected with {\it dual geodesics} which will be constructed later in Section 5. 

\begin{theorem}\label{t:buse}   Fix $\rho\in(0,1)$.  Then on the probability space of the i.i.d.\ ${\rm Exp}(1)$ weights $\{\w_z\}_{z\in\Z^2}$ there exists a process $\{B^\rho_{x,y}\}_{x,y\in\Z^2}$ with the following properties. 
\begin{enumerate} [{\rm(i)}] 

\item  With probability one,  $  \forall x,y \in \Z^2$, 
$$B^\rho_{x,y} = \lim_{N\rightarrow \infty} \bigl( G_{x, u_N} - G_{y, u_N}\bigr) 
$$
for any sequence $u_N$ such that $\abs{u_N}\to\infty$ and  $u_N/|u_N|_1 \rightarrow \xi[\rho]/|\xi[\rho]|_1$ as $N\rightarrow \infty$.

\item The unique $\xi[\rho]$-directed semi-infinite geodesic from $x$ is defined by $\bgeod{\,\rho}{x}_0=x$ and for $k\ge 0$,  
\beq\label{busegeo}  
\bgeod{\,\rho}{x}_{k+1}=\begin{cases}   \bgeod{\,\rho}{x}_{k} + e_1, &\text{if } \ B^\rho_{\bgeod{\,\rho}{x}_{k},\,\bgeod{\,\rho}{x}_{k} + e_1} \le  B^\rho_{\bgeod{\,\rho}{x}_{k},\,\bgeod{\,\rho}{x}_{k} + e_2}
\\[5pt]  
 \bgeod{\,\rho}{x}_{k} + e_2, &\text{if } \   B^\rho_{\bgeod{\,\rho}{x}_{k},\,\bgeod{\,\rho}{x}_{k} + e_2} <  B^\rho_{\bgeod{\,\rho}{x}_{k},\,\bgeod{\,\rho}{x}_{k} + e_1}. 
\end{cases} 
  \eeq
\item  Define the  dual weights by 
$$\text{$\widecheck{\omega}^\rho_z = B^\rho_{z-e_1,z}\wedge B^\rho_{z-e_2,z}$ \ \ \ for $z\in \Z^2$.} $$ 
 Fix a  bi-infinite nearest-neighbor  down-right path $\gamma = \{x_i\}_{i\in\Z}$ on $\Z^2$.   
 This means that $x_{i+1}-x_i\in\{e_1,  -e_2\}$.  Then the random variables 
 \begin{align*}  &\{ B^\rho_{x_{i},x_{i+1}}: i\in\Z\}, \ 
  \{\omega_y: 
\text{$y\in\Z^2$ lies strictly to the left of and below $\gamma$} \}, \\
&\qquad \text{and} \quad 
 \{\widecheck\w^\rho_z: 
\text{$z\in\Z^2$ lies strictly to the right of and above $\gamma$} \}
\end{align*} 
  are all mutually independent with  marginal distributions 
\be\label{buse78}  B^\rho_{x, x+e_1}\sim {\rm Exp}(1-\rho),
\quad  
B^\rho_{x, x+e_2}\sim {\rm Exp}(\rho) \quad\text{and}\quad \omega_y, \;\widecheck{\omega}^\rho_z\sim {\rm Exp}(1).
\ee

\end{enumerate} 

\end{theorem}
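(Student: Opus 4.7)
\textbf{Proof proposal for Theorem \ref{t:buse}.} The plan is to build the Busemann process $B^\rho$ from the stationary LPP process $G^\rho$ and then transfer the stationarity already established in Lemma~\ref{Grho-lm5} to the three claims of the theorem. The single unifying tool is Lemma~\ref{sec3lem2}: inside any forward quadrant $z+\Z_{>0}^{2}$, the geodesics of the stationary process $G^{(a),\rho}_{z,\bullet}$ based at $a\le z$ coincide with those of the i.i.d.\ model. This allows transplanting stationary-model identities and distributions into the i.i.d.\ model.

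For (i), I would fix $x,y\in\Z^2$ and a sequence $u_N$ with $u_N/\abs{u_N}_1\to\xi[\rho]/\abs{\xi[\rho]}_1$. Choose a base point $a\le x\wedge y$ well behind and consider the stationary process $G^\rho_{a,\bullet}$; by the cocycle identity one has
\[
G^\rho_{a,u_N}-G^\rho_{a,x}=G^{(a),\rho}_{x,u_N}
\quad\text{and analogously for $y$.}
\]
On the event that the $\rho$-geodesic from $a$ to $u_N$ enters the interior before reaching $x$ and $y$, Lemma~\ref{sec3lem2} gives $G^{(a),\rho}_{x,u_N}=G_{x,u_N}+C_x$ and similarly for $y$, where $C_x,C_y$ are random constants depending only on boundary weights along the respective boundaries. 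Subtracting,
\[
G_{x,u_N}-G_{y,u_N}=\bigl(G^\rho_{a,y}-G^\rho_{a,x}\bigr)+\text{(error)},
\]
and the error is supported on the event that the exit time is atypical. Theorem~\ref{t:exit1} makes this event summable in $N$ (along a dyadic subsequence), and Borel--Cantelli plus planar monotonicity (Lemma~\ref{sec3lem1}) upgrade convergence to hold simultaneously for all $x,y$ and all sequences $u_N$ converging in direction $\xi[\rho]$. The limiting random variable $B^\rho_{x,y}:=G^\rho_{a,y}-G^\rho_{a,x}$ is independent of $a$ (once $a$ is far enough behind).

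For (ii), the defining limit immediately gives the cocycle identity $B^\rho_{x,z}=B^\rho_{x,y}+B^\rho_{y,z}$ and the LPP recursion $B^\rho_{x,x+e_i}=\omega_x+\min\bigl\{B^\rho_{x+e_1,x+e_1+e_2},B^\rho_{x+e_2,x+e_1+e_2}\bigr\}$ style relation at each vertex, so the path in \eqref{busegeo} is automatically a geodesic between any two of its points; absolute continuity of the Busemann increments rules out ties, giving uniqueness. To check that the path is $\xi[\rho]$-directed, apply Theorem~\ref{t:exit1} with a base point placed at $-Me_1-Me_2$ (say) and scales of order $N^{2/3}$ to bound the transversal fluctuation of the path from $x$ through $v_N$, then use Borel--Cantelli along a scale sequence. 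For (iii), fix a down-right path $\gamma=\{x_i\}$. Place the stationary process with $\gamma$ as its down-right boundary: by construction the increments $B^\rho_{x_i,x_{i+1}}$ along $\gamma$ equal the boundary-weight increments of a stationary LPP rooted below $\gamma$, so Lemma~\ref{Grho-lm5} gives the independence and the marginals in \eqref{buse78}. The bulk weights strictly below $\gamma$ are independent of everything above $\gamma$ in the i.i.d.\ model, and a memoryless-property computation (exactly the Burke-type argument at the heart of the CGM) identifies the dual weights $\widecheck\omega^\rho_z=B^\rho_{z-e_1,z}\wedge B^\rho_{z-e_2,z}$ strictly above $\gamma$ as i.i.d.\ ${\rm Exp}(1)$, independent of the boundary increments and of the bulk below.

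The hard part will be Step~(i): proving that the limit exists simultaneously for every pair $(x,y)$ on a single event of full measure, and the independence claim for the dual weights in (iii). The former requires the exit-time tail bound of Theorem~\ref{t:exit1} to be summable, combined with a monotone control over $(x,y)$ that avoids taking a union bound over an uncountable family of sequences $u_N$. The latter hinges on the Burke-type property of exponential LPP; once the joint law of $(B^\rho_{x_i,x_{i+1}},\omega_{\text{below}})$ is identified as the input of a stationary queue, the dual weights above $\gamma$ are the inter-departure times, and their independence and marginals follow from the classical Burke theorem adapted to this planar setting.
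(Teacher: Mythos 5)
The paper does not prove this theorem; it cites Section~4 of \cite{CGMlecture} and Lemma~4.1 of \cite{coalnew}. So the right comparison is between your sketch and those proofs. Your plan is in the same spirit (build $B^\rho$ from the stationary LPP and push Lemma~\ref{Grho-lm5} through Lemma~\ref{sec3lem2}), but there is a concrete gap in the way you try to connect the stationary and i.i.d.\ models.

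The equation $G^{(a),\rho}_{x,u_N}=G_{x,u_N}+C_x$ does not follow from Lemma~\ref{sec3lem2}. That lemma asserts that the \emph{geodesics} of $G^\rho_{a,\bullet}$ and of the nested process $G^{(a),\rho}_{x,\bullet}$ coincide in the interior quadrant; it says nothing about the i.i.d.\ process $G_{x,\bullet}$, and in particular does not say that $G^{(a),\rho}_{x,u_N}$ and $G_{x,u_N}$ differ by a quantity independent of $u_N$. The nested process uses stationary boundary weights on the axes at $x$, while $G_{x,\bullet}$ uses the i.i.d.\ weight $\omega$ at every vertex including the axes, so the two maximizers need not coincide near $x$ and their values do not differ by an additive constant. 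What the cited proofs actually do is \emph{squeeze}: for $\lambda<\rho<\mu$ one shows, using the exit-time control of Theorem~\ref{t:exit1} applied to parameters $\lambda$ and $\mu$ together with the planar monotonicity of Lemma~\ref{sec3lem1}, that the i.i.d.\ increments $G_{x,u_N}-G_{y,u_N}$ are eventually trapped between the (static) stationary increments at parameters $\lambda$ and $\mu$ on a full-measure event; sending $\lambda,\mu\to\rho$ then yields both the existence of the limit and its independence of the approximating sequence $u_N$. Your sketch omits this two-sided comparison entirely; the cocycle identity $G^{(a),\rho}_{x,u_N}=G^\rho_{a,u_N}-G^\rho_{a,x}$ by itself only encodes information about the stationary process and does not tell you what $G_{x,u_N}-G_{y,u_N}$ converges to.

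For (iii), you correctly identify that the dual-weight independence is a Burke-type statement, but invoking Lemma~\ref{Grho-lm5} only gives the joint law of the boundary increments along a down-right path together with the bulk weights strictly to the south-west. The assertion that the weights $\widecheck\omega^\rho_z$ strictly to the north-east of $\gamma$ are i.i.d.\ ${\rm Exp}(1)$ and independent of the other two families is the actual content of the Burke property for the CGM; it needs a proof (a single-site flip argument propagated over $\Z^2$, or the queueing-theoretic version), which your sketch acknowledges but does not supply. As written, the argument for (iii) therefore assumes the hardest part of the claim.
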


Versions of parts (i) and (ii) above can be proved for  general i.i.d.\ weights  \cite{Geo-Ras-Sep-17-ptrf-1}.  But nothing like part (iii) with precise distributions for  Busemann functions and  dual weights is known for LPP models that are not exactly solvable. 


A Busemann function $B^\rho$ can be thought as a stationary LPP process in two ways. One with north and east boundaries, denoted by $G^{\rho, NE}$, and one with south and west boundaries, denoted by  $G^{\rho}$. Here $G^\rho$ is as was given in  \eqref{sec2G^rho}, and $G^{\rho, NE}$ is defined as follows (NE stands for north and east boundaries). 

Fix an origin or base point $x\in \Z^2$.   Start with mutually independent bulk weights $\{\omega_z : z\in x-\Z^2_{>0}\}$ and boundary weights   $\{ I_{x-ke_1}, J_{x-le_2}:  k,l\in \Z_{\ge0}\}$ 
with marginal distributions
$$ \omega_z \sim \Exp(1), \quad I_{x-ke_1} \sim \Exp(1-\rho),\quad\text{and}\quad  J_{ x-le_2} \sim \Exp(\rho).$$
On the boundaries define  $G^{NE,\rho}_{x,x} = 0$, $ G^{NE,\rho}_{x-ke_1, x}  =  \sum_{i=0}^{k-1} I_{x-ie_1}$ and $ G^{NE,\rho}_{x+le_2, x}  = \sum_{j=0}^{l-1} J_{x-je_2}$  for $k,l\ge 1$.  
In the bulk we perform LPP that  uses both the boundary and the  bulk weights:  for 
$y = x  -  (m,n)\in x-\Z_{>0}^2$, 
\be\label{G-NE6} \begin{aligned}
G^{NE,\rho}_{y,x} = \max_{1\leq k \leq m} \biggl\{ \biggl(\;\sum_{i=0}^{k-1} I_{x-ie_1} \biggr)+ G_{y, x-ke_1-e_2}\biggr\}  
 \bigvee \max_{1\leq l \leq n} \biggl\{ \biggl( \;\sum_{j=0}^{l-1} J_{x-je_2} \biggr)+ G_{y, x-le_2-e_1}\biggr\}.
\end{aligned}\ee
The LPP value $G_{a,b}$ inside the braces is the  one defined by \eqref{sec2G} with i.i.d.\ bulk weights $\w$. 

  Two stationary LPP processes can be defined by taking Busemann increments as boundary weights.  Fix again a base point $x\in\Z^2$.
\begin{itemize}
\item Construct  $G^{\rho, NE}_{y,x}$ for  $y \leq x$ as in \eqref{G-NE6} using  the NE boundary weights 
$I_{x-ke_1}=B^\rho_{x-(k+1)e_1, x-ke_1}$ and  $J_{x-le_2}=B^\rho_{x-(l+1)e_2, x-le_2}$ 
and bulk weights $\{\w_z: z\in x-\Z_{>0}^2\}$. 
\item Construct  $G^\rho_{x, y'}$ for $y'\geq x$ as in \eqref{sec2G^rho} using   the SW boundary weights 
$I_{x+ke_1}=B^\rho_{x+(k-1)e_1, x+ke_1}$ and   $J_{x+le_2}=B^\rho_{x+(l-1)e_2,  x+le_2}$ and bulk weights $\{\wc\w^\rho_z: \in x+\Z_{>0}^2\}$. 
\end{itemize}
These two constructions satisfy the definitions of stationary LPP processes due to Theorem \ref{t:buse}(iii). Their key properties relative to the Busemann function are 
\begin{align}
G^{\rho, NE}_{y, x}  =  B^\rho_{y,x} \quad &\text{ for all $y \leq x$} \label{canbegeneral}\\
\text{and} \qquad G^{\rho}_{x, y'}  =  B^\rho_{x,y'} \quad &\text{ for all $y' \geq x$}. \label{needexact}
\end{align}
This is   in Theorem 4.4 of \cite{CGMlecture}. 

As the last point, we state  an independence property for a coupling of Busemann functions in two different directions. This fact was used to show the non-existence of bi-infinite geodesics \cite{balzs2019nonexistence} and local stationarity of the CGM \cite{balzs2020local}. It follows from the queuing map construction for the joint distribution (in various directions) of Busemann function from \cite{Fan-Sep-18-}.

\begin{proposition}{\rm\cite[Lemma 4.5]{balzs2020local}} \label{indbuse}
Let $0< \eta < \lambda < 1$. There exists a coupling of Busemann functions $B^\eta$ and $B^\lambda$  such that for any fixed $x\in \mathbb{Z}^2$ and  for every $k,l \in \Z_{>0}$, the following sets of random variables (on the horizontal line through $x$) are independent:  
$$
\big\{B^\eta_{x+ie_1,x+(i+1)e_1}\big\}_{-k\leq i\leq -1} \quad\text{and}\quad \big\{B^\lambda_{x+ie_1, x+(i+1)e_1}\big\}_{0\leq i \leq l-1}.
$$

\end{proposition}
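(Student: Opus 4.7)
The plan is to prove independence via the joint queueing-map coupling of Busemann functions from \cite{Fan-Sep-18-}, arranged so that the $B^\lambda$ increments to the right of $x$ on the horizontal line through $x$ and the $B^\eta$ increments to the left of $x$ become measurable with respect to disjoint, independent collections of input weights.

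First I would fix the line $L=\{y\in\Z^2: y\cdot e_2=x\cdot e_2\}$. Theorem \ref{t:buse}(iii) guarantees that for each fixed $\rho\in(0,1)$ the horizontal Busemann increments $\{B^\rho_{x+ie_1, x+(i+1)e_1}\}_{i\in\Z}$ on $L$ are i.i.d.\ Exp$(1-\rho)$ and, by themselves, are independent of the bulk weights $\{\w_z\}$ lying in the strict upper half-plane above $L$. The construction of \cite{Fan-Sep-18-} jointly realises the family $\{B^\rho\}_{\rho\in(0,1)}$ on $L$ as the output of a cascade of $M/M/1$-type queueing operators driven by i.i.d.\ Exp$(1)$ bulk weights on the rows strictly above $L$: a process at parameter $\lambda$ together with one row of Exp$(1)$ service weights produces a process at a smaller parameter, and by composing countably many such steps one obtains every $\eta\in(0,\lambda)$.

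The key probabilistic input is Burke's theorem for the stationary $M/M/1$ queue together with its time-reversibility: the past of the departure process is independent of the future of the arrival process. Applied inductively to the cascade carrying $B^\lambda|_L$ to $B^\eta|_L$, this lets one show that $\{B^\eta_{x+ie_1,x+(i+1)e_1}\}_{-k\le i\le -1}$ is a measurable function of (a) the left-half increments $\{B^\lambda_{x+ie_1,x+(i+1)e_1}\}_{i\le -1}$, and (b) the Exp$(1)$ bulk weights lying strictly above $L$ and weakly to the left of the vertical line through $x$. By Theorem \ref{t:buse}(iii), the ingredients (a) and (b) are jointly independent of the right-half increments $\{B^\lambda_{x+ie_1,x+(i+1)e_1}\}_{0\le i\le l-1}$, since (a) and the right-half $B^\lambda$ increments are disjoint subsequences of an i.i.d.\ Exp$(1-\lambda)$ sequence, and (b) lives on disjoint lattice sites from the bulk weights governing the right-half $B^\lambda$ process. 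The independence claim follows.

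The main obstacle will be the spatial bookkeeping inside the queueing cascade: one has to verify that each step of the construction in \cite{Fan-Sep-18-} genuinely respects the split of $L$ at $x$, so that no dependence leaks from the right half of $L$ into the left-half $B^\eta$ output after passing through infinitely many iterations. This is precisely the content of \cite[Lemma 4.5]{balzs2020local}, whose argument I would follow; once that bookkeeping is in place, what remains is the direct application of Burke's theorem and the independence already granted by Theorem \ref{t:buse}(iii).
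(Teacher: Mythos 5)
Your proposal follows exactly the route the paper takes: the paper offers no independent proof of Proposition \ref{indbuse} but imports it as \cite[Lemma 4.5]{balzs2020local}, noting only that it follows from the queueing-map construction of the jointly coupled Busemann process in \cite{Fan-Sep-18-}, which is precisely the coupling and the Burke-type one-sided dependence you describe. Since you defer the decisive bookkeeping step to the cited lemma itself, your argument is in substance the same citation-based justification the paper gives, and it is correct.
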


\section{Exit time estimates} \label{s:exit-pf} 

This section proves estimates on the exit time for stationary LPP processes defined in \eqref{sec2G^rho} and \eqref{sec2G^rho1}.  
 These results   are applied in Section \ref{s:proofs} to prove the main theorems stated in Section \ref{s:main}.  
The first theorem is the main intermediate result towards the  lower bound of Theorem \ref{t:main-large}.  We also introduce  useful lemmas that are  used again later in the  proof of Theorem \ref{t:small-ub}. 
\begin{theorem} \label{t:large-ub} For each  $0<\rho<1$ 
there exist finite positive constants $r_0(\rho)$, $C(\rho)$ and  $N_0(\rho)$  such that 
for all $N\geq N_0(\rho)$ and  $r_0 \leq r \leq [(1-\rho)^2\wedge\rho^2] N^{1/3}$,
\[ \mathbb{P}^\rho \big\{ \forall z \text{ outside } \lzb  0, v_N \rzb  \text{ we have } |\exittime^{\,0\,\rightarrow \,z}| \geq r N^{2/3}\big\}   \geq  e^{-Cr^3} .\]
\end{theorem}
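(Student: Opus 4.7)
The plan is a change-of-measure argument that tilts the boundary weights on the first $\bar r=\fl{rN^{2/3}}$ sites of each axis. Write $E$ for the event in the theorem. Define $\tilde{\mathbb P}$ by setting $I_k\sim\Exp(1-\rho-\epsilon)$ for $1\le k\le\bar r$ and $J_l\sim\Exp(\rho-\epsilon)$ for $1\le l\le\bar r$, where $\epsilon=Kr/N^{1/3}$ with $K=K(\rho)$ a large constant to be chosen, while leaving all other boundary and bulk weights unchanged. The Radon--Nikodym derivative $R=d\tilde{\mathbb P}/d\mathbb P^\rho$ factors as a product of $2\bar r$ independent exponential ratios, and the appendix's moment bound gives $\mathbb E^\rho[R^2]\le \exp(C_0(\rho)K^2r^3)$, since $\epsilon^2\bar r=K^2r^3$. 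Cauchy--Schwarz then yields
\[
\mathbb P^\rho(E)\ge \tilde{\mathbb P}(E)^2 / \mathbb E^\rho[R^2] \ge \tilde{\mathbb P}(E)^2\,e^{-C_0 K^2r^3},
\]
reducing the problem to showing $\tilde{\mathbb P}(E)\ge c(\rho,K)>0$.

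To verify the latter I would use the planar monotonicity of exit times (Lemma \ref{sec3lem1} and its consequences): $\exittime^{\,0\to z}$ is non-decreasing in $z_1$ and non-increasing in $z_2$. This reduces the ``for all $z$'' requirement to the statement that along each of the two border curves $\{(v_N\cdot e_1+1,z_2):z_2\ge 0\}$ and $\{(z_1,v_N\cdot e_2+1):z_1\ge 0\}$, the monotone exit-time profile transitions directly from values $\ge\bar r$ down to values $\le-\bar r$ without entering $(-\bar r,\bar r)$, with the northeast region handled by composing the two monotonicities. Under $\tilde{\mathbb P}$, the enhanced drift of the boundary sums $\tau^I_k,\tau^J_l$ shifts the typical east and north argmaxes $k_z=\arg\max_k(\tau^I_k+G_{ke_1+e_2,z})$ and $l_z=\arg\max_l(\tau^J_l+G_{e_1+le_2,z})$ upward; a direct calibration using the LPP shape function (as reviewed in Section \ref{s:prelim}) shows that when $z$ lies near $v_N$, both argmaxes grow like $\Theta(K)\bar r$. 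Choosing $K$ larger than $\rho/(2(1-\rho))\vee(1-\rho)/(2\rho)$ ensures that both typical values comfortably exceed $\bar r$; combining this with the random walk lemma in the appendix for control of $\tau^I,\tau^J$ and with Theorem \ref{t:exit1} applied at appropriately shifted endpoints to control bulk fluctuations yields $k_z\ge\bar r$ and $l_z\ge\bar r$ simultaneously for all relevant $z$ with probability bounded below under $\tilde{\mathbb P}$. This forces the required jump behavior of the exit-time profile and produces the uniform event~$E$.

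The principal obstacle lies in the uniformity over $z\notin\lzb 0,v_N\rzb$: bounding the exit time at a single fixed endpoint is accessible from Corollary \ref{sec3cor}-type estimates, but simultaneous control at every $z$ outside the rectangle requires the simultaneous lower bounds on both argmaxes under the tilt. The planar monotonicity is what makes this tractable, reducing the uncountable condition to a one-dimensional transition on each border curve that the tilted boundary prevents from passing through values near zero; the calibration of $K$ must therefore balance two competing demands, making the typical argmaxes large without inflating $\mathbb E^\rho[R^2]$ beyond the allowed factor $e^{Cr^3}$.
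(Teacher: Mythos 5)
Your skeleton is the paper's: tilt the boundary weights, bound $\mathbb{E}^\rho[R^2]$ by $e^{CK^2r^3}$ via the appendix lemma, and use Cauchy--Schwarz to reduce to a constant lower bound on the tilted probability. The Radon--Nikodym bookkeeping ($\epsilon^2\bar r=K^2r^3$) is correct. The gap is in the reduction's target: your tilt covers exactly $\lzb e_1,\fl{rN^{2/3}}e_1\rzb$ (and the same on the $e_2$-axis), and your justification that the tilted argmaxes ``grow like $\Theta(K)\bar r$'' is false. Beyond $k=\bar r$ the weights revert to rate $1-\rho$, so the deterministic profile $k\mapsto \mathbb{E}[\tau^I_k+G_{ke_1+e_2,v_N}]$ increases on $[0,\bar r]$ (with slope of order $K r N^{-1/3}$) and then decreases for $k>\bar r$ (with slope of order $rN^{-1/3}$ from the curvature of the shape function); no choice of $K$ can push the maximizer past the end of the tilted window. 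The maximizer therefore sits exactly at $\bar r$, so $\{k_z\ge\bar r\}$ is a critical event: its probability under $\tilde{\mathbb P}$ is governed by which side of the kink a two-sided random walk attains its maximum (heuristically $K/(K+c)$ in the diffusive limit), and neither Lemma \ref{rw} nor Theorem \ref{t:exit1} gives you that lower bound. You would either have to carry out that argmax analysis, or weaken the target to $\{|\exittime|\ge a\bar r\}$ for a small constant $a$ and rescale $r$ at the end --- but then you still need a quantitative statement that the tilted exit point concentrates in $[a\bar r,\,b\bar r]$, which is precisely the content of the paper's Lemma \ref{lemmaab} and is not supplied by your ``direct calibration.''

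The second, related gap is the uniformity over $z$. You correctly reduce via planar monotonicity to the north/east boundary of $\lzb 0,v_N\rzb$, but the claim that $k_z\ge\bar r$ and $l_z\ge\bar r$ hold \emph{simultaneously for all relevant $z$} with probability bounded below is asserted, not proved; ``Theorem \ref{t:exit1} at shifted endpoints'' controls one endpoint at a time. The paper handles this by tilting on $\lzb \fl{ar N^{2/3}}e_1,\fl{brN^{2/3}}e_1\rzb$ with $a$ small and $b$ large, chosen so that the characteristic exit location of the perturbed parameter $\lambda=\rho+rN^{-1/3}$ lands strictly \emph{inside} the tilted window: the corner region $\cD$ is then treated by comparing $\wt G$, $G^\rho$ and $G^\lambda$ restricted to exit ranges together with expectation and variance estimates (\eqref{diffexp}, \eqref{boundvar}), while the far regions $\cL$ are handled by the monotone switch $\wt{\mathbb P}\to\mathbb P^\rho$ plus Corollary \ref{sec3cor} --- a comparison that is only available because the tilt leaves the weights on $\lzb e_1,\fl{arN^{2/3}-1}e_1\rzb$ untouched. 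With your tilt occupying the entire dangerous range $[1,\bar r]$, that one-sided comparison goes the wrong way, so the far-region argument also needs to be rebuilt. As written, the proposal does not close either step.
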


To prove this bound we tilt the probability measure to make the event likely and pay for this with a moment bound on   the Radon-Nikodym derivative. This argument was introduced in \cite{bala-sepp-aom} in the context of ASEP, and adapted to  a lower bound proof of the  longitudinal  fluctuation exponent in the stationary LPP in Section 5.5 of the lectures \cite{CGMlecture}. The key idea is a perturbation of the parameter $\rho$ of the stationary LPP process to   $\rho \pm rN^{-1/3}$. This allows us to control the exit point on the scale $N^{2/3}$.  The general  idea of utilizing perturbations of order $N^{-1/3}$ goes back to the seminal paper \cite{cato-groe-06}. 

Lemma \ref{lemmaab} below is  an auxiliary estimate for the proof of Theorem \ref{t:large-ub}.  It  utilizes a perturbed parameter $\lambda = \rho + rN^{-1/3}$, assumed 
to satisfy 
\beq \label{lambda} \rho < \lambda  \leq  c(\rho)  < 1 \eeq
for some constant $c(\rho)<1$, as $r$ and $N$ vary.  
   Lemma \ref{lemmaab} shows that, for   small enough $a>0$ and large enough $b, r>0$,  the $\lambda$-geodesic   to a target point $w_N$ slightly perturbed from $v_N$ exits the $e_1$-axis   through the interval $[\![ arN^{2/3}e_1, brN^{2/3}e_1]\!]$ with high probability.  This is illustrated  on the right of Figure \ref{fig99}.   
The constants $1-\rho$ and ${2}/{\rho^2}$ in Lemma \ref{lemmaab} come from the following observation (left diagram of Figure \ref{fig99}).   Start two rays at $(0,0)$ in the directions $\xi[\rho]$ and $\xi[\lambda]$ and let $u_N$ be the lattice point closest to the $\xi[\lambda]$-directed ray such that $u_N \cdot e_2 = v_N \cdot e_2$.  Then
\beq \label{cbound} (1-\rho) rN^{2/3} \leq v_N\cdot e_1 - u_N \cdot e_1 \leq \frac{2}{\rho^2}rN^{2/3}. \eeq

\begin{figure}[t]
\captionsetup{width=0.8\textwidth}
\begin{center}
\begin{tikzpicture}[scale = 0.8]

\draw[gray, line width=0.3mm, ->] (0,0) -- (5,0);
\draw[gray, line width=0.3mm, ->] (0,0) -- (0,4);

\draw[gray ,loosely dotted, line width=0.5mm, ->] (0,0) -- (1*5, 1*4);

\draw[gray ,loosely dotted, line width=0.5mm, ->] (0,0) -- (5-1.4/0.8, 4) ;
\fill[color=white] (0.8*5, 0.8*4)circle(1.7mm); 
\draw[ fill=lightgray](0.8*5, 0.8*4)circle(1mm);
\node at (0.8*5, 0.8*4+0.4) {$v_N$};

\fill[color=white] (0.8*5- 1.4, 0.8*4)circle(1.7mm); 
\draw[ fill=lightgray](0.8*5- 1.4, 0.8*4)circle(1mm);
\node at (0.8*5- 1.5 , 0.8*4+0.4) {$u_N$};

\node at (5+0.4, 4-0.3) {$\xi[\rho]$};
\node at (5-1.4/0.8+0.1, 4+0.4){$\xi[\lambda]$};

\node at (0-0.6, 0) {$(0,0)$};

\draw[black ,dotted, line width=0.3mm] (1.4 ,0) -- (0.8*5, 0.8*4);
\draw[ fill=black](1.4, 0)circle(1.3mm);

\draw[gray, line width=0.3mm, ->] (0+8,0) -- (5+8.5,0);
\draw[gray, line width=0.3mm, ->] (0+8,0) -- (0+8,4);

\draw[gray ,dotted, line width= 1mm] (8,0)--(9.1,0) --(9.1,0.6)--(9.7,0.6)--(9.7,0.8)--(9.7,1.1) --(10.1,1.1)--(10.1,1.5)--(10.9,1.5) -- (10.9,1.9)--(11.5,1.9)--(11.5,2.7)--(12,2.7)--(12,3.2);

\draw[ line width=1mm] (0.24+8.5,-0.1) -- (0.24+8.5,0.1);
\draw[ line width=1mm] (2+8.5,-0.1) -- (2+8.5,0.1);

\draw[black ,dotted, line width=0.3mm]  (1.4+8.5,0) -- (0.8*5+8.5, 0.8*4);
\draw[black ,dotted, line width=0.3mm] (1.4+8.5-0.5 ,0) -- (0.8*5+8.5-0.5, 0.8*4);


\draw[ fill=lightgray](0.8*5+8.5, 0.8*4)circle(1mm);
\node at (0.8*5+9, 0.8*4) {$v_N$};

\draw[ fill=lightgray](0.8*5+8.5-0.5, 0.8*4)circle(1mm);
\node at (0.8*5+8.5-0.6, 0.8*4+0.4) {$w_N$};

\node at (0.1+8.5, -0.4) {${arN^{2/3}}$};
\node at (2.5+8.5, -0.4) {${brN^{2/3}}$};

\node at (5+8.5, -0.4) {$\Exp(1-\lambda)$};
\node at (-0.8+8, 4) {$\Exp(\lambda)$};


\node at (0-0.6+8, 0) {$(0,0)$};

\draw[ fill=black](1.4+8.5, 0)circle(1.3mm);

\draw[ fill=white](0.9+8.5, 0)circle(1.3mm);

\end{tikzpicture}
\end{center}
\caption{\small \textit{Left:} Illustration of estimate \eqref{cbound}. \textit{Right:} Illustration of Lemma \ref{lemmaab}.  
The dotted lines have characteristic slope $\xi[\lambda]$. Consequently, with high probability, the geodesic from $0$ to $w_N$ exits through the interval $[\![ arN^{2/3}e_1, brN^{2/3}e_1]\!]$.}
\label{fig99}
\end{figure}

\begin{lemma} \label{lemmaab} 
Let $\lambda = \rho + rN^{-1/3}$ and $w_N = v_N - \floor{\frac{1}{10}(1-\rho)rN^{2/3}}e_1$. There exist positive constants $ C, N_0$ that  depend only on $\rho$ such that, for any $r>0$ and  $N \geq N_0$ such that \eqref{lambda} holds, we have 
$$\mathbb{P}^\lambda \left({\tfrac1{10}(1-\rho)rN^{2/3}} \leq \exittime^{\,0\,\rightarrow\, w_N} \leq {10\frac{2}{\rho^2}rN^{2/3}} \right) \geq 1- e^{-Cr^3}.$$
\end{lemma}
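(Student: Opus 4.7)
The plan is to bound the two tail events $\{\exittime^{\,0\rightarrow w_N}>b_rN^{2/3}\}$ and $\{\exittime^{\,0\rightarrow w_N}<a_rN^{2/3}\}$ (writing $a_r:=\tfrac{1}{10}(1-\rho)r$ and $b_r:=\tfrac{20r}{\rho^2}$) by $e^{-Cr^3}$, via a common mechanism: shift the base point along the $e_1$-axis by a distance comparable to the target exit time, use Lemma \ref{sec3lem2} to equate the two geodesics inside the new positive quadrant, use Lemma \ref{Grho-lm5} to read the shifted stationary process as a distributional copy of the original, and invoke Corollary \ref{sec3cor} for the resulting exit-time tail. The organizing computation is to locate the characteristic point $v^\lambda_M$ for parameter $\lambda$ at the scale $M=\floor{N\rho^2/\lambda^2}$ that matches the height of $w_N$. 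A Taylor expansion of $(1-\lambda)^2/\lambda^2$ around $\rho$ in the small parameter $\lambda-\rho=rN^{-1/3}$ yields
\[
w_N\cdot e_1-v^\lambda_M\cdot e_1\;=\;c(\rho)\,rN^{2/3}+O(1),\qquad c(\rho):=(1-\rho)\Bigl(\tfrac{2}{\rho}-\tfrac{1}{10}\Bigr)>0,
\]
so $w_N$ lies to the right of $v^\lambda_M$ at distance of order $rN^{2/3}$, and one checks elementarily that $a_r<c(\rho)r<b_r$ for every $\rho\in(0,1)$.

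For the upper bound, set $z_2=\floor{b_rN^{2/3}}e_1$. On $\{\exittime^{\,0\rightarrow w_N}>|z_2|\}$ the $\lambda$-geodesic from $0$ to $w_N$ traverses the $e_1$-axis past $z_2$ and turns up at some $(k,0)$ with $k>|z_2|$, so it first enters $z_2+\Z^2_{>0}$ at $(k,1)$; Lemma \ref{sec3lem2} then forces the geodesic of $G^{(0),\lambda}_{z_2,w_N}$ to enter this quadrant at the same point, which pins down $\exittime^{\,z_2\rightarrow w_N}_{G^{(0),\lambda}}=k-|z_2|\ge 1$. By Lemma \ref{Grho-lm5} this variable has the same $\mathbb{P}^\lambda$-law as $\exittime^{\,0\rightarrow w_N-z_2}$, whose target lies $(b_r-c(\rho))rN^{2/3}+O(1)$ units to the \emph{left} of $v^\lambda_M$. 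Since $b_r-c(\rho)>0$ and $(N/M)^{2/3}$ is bounded above and below by constants depending only on $\rho$ under \eqref{lambda}, Corollary \ref{sec3cor}\,\eqref{weprove2} (applied with parameter $\lambda$ and scale $M$) bounds this probability by $e^{-Cr^3}$.

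For the lower bound, split $\{\exittime^{\,0\rightarrow w_N}<a_rN^{2/3}\}$ into $\{\exittime\le -1\}$ and $\{1\le\exittime\le|z_1|-1\}$, with $z_1=\floor{a_rN^{2/3}}e_1$. The first piece is bounded by $e^{-Cr^3}$ directly from Corollary \ref{sec3cor}\,\eqref{weprove}, since $w_N$ lies $c(\rho)rN^{2/3}$ to the right of $v^\lambda_M$. For the second piece the geometry is reversed: if the $\lambda$-geodesic exits the axis at some $k<|z_1|$, then after its $e_2$-step at $(k,0)\rightarrow(k,1)$ the up-right continuation must first cross the vertical line $\{x=|z_1|\}$ at some height $y\ge 1$, so it enters $z_1+\Z^2_{>0}$ at $(|z_1|+1,y)$ through the \emph{left} face; Lemma \ref{sec3lem2} identifies this entry with that of the geodesic of $G^{(0),\lambda}_{z_1,w_N}$, and the only way for that geodesic---starting at $z_1=(|z_1|,0)$---to enter the left face is first to climb its $e_2$-axis to $(|z_1|,y)$, forcing $\exittime^{\,z_1\rightarrow w_N}_{G^{(0),\lambda}}=-y\le -1$. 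A second appeal to Corollary \ref{sec3cor}\,\eqref{weprove}, now with the shift $(c(\rho)-a_r/r)\,r>0$ from $v^\lambda_M$, closes the estimate.

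The principal obstacle I anticipate is the geometric bookkeeping in the second sub-event of the lower bound: one must identify which face of the shifted quadrant the original geodesic pierces, and invert that information into a statement about the sign of the shifted exit time. Everything else reduces to the Taylor expansion of $v^\lambda_M$, the numerical inequalities $a_r<c(\rho)r<b_r$, and the uniform control of $M/N$ under hypothesis \eqref{lambda}.
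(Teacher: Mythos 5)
Your overall strategy is sound and, for the upper tail, is essentially the paper's own argument: the paper also shifts the base point horizontally (to the ``black dot'' where the $-\xi[\lambda]$ ray from $w_N$ meets the axis, at distance $\le \tfrac{2}{\rho^2}rN^{2/3}=\tfrac1{10}b_rN^{2/3}$ from the origin, by \eqref{cbound}) and then invokes Lemma \ref{sec3lem2} plus Theorem \ref{t:exit1}; your variant shifts all the way to $b_rN^{2/3}e_1$ and quotes Corollary \ref{sec3cor}\eqref{weprove2} instead, which is the same estimate repackaged. For the lower tail you genuinely deviate: the paper makes a \emph{diagonal} shift to $(\floor{a_rN^{2/3}},-h)$ with $h\asymp rN^{2/3}$ chosen so that $w_N$ is in the characteristic direction from the new base point, and then uses Lemma \ref{relatetau} to convert $\{\exittime^{\,0\to w_N}\le\floor{a_rN^{2/3}}\}$ (which automatically includes all negative exit times) into $\{\exittime^{(\floor{a_rN^{2/3}},-h)\to w_N}<-h\}$, controlled by Theorem \ref{t:exit1}. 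Your split into $\{\exittime\le-1\}$ plus a horizontal shift with an off-characteristic target is a workable alternative that reduces to the same input.

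Two points need repair. First, your reductions ``$\exittime^{\,z_2\to w_N}\ge 1$'' and ``$\exittime^{\,z_1\to w_N}\le -1$'' are not forced when the shifted geodesic's first bulk point is $z_i+e_1+e_2$: that point is reached either through $z_i+e_1$ (exit time $+1$) or through $z_i+e_2$ (exit time $-1$), and knowing where the \emph{original} geodesic enters the open quadrant does not resolve the sign. This is exactly why Lemma \ref{relatetau} requires the auxiliary base point to sit strictly below the original axis ($n>0$); your choice $n=0$ is the excluded degenerate case. The fix is cheap --- put the auxiliary base at $(\floor{a_rN^{2/3}},-1)$, apply Lemma \ref{relatetau} verbatim, and note that $w_N$ is still $\Theta(rN^{2/3})$ to the right of the characteristic ray from that point, so Corollary \ref{sec3cor}\eqref{weprove} still applies --- but as written the implication has a hole. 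Second, the claim $w_N\cdot e_1-v_M^\lambda\cdot e_1=c(\rho)rN^{2/3}+O(1)$ is false in the regime allowed by \eqref{lambda}: there $\lambda-\rho=rN^{-1/3}$ is only bounded, not small, and the second-order Taylor term of $(1-\lambda)^2/\lambda^2$ contributes $O(r^2N^{1/3})$, which can be comparable to the leading term. This is why the paper states only the two-sided bound \eqref{cbound}, $(1-\rho)rN^{2/3}\le v_N\cdot e_1-u_N\cdot e_1\le\tfrac{2}{\rho^2}rN^{2/3}$, and why $a_r$ and $b_r$ carry factors of $\tfrac1{10}$ and $10$: they must bracket the whole interval, not a single value $c(\rho)r$. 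Your margins do survive once you replace the Taylor identity by \eqref{cbound}, so the argument closes, but the expansion as stated cannot be used.
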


Before the proof of Lemma \ref{lemmaab}, 
we separate an  observation about geodesics in the next lemma, illustrated by the left diagram of Figure \ref{figlemmaab}. 
It  comes from the idea of Lemma  \ref{sec3lem2a} of constructing nested LPP processes with boundary weights defined by increments of an outer LPP process.    (Lemma \ref{relatetau} is proved as  Lemma A.3 in the appendix of  \cite{CGMlecture}.)

\begin{lemma}\label{relatetau}  Fix two base points $(0,0)$ and $(m,-n)$ with $m,n > 0$.   From these base points define  coupled LPP processes  $G^{(u)}_{(0,0),\,\bbullet}$ and $G^{(u)}_{(m,-n),\,\bbullet}$  whose boundary weights come from the increments of an  LPP process  $G_{u, \hspace{0.5pt}\bbullet}$ whose base point $u$ satisfies $u\le (0,0)$ and $u\le (m,-n)$.   Then for $z\in ((0,0)+\Z_{>0}^2)\cap((m,-n)+\Z_{>0}^2)$,  
$\exittime^{\,0\,\rightarrow \,z} \leq m$ if and only if $\exittime^{(m,-n)\,\rightarrow \,z} < -n$.
\end{lemma}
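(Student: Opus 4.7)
The plan is to reduce both sides of the equivalence to geometric statements about a single deterministic object, namely the geodesic $\pi$ of the outer LPP $G_{u,\bullet}$ from $u$ to $z$, and then verify the equivalence by a short inspection of up-right lattice paths. The only nontrivial input needed is Lemma~\ref{sec3lem2a}.

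First I would apply Lemma~\ref{sec3lem2a} to each inner process. For $G^{(u)}_{(0,0),\bullet}$, the lemma gives that its geodesic coincides with $\pi$ inside $(0,0)+\Z^2_{>0}$; in particular, the first lattice point $w_1$ at which $\pi$ enters this strict quadrant equals the first point the $G^{(u)}_{(0,0),z}$-geodesic enters it, that is, one step after it leaves the boundary axes based at $(0,0)$. The same reasoning applied to $G^{(u)}_{(m,-n),\bullet}$ defines the first entry point $w_2$ of $\pi$ into $(m,-n)+\Z^2_{>0}$. An up-right path enters a base quadrant $p+\Z^2_{>0}$ only via an $e_2$-step from a south-axis point $p+ke_1$ (giving $\exittime=k>0$) or via an $e_1$-step from a west-axis point $p+le_2$ (giving $\exittime=-l<0$). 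Reading this off directly from \eqref{sec2G^rho1} yields the tautological equivalences $\exittime^{\,0\to z}\le m \Longleftrightarrow w_1\cdot e_1\le m$, and $\exittime^{(m,-n)\to z}<-n \Longleftrightarrow w_2=(m+1,b')$ for some $b'\ge 1$.

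It then remains to check the purely geometric equivalence of these two conditions on $\pi$. For the forward direction, if $w_1\cdot e_1\le m$ (and automatically $w_1\cdot e_2\ge 1$), then every point of $\pi$ from $w_1$ onward has $e_2$-coordinate at least $1$, while $\pi$ must still cross the vertical line $\{e_1\text{-coord}=m+1\}$ to reach $z$; this crossing occurs via an $e_1$-step from some $(m,b')$ to $(m+1,b')$ with $b'\ge 1$, and since $\pi$ has $e_1$-coordinate at most $m$ up to that step, the crossing is the first entry of $\pi$ into $(m,-n)+\Z^2_{>0}$, hence $w_2=(m+1,b')$ with $b'\ge 1$. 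Conversely, if $w_2=(m+1,b')$ with $b'\ge 1$, then the preceding point $(m,b')$ of $\pi$ already lies in $(0,0)+\Z^2_{>0}$, so by coordinatewise monotonicity of the up-right path, $w_1$ is visited no later than $(m,b')$ along $\pi$ and hence satisfies $w_1\cdot e_1\le m$.

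The main obstacle is simply bookkeeping: correctly matching the sign conventions and thresholds in $\exittime^{\,0\to z}\le m$ versus $\exittime^{(m,-n)\to z}<-n$ to the geometric descriptions of $w_1$ and $w_2$, and confirming that the first-entry points into each base quadrant are reached by exactly one lateral step from one of the two adjacent axes. Once this is set up, the argument is deterministic and elementary; no probabilistic input beyond Lemma~\ref{sec3lem2a} is invoked.
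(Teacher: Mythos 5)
Your proof is correct, and it is essentially the argument the paper intends: the paper defers the formal proof to Lemma A.3 of \cite{CGMlecture}, but its accompanying illustration (the left diagram of Figure \ref{figlemmaab}) depicts exactly your mechanism of reducing both exit-time conditions, via Lemma \ref{sec3lem2a}, to where the single outer geodesic from $u$ first enters each of the two quadrants. Your bookkeeping of the entry points $w_1$ and $w_2$ and the sign conventions is accurate (including the implicit ruling-out of an $e_2$-step into $w_2$, which your earlier observation about entry steps from the axes already covers), so there is no gap.
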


\begin{proof}[Proof of Lemma \ref{lemmaab}]
Let $a=\frac{1}{10}(1-\rho)$, $b=10\frac{2}{\rho^2}$.

It suffices to show that  if $r>0$ and  $N \geq N_0$ are such that \eqref{lambda} holds, then
\begin{align}
&\mathbb{P}^\lambda \big( \exittime^{\,0\,\rightarrow\, w_N} <arN^{2/3} \big) \leq e^{-Cr^3}, \label{noteasyone}\\
&\mathbb{P}^\lambda \big(\exittime^{\,0\,\rightarrow\, w_N} > brN^{2/3} \big) \leq e^{-Cr^3}.\label{easyone}
\end{align}

By  \eqref{cbound}   the distance between the origin and the black dot on the $x$-axis on the right of Figure \ref{fig99}   is bounded above by $\frac{2}{\rho^2}rN^{2/3} = \frac{1}{10}brN^{2/3}$. So the distance between the black dot and  $brN^{2/3} e_1$ is at least $brN^{2/3} - \frac{1}{10}brN^{2/3} = \frac{9}{10}brN^{2/3}$. 
Apply Lemma \ref{sec3lem2} to switch from the geodesic based at the origin to one based at the black dot, and apply  Theorem \ref{t:exit1} to the LPP process $G^{(0),\rho}_{\text{blackdot}, \,\bbullet}$:
$$\begin{aligned}\mathbb{P}^\lambda \big(\exittime^{\,0\,\rightarrow\, w_N} > brN^{2/3} \big) \leq \mathbb{P}^\lambda \big(\exittime^{\,0\,\rightarrow\, v_N} > brN^{2/3} \big)\\
\leq \mathbb{P}^\lambda \big(\exittime^{\text{ black dot }\rightarrow  \, v_N} \geq \tfrac{9}{10}brN^{2/3}\big) \leq e^{-Cr^3}.
\end{aligned}$$

To prove \eqref{noteasyone}   choose $h$  so  that   $(\floor{arN^{2/3}}, -h)$ is the  closest integer point to the $(-\xi[\lambda])$-directed ray starting at $w_N$  (see Figure \ref{figlemmaab}). Lemma \ref{relatetau} gives 
$$
\mathbb{P}^\lambda \big( \exittime^{\,0\,\rightarrow\, w_N} \leq \floor{arN^{2/3}} \big)  = \mathbb{P}^\lambda \big( \exittime^{\,(\floor{arN^{2/3}}, -h)\,\rightarrow\, w_N} < -h \big).
$$

\begin{figure}[t]
\captionsetup{width=0.8\textwidth}
\begin{center}
\begin{tikzpicture}[scale = 0.8]

\draw[lightgray, line width=0.3mm, ->] (-2-8,-2) -- (-2-8,4);
\draw[lightgray,line width=0.3mm, ->] (-2-8,-2) -- (3-8,-2);

\draw[gray, line width=0.3mm, ->] (0-8,0) -- (5-8,0);
\draw[gray, line width=0.3mm, ->] (0-8,0) -- (0-8,4);

\draw[ line width=0.3mm, ->] (1-8,-1) -- (4-8,-1);
\draw[ line width=0.3mm, ->] (1-8,-1) -- (1-8,2);

\draw[gray ,dotted, line width=1.1mm] (0-8,0) -- (0.7-8,0) -- (0.7-8,0.5) --(1-8,0.5) ;

\draw[lightgray ,dotted, line width=1.4mm] (1-8,0.5) --(2-8,0.5)-- (2-8,1) -- (4-8, 1) -- (4-8,3) ;
\draw[black ,dotted, line width=0.6mm] (1-8,0)  --(1-8,0.5) --(2-8,0.5)-- (2-8,1) -- (4-8, 1) -- (4-8,3) ;

\draw[black ,dotted, line width=1.1mm] (1-8,-1) -- (1-8,0.5);

\draw[lightgray, dotted, line width=0.7mm] (-2-8,-2)  --(-2-8,-0.8) --(0.7-8,-0.8) --(0.7-8,0) ;

\draw[ fill=lightgray](0-8,0)circle(1mm);
\node at (-0.3-8,-0.4) {$(0,0)$};

\draw[ fill=lightgray](1-8,-1)circle(1mm);
\node at (0.7-8,-1.4) {$(m,-n)$};

\draw[ fill=lightgray](-8-2,-2)circle(1mm);
\node at (-8-2.3,-2.3) {$u$};

\fill[color=white] (-8+4,3)circle(1.7mm); 
\draw[ fill=lightgray](-8+4,3)circle(1mm);
\node at (-8+4.3,3.3) {$z$};

--------------------------------------------------------------

\draw[gray ,dotted, line width=0.3mm](1,-1)--(4,3);

\draw[gray, line width=0.3mm, ->] (0,0) -- (6,0);
\draw[gray, line width=0.3mm, ->] (0,0) -- (0,4);

\draw[ line width=0.3mm, ->] (1,-1) -- (5,-1);
\draw[ line width=0.3mm, ->] (1,-1) -- (1,2);

\draw[ fill=black](2.2, 0)circle(1.3mm);

\draw[ fill=white](1.75, 0)circle(1.3mm);

\draw[gray ,dotted, line width=1.1mm] (0,0) -- (0.7,0) -- (0.7,0.5) --(1,0.5) ;

\draw[lightgray ,dotted, line width=1.4mm] (1,0.5) --(2,0.5)-- (2,1) -- (4, 1) -- (4,3) ;
\draw[black ,dotted, line width=0.6mm] (1,0)  --(1,0.5) --(2,0.5)-- (2,1) -- (4, 1) -- (4,3) ;

\draw[black ,dotted, line width=1.1mm] (1,-1) -- (1,0.5);

\draw[ fill=lightgray](0,0)circle(1mm);
\node at (-0.3,-0.4) {$(0,0)$};

\draw[ fill=lightgray](1,-1)circle(1mm);
\node at (0.7,-1.4) {$({arN^{2/3}},-h)$};

\fill[color=white] (4,3)circle(1.7mm); 
\draw[ fill=lightgray](4,3)circle(1mm);
\node at (4.3,3.3) {$w_N$};

\draw[ line width=1mm] (3,-0.1) -- (3,0.1);
\node at (3.3, -0.4) {${brN^{2/3}}$};

\end{tikzpicture}
\end{center}
\caption{\small {\it Left:} An illustration of Lemma \ref{relatetau}. As shown in the picture $\exittime^{(0,0)\rightarrow  \, z} \leq m$ if and only if $\exittime^{(m,-n)\rightarrow  \, z} < -n$. {\it Right:} Applying Lemma \ref{relatetau} in the proof of Lemma \ref{lemmaab} to assert that  $\mathbb{P}^\lambda \big( \exittime^{\,0\,\rightarrow\, w_N} \leq \floor{arN^{2/3}} \big)  = \mathbb{P}^\lambda \big( \exittime^{\,(\floor{arN^{2/3}}, -h)\,\rightarrow\, w_N} < -h \big).$}
\label{figlemmaab}
\end{figure}
Theorem \ref{t:exit1} states that it is unlikely for the $\lambda$-geodesic from $(\floor{arN^{2/3}}, -h)$ to $w_N$ to exit  late in the scale $N^{2/3}$  from the $y$-axis, because   the direction is the  characteristic one $\xi[\lambda]$. Thus it suffices to show $h$ is bounded below by some $k(\rho)rN^{2/3}$. 

Using the lower bound from $\eqref{cbound}$, the distance between the black dot and $(0,0)$ is bounded below by $(1-\rho)rN^{2/3} = 10arN^{2/3}$. The distance between the black dot and $\floor{arN^{2/3}}e_1$ is bounded below by $9arN^{2/3}$, and  the distance between the white dot and $\floor{arN^{2/3}}e_1$ is bounded below by $8arN^{2/3}$. The slope of the line going through $w_N$ and white dot is $\frac{\lambda^2}{(1-\lambda)^2}$. Thus, we have 
$$ h \geq \frac{\lambda^2}{(1-\lambda)^2}8arN^{2/3}.$$
Since $\lambda$ is bounded above and below by constants that depend on $\rho$, we get 
$$ h \geq k(\rho) rN^{2/3}$$
which finishes the proof.
\end{proof}

\begin{proof}[Proof of Theorem \ref{t:large-ub}]

For two fixed constants $0< a <b$, 
we increase  the weights on the intervals $\lzb \floor{arN^{2/3}}e_1,  \floor{brN^{2/3}}e_1 \rzb $ and $\lzb  \floor{arN^{2/3}}e_2,  \floor{brN^{2/3}}e_2\rzb$.  The new weights are chosen so  that their  characteristic directions  obey the  left diagram  of Figure \ref{fig21}  for  large $N \geq N_0(\rho)$. 

On the $e_1$-axis, define 
$$
\lambda = \rho + \frac{r}{N^{1/3}}
$$
and use $\Exp(1-\lambda)$ as the heavier weights.
The assumption  $ 0<r \leq [(1-\rho)^2\wedge\rho^2] N^{1/3}$ guarantees that $\rho<\lambda \leq \rho + (1-\rho)^2 < 1$.
On the $e_2$-axis, we define 
$$
\eta = \rho - \frac{r}{N^{1/3}},
$$
and the heavier weights are $\Exp(\eta)$. The condition $0< r \leq [(1-\rho)^2\wedge\rho^2] N^{1/3}$ guarantees that $0<\rho -(1-\rho)^2\wedge\rho^2 \leq \eta< \rho$.

Note that Lemma \ref{lemmaab} continues to hold if   $a$ is decreased and   $b$ is increased.   The constants   $a,b, N_0$ can always be adjusted so that  the situation in the  left diagram  of Figure \ref{fig21} appears. Later, we may decrease the value of $a$ further in our proof.

\begin{figure}[t]
\captionsetup{width=0.9\textwidth}
\begin{center}
\begin{tikzpicture}

\draw[gray, line width=0.3mm, ->] (0,0) -- (5,0);
\draw[gray, line width=0.3mm, ->] (0,0) -- (0,4);

\draw[ line width=0.8mm] (0.8,0) -- (2,0);
\draw[ line width=0.8mm] (0,0.8) -- (0,2);

\draw[ line width=1mm] (-0.1,0.8) -- (0.1,0.8);
\draw[ line width=1mm] (-0.1,2) -- (0.1,2);

\draw[ line width=1mm] (0.8,-0.1) -- (0.8,0.1);
\draw[ line width=1mm] (2,-0.1) -- (2,0.1);

\draw[dotted, line width=0.8mm] (1.4,0) -- (0.8*5, 0.8*4);
\draw[dotted, line width=0.8mm] (0,1.4) -- (0.8*5, 0.8*4);

\draw[gray ,dotted, line width=0.3mm, ->] (0,0) -- (5,4) ;
\fill[color=white] (0.8*5, 0.8*4)circle(1.7mm); 
\draw[ fill=lightgray](0.8*5, 0.8*4)circle(1mm);
\node at (0.8*5, 0.8*4+0.4) {$v_N$};

\node at (0.7, -0.4) {$arN^{2/3}$};
\node at (2.3, -0.4) {$brN^{2/3}$};

\node at (-0.8, 0.8) {$arN^{2/3}$};
\node at (-0.8, 2.0) {$brN^{2/3}$};

\node at (-0.6, 0) {$(0,0)$};

\draw[ fill=black](1.4, 0)circle(1.2mm);
\draw[ fill=black](0, 1.4)circle(1.2mm);

----------------------------------------
\draw[line width = 0.3mm, ->] (0+7,0)--(0+7,4);
\draw[line width = 0.3mm, ->] (0+7,0)--(5+7,0);

\draw[line width = 1mm, ] (0.8+7,0)--(2+7,0);
\draw[line width = 1mm, ] (0.8+7,-0.1)--(0.8+7,0.1);
\draw[line width = 1mm, ] (2+7,-0.1)--(2+7,0.1);

\draw[line width = 1mm, ] (0+7,0.8)--(0+7,2);
\draw[line width = 1mm, ] (-0.1+7,0.8)--(0.1+7,0.8);
\draw[line width = 1mm, ] (-0.1+7,2)--(0.1+7,2);

\draw[dotted, line width = 0.8mm, ] (3.3+7,3)--(1.2+7,0);

\draw[color=lightgray, line width = 1mm] (0+7,3) -- (3.3+7, 3);
\draw[ color = darkgray, line width = 1mm] (3.3+7,3) -- (4+7, 3) -- (4+7,0);

\draw[loosely dotted, line width = 0.3mm, ->] (0+7,0) -- (5+7, 3+3/4);

  (2.5+7,3.1) -- (4+7,3.1);

\fill[color=white] (3.3+7,3)circle(1.7mm);
\draw[ fill=lightgray](3.3+7,3)circle(1mm);
\node at (3.3+7,3.4) {\small $w_N$};
\node at (1.8+7,3.3) {\small $\cL$};
\fill[color=white] (0+7,0)circle(1.7mm);
\draw[ fill=white](0+7,0)circle(1mm);
\node at (-0.6+7,0) {\small $(0,0)$};

\fill[color=white] (4+7,3)circle(1.7mm);
\draw[ fill=lightgray](4+7,3)circle(1mm);
\node at (4.4+7,2.8) {\small $v_N$};
\node at (4.3+7,1.5) {\small $\cD$};

\node at (5.5+7, 4) {$\xi[\rho]$};

\node at (1.5, 2.7) {$\xi[\eta]$};

\node at (3, 1.2) {$\xi[\lambda]$};

\end{tikzpicture}
\end{center}
\caption{\small {\it Left:} Two dotted lines have slopes $\xi[\lambda]$ and $\xi[\eta]$.  {\it right:}  Decomposition of the north and east boundaries of $\lzb 0, v_N\rzb $ into   regions   $\cL$ (light gray)  and  $\cD$ (dark gray). A small perturbation of $v_N$ to $w_N$ keeps the endpoint of the $-\xi[\lambda]$ ray from $w_N$ in the interval $[arN^{2/3}, brN^{2/3}]$.}
\label{fig21}
\end{figure}

Recall the old environment of the stationary $\rho$-LPP process whose distribution is denoted by $\mathbb{P}^{\rho}$:
\begin{alignat*}{3}
\omega_z & \sim \Exp(1)  &&\text{for } z & &\in \Z^2_{>0}\\
\omega_{ke_1} & \sim \Exp(1-\rho)  \quad    &&\text{for } k & &\geq 1\\
\omega_{le_2} & \sim \Exp(\rho)   &&\text{for } l & &\geq 1.
\end{alignat*}
The new environment  $\wt\w$ increases the weights in the two intervals on the axes:
\begin{alignat*}{3}
\wt\omega_z & =\omega_z  &&\text{for } z & &\notin \lzb \floor{arN^{2/3}}e_1, \floor{brN^{2/3}}e_1 \rzb \cup  \lzb  \floor{arN^{2/3}}e_2,  \floor{brN^{2/3}}e_2\rzb \\
\wt\omega_{ke_1} & = \frac{1-\rho}{1-\lambda}\,\omega_{ke_1}  \quad  &&\text{for }ke_1 & &\in\lzb \floor{arN^{2/3}}e_1, \floor{brN^{2/3}}e_1\rzb \\
\wt\omega_{le_2} & = \frac{\rho}{\eta}\,\omega_{le_2}  &&\text{for }le_2 & &\in \lzb \floor{arN^{2/3}}e_2, \floor{brN^{2/3}}e_2\rzb.
\end{alignat*}
Denote the  probability measure for the  environment  $\wt\w$   by  $\wt{\mathbb{P}}$. 

The goal is  the  estimate
\beq\label{ezest}
\wt{\mathbb{P}}(A) \equiv \wt{\mathbb{P}} \big\{ \forall z \text{ outside } \lzb  0, v_N \rzb  \text{ we have } |\exittime^{\,0\,\rightarrow \,z}|  \geq \floor{ar N^{2/3}}\big\}  \geq 1/2
\eeq 
where $A$ denotes the event in braces.  
We check  that  this implies Theorem \ref{t:large-ub}. The Cauchy-Schwartz inequality gives
\beq  \label{rnest} 1/2 \leq \wt{\mathbb{P}}(A) = \mathbb{E}^\rho [\ind_A f] \leq \bigl(\mathbb{P}^\rho(A)\bigr)^{1/2} \bigl( \mathbb{E}^\rho[f^2]\bigr)^{1/2}\eeq
where $f=d\wt{\mathbb{P}}/d\P^\rho$ is the Radon-Nikodym derivative. Lemma \ref{l:radnik} gives the  bound  
\beq \label{rnest2}\mathbb{E}^\rho[f^2] \leq e^{Cr^3}\eeq
and then \eqref{rnest} and  \eqref{rnest2} imply the lower bound 
$$\mathbb{P}^\rho(A) \geq \tfrac14e^{-Cr^3}.   
$$ 
To replace the lower bound $\floor{arN^{2/3}}$ in   the event $A$ in \eqref{ezest}  with  $rN^{2/3}$ required for Theorem \ref{t:large-ub},  modify the constant $C$.

To show \eqref{ezest} we  bound its complement: 
\beq\label{ezest2}
\wt{\mathbb{P}} \big\{ \exists z \text{ outside } \lzb  0, v_N \rzb  \text{ such that } |\exittime^{\,0\,\rightarrow \,z}| <\floor{ ar N^{2/3}}\big\}  \le Cr^{-3}. 
\eeq 
We treat  the case $1\leq \exittime^{\,0\,\rightarrow \,z} < \floor{ar N^{2/3}}$  of \eqref{ezest2}.  The same arguments give the analogous bound for the case  $-\floor{arN^{2/3}} < \exittime \leq -1$. 
Start by perturbing the endpoint $v_N$ to a  new point $w_N$ as was done in Lemma \ref{lemmaab}: 
$$w_N = v_N - \floor{\tfrac1{10}(1-\rho)rN^{2/3}}e_1.$$
Break up the northeast boundary of $\lzb  0, v_N\rzb $ into two regions $\cL$ and $\cD$ as in the diagram on the  right of Figure \ref{fig21}. Note that the  $(-\xi[\lambda])$-directed ray started from $w_N$ still goes through    the interval $[arN^{2/3},brN^{2/3}]$. We now require $0< a < \tfrac1{10}(1-\rho)<10\frac{2}{\rho^2} <b$ for $a, b$ in order to apply Lemma \ref{lemmaab} directly in the later part of the proof.

First consider geodesics that hit $\cD$. 
We will show
\be\begin{aligned}\label{thm4.4dark}
    &\wt{\mathbb{P}} \big\{\exists z\in\cD: 1\leq \exittime^{\,0\,\rightarrow \,z} < \floor{ar N^{2/3}}\big\} 
    \leq Cr^{-3}.
\end{aligned}\ee
Let $\sigma_1^{\,0\,\to\, x}$ denote the exit time of the optimal path among those $0\,\to\,x$ paths whose first step is $e_1$. Then we have 
\be\begin{aligned}\label{thm4.4darkre}
    \wt{\mathbb{P}} \big\{\exists z\in\cD: 1\leq \exittime^{\,0\,\rightarrow \,z} < \floor{ar N^{2/3}}\big\}  
    &\leq \wt{\mathbb{P}} \big\{\exists z\in\cD:  \sigma_1^{0\,\rightarrow \,z}  < \floor{ar N^{2/3}}\big\}
    \\
    &\leq \wt{\mathbb{P}} \big\{\sigma_1^{0\,\rightarrow\,w_N}  < \floor{ar N^{2/3}}\big\} .
\end{aligned}\ee
The second inequality comes from the uniqueness of maximizing paths: the maximizing path  to $w_N$ cannot go to the right of a maximizing path  to $\cD$.

The task is to bound $\wt{\mathbb{P}} \big\{\sigma_1^{0\,\rightarrow\,w_N}  < \floor{ar N^{2/3}}\big\}$. Define an environment with  $\mathbb{P}^\lambda$ distribution  by multiplying the   $\mathbb{P}^\rho$  boundary weights  by   $\frac{1-\rho}{1-\lambda}$  on the $e_1$-axis and by   $\frac{\rho}{\lambda}$ on the  $e_2$-axis.   We have now three coupled weight configurations with marginal distributions  $\wt{\mathbb{P}}, \mathbb{P}^\rho$ and $\mathbb{P}^\lambda$.  Denote their joint distribution   by $\mathbb{P}$. Let   $\wt G$, $G^\rho$, and $G^\lambda$  denote the last-passage values under  these three environments. Additionally, let   $\wt G_{0, w_N}(I)$  denote the last-passage value restricted to  paths that  exit through the set $I$.

To obtain 
$$\wt{\mathbb{P}} \big\{\sigma_1^{0\,\rightarrow\,w_N}  < \floor{ar N^{2/3}}\big\} \leq Cr^{-3}$$
we show
\beq \label{welowerbound}\mathbb{P} \bigl\{\wt G_{0, w_N}(\lzb e_1, \floor{arN^{2/3}-1}e_1\rzb) < \wt G_{0, w_N}(\lzb \floor{arN^{2/3}}e_1, \floor{brN^{2/3}}e_1\rzb)\bigr\}\geq 1-Cr^{-3}.\eeq
By  Lemma \ref{lemmaab} there exists an event $A_1$ with $\mathbb{P}(A_1)\geq 1-e^{-Cr^3}$ such that on this event the geodesic of $G^\lambda_{0, w_N}$ exits inside $\lzb \floor{arN^{2/3}}e_1, \floor{brN^{2/3}}e_1\rzb$.
The following equality holds on $A_1$: 
$$\wt G_{0, w_N}(\lzb \floor{arN^{2/3}}e_1, \floor{brN^{2/3}}e_1\rzb) + \sum_{k=1}^{\floor{arN^{2/3}-1}} \bigg(\frac{1-\rho}{1-\lambda} - 1\bigg) \omega_{ke_1}  =  G^\lambda_{0, w_N}.$$
Together with the fact that
$$\wt G_{0, w_N}(\lzb e_1, \floor{arN^{2/3}-1}e_1\rzb) \leq G^\rho_{0, w_N},$$
the probability in \eqref{welowerbound} can be lower bounded as 
\beq \label{eq1}
\eqref{welowerbound} \geq \mathbb{P} \biggl(\biggl\{G^\rho_{0, w_N} < G^\lambda_{0, w_N} - \sum_{k=1}^{\floor{arN^{2/3}-1}} \bigg(\frac{1-\rho}{1-\lambda} - 1\bigg) \omega_{ke_1} \biggr\}\cap A_1\biggr).
\eeq
Up to a $\rho$-dependent constant  
\beq \label{ldbdry}\mathbb{E}\biggl[\;\sum_{k=1}^{\floor{arN^{2/3}-1}} \bigg(\frac{1-\rho}{1-\lambda} - 1\bigg) \omega_{ke_1} \biggr] \sim a r^2 N^{1/3},\eeq
and recall that  the parameter $a$ can be fixed arbitrarily small. On the other hand, a computation in eqn.~(5.53)   in the arXiv version of \cite{CGMlecture} with $\kappa_N^1 = -\floor{\tfrac1{10}(1-\rho)rN^{2/3}}$ and $\kappa_N^2 = 0$  gives 
\beq \label{diffexp} \mathbb{E}[G^\lambda_{0, w_N}] - \mathbb{E}[ G^\rho_{0, w_N}] \geq c_1r^2N^{1/3}
\eeq where $c_1$ is another  $\rho$-dependent constant.  Hence for small $a>0$ the event inside the  braces in \eqref{eq1}  should occur with high probability.  This we now demonstrate.
 
 Let 
 $$A_2=\{ G^\lambda_{0, w_N} > \mathbb{E}[ G^\rho_{0, w_N}] + \tfrac12{c_1}r^2N^{1/3}\}.$$
We  show that $\mathbb{P}(A_2) \geq 1-Cr^{-3}$.  First we estimate the variance $\Var[G^\rho_{0, w_N}] $.    The first equality below is Theorem 5.6 in the arXiv version of \cite{CGMlecture}:  
\begin{align}
\begin{split}\label{boundvar}
\Var[G^\rho_{0, w_N}] &= -\frac{\floor{(1-\rho)^2 N} - \floor{\tfrac1{10}(1-\rho)rN^{2/3}}}{(1-\rho)^2} + \frac{\floor{\rho^2 N} }{\rho^2} + \frac{2}{1-\rho}\mathbb{E}\bigg[\sum_{k=1}^{0\,\vee\,\exittime^{\,0\,\rightarrow \,w_N}} \w^\rho_{ke_1}\bigg]\\
& \leq CrN^{2/3} + \frac{2}{1-\rho}\mathbb{E}\bigg[\sum_{k=1}^{0\,\vee\,\exittime^{\,0\,\rightarrow \,v_N}} \w^\rho_{ke_1}\bigg] 
\leq CrN^{2/3} + C' N^{2/3}.
\end{split}
\end{align}
Shifting the endpoint from $w_N$ back to $v_N$ inside the expectations increases the expected value because $\exittime^{\,0\,\rightarrow \,w_N} \leq \exittime^{\,0\,\rightarrow \,v_N}$ almost surely. This gives the inequality between the two expectations. The last expectation   is of order $N^{2/3}$ as shown through Lemma 5.8 and Proposition 5.9 in the arXiv version of \cite{CGMlecture}.  Now we can bound:  
\begin{align*}
\mathbb{P}(A_2^c)&=\mathbb{P}\bigl(G^\lambda_{0, w_N} \leq \mathbb{E}[ G^\rho_{0, w_N}] + \frac{c_1}{2}r^2N^{1/3}\bigr) \\
(\text{using \eqref{diffexp}}) \qquad &\leq \mathbb{P}(G^\lambda_{0, w_N} \leq \mathbb{E}[ G^\lambda_{0, w_N}] - \frac{c_1}{2}r^2N^{1/3})\\&
\leq \frac{c_2}{r^4 N^{2/3}}\Var[G^\lambda_{0, w_N}]\\
(\text{Lemma 5.7, arXiv version of \cite{CGMlecture}})\qquad &\leq \frac{c_2}{r^4 N^{2/3}}(\Var[G^\rho_{0, w_N}] + c_3rN^{-1/3} (1-\rho)^2 N)
\leq C r^{-3}.
\end{align*}
For the last inequality we take $r\ge C'$ from the last line of \eqref{boundvar}. 
We have  the further  lower bound 
\beq\label{eq2} \eqref{eq1} \geq
\mathbb{P} \biggl(\biggl\{G^\rho_{0, w_N} < \mathbb{E}[ G^\rho_{0, w_N}] + \frac{c_1}{2}r^2N^{1/3} - \sum_{k=1}^{\floor{arN^{2/3}-1}} \bigg(\frac{1-\rho}{1-\lambda} - 1\bigg) \omega_{ke_1} \biggr\}\cap A_1 \cap A_2\biggr).\eeq
We handle the i.i.d.~sum above using large deviation of i.i.d.~exponential random variables. Let $I(\cdot)$ denote the Cram\'er rate function of the  $\Exp(1-\rho)$ distribution.  Then 
$$\mathbb{P}\bigg\{\left(\frac{1-\rho}{1-\lambda} - 1 \right) \sum_{k=1}^{\floor{arN^{2/3}-1}}  \omega_{ke_1} > \frac{c_1}{4} r^2 N^{1/3} \bigg\}  \le  e^{-arN^{2/3} I(c_5/a)} \leq e^{-c_6rN^{2/3}} $$ 
where $c_5$ is a certain constant, and for small enough $a>0$, $I(c_5/a) \ge c_6/a$.    Thus the event 
$$A_3=\biggl\{ \biggl(\frac{1-\rho}{1-\lambda} - 1 \biggr) \sum_{k=1}^{\floor{arN^{2/3}-1}}  \omega_{ke_1} \leq \frac{c_1}{4}r^2 N^{1/3}\biggr\} $$
satisfies $\mathbb{P}(A_3)\geq 1-e^{-c_6rN^{2/3}}$.  

Continuing the lower bound,
\beq \label{eq3}\eqref{eq2} \geq \mathbb{P} \left(\left\{G^\rho_{0, w_N} < \mathbb{E}[ G^\rho_{0, w_N}] + \frac{c_1}{4}r^2N^{1/3}  \right\}\cap A_1 \cap A_2 \cap A_3\right).\eeq
The variance bound from \eqref{boundvar} gives 
$$\mathbb{P}\left\{G^\rho_{0, w_N} - \mathbb{E}[ G^\rho_{0, w_N}] \geq  \frac{c_1}{4}r^2N^{1/3}  \right\} \leq \frac{c_2}{r^4 N^{2/3}}\Var[G^\rho_{0, w_N}] \leq Cr^{-3}.$$ 
All four events inside the probability   in  \eqref{eq3} have probability at least   $1-Cr^{-3}$.  We have verified  the estimate \eqref{welowerbound} and thereby completed the argument for the dark gray region $\mathcal{D}$.

Consider the light gray region $\cL$.  
The switch from $\wt{\mathbb{P}}$ to $\mathbb{P}^\rho$ decreases certain  boundary weights outside the range  $\lzb e_1, \floor{ar N^{2/3}-1}e_1\rzb$ and gives the first inequality below. 
\begin{align}\label{thm4.4light}
\begin{split}
    &\wt{\mathbb{P}} \big\{\exists z\in\cL: 1\leq \exittime^{\,0\,\rightarrow \,z}  < \floor{ar N^{2/3}}\big\}  \leq 
    {\mathbb{P}}^\rho \big\{\exists z\in\cL: 1\leq \exittime^{\,0\,\rightarrow \,z}  < \floor{ar N^{2/3}}\big\}\\[2pt] 
    &\qquad \leq {\mathbb{P}}^\rho \big\{\exists z\in\cL:   \exittime^{\,0\,\rightarrow \,z}  \ge 1 \big\}
    \leq {\mathbb{P}}^\rho \big\{  \exittime^{\,0\,\rightarrow\,w_N}\ge 1\big\}
    \leq e^{-Cr^3}. 
    \end{split}
\end{align}
  The last inequality follows from bound  \eqref{weprove2} in  Corollary \ref{sec3cor}.

Combining  \eqref{thm4.4dark} and  \eqref{thm4.4light} gives 
$$
\wt{\mathbb{P}} \big\{ \exists z \text{ outside } \lzb  0, v_N \rzb  \text{ such that } 1\leq \exittime^{\,0\,\rightarrow \,z} <\floor {ar N^{2/3}}\big\}  \le Cr^{-3}.
$$
The proof is complete. 
\end{proof}

The next theorem is the main intermediate result towards the  lower bound of Theorem \ref{t:main-small}.

\begin{figure}[t]
\captionsetup{width=0.8\textwidth}
\begin{center}
 
\begin{tikzpicture}[>=latex, scale=1]

\draw[color = gray, line width = 0.3mm, ->] (0,0)--(0,4);
\draw[color = gray, line width = 0.3mm, ->] (0,0)--(4,0);

\draw[color = black, line width = 0.5mm] (0,0)--(0,3);
\draw[color = black, line width = 0.5mm] (0,0)--(3,0);

\draw[color = black, line width = 0.7mm] (0.5,-0.1)--(0.5,0.1);
\draw[color = black, line width = 0.7mm] (1,-0.1)--(1,0.1);
\draw[color = black, line width = 0.7mm] (1.5,-0.1)--(1.5,0.1);
\draw[color = black, line width = 0.7mm] (2,-0.1)--(2,0.1);
\draw[color = black, line width = 0.7mm] (2.5,-0.1)--(2.5,0.1);

\draw[color = black, line width = 0.7mm] (-0.1,0.5)--(0.1,0.5);
\draw[color = black, line width = 0.7mm] (-0.1,1)--(0.1,1);
\draw[color = black, line width = 0.7mm] (-0.1,1.5)--(0.1,1.5);
\draw[color = black, line width = 0.7mm] (-0.1,2)--(0.1,2);
\draw[color = black, line width = 0.7mm] (-0.1,2.5)--(0.1,2.5);

\node at (2,1) {\small $p_iN^{2/3}$};
\draw[color = black, line width = 0.3mm, ->] (1.4,0.7)--(1.1,0.2);

\fill[color=white] (3,0)circle(1.7mm); 
\draw[ fill=lightgray](3,0)circle(1mm);
\node at (3,-0.4) {\small $r_0N^{2/3}$};

\fill[color=white] (0,3)circle(1.7mm); 
\draw[ fill=lightgray](0,3)circle(1mm);
\node at (-0.7,3) {\small $r_0N^{2/3}$};

\draw[ fill=white ](0,0)circle(1mm);
\node at (-0.1,-0.3) {\small $(0,0)$};

\end{tikzpicture}
 
\end{center}
\caption{\small  Partition of the range of  $\exittime^{\,0\,\rightarrow\,v_N}$ in the event in \eqref{exit47}.   The origin is not necessarily a partition point.}
\label{fig13}
\end{figure}
\begin{theorem} \label{t:small-lb}   For each  $0<\rho<1$ 
there exist finite positive constants $\delta_0(\rho)$, $C(\rho)$ and  $N_0(\rho)$  such that 
for all  $N\geq N_0(\rho)$   and $ N^{-2/3}\leq\delta\le \delta_0(\rho)$, 
\[ \mathbb{P}^\rho \big\{ \exists z \text{ outside } \lzb  0, v_N \rzb  \text{ such that } |\exittime^{\,0\,\rightarrow \,z}| \leq \delta N^{2/3}   \big\}\geq C(\rho) \delta. \]
\end{theorem}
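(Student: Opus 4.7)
Plan.

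My plan is to reduce the theorem to a single-endpoint statement and establish it via a change-of-measure argument. Since $v_N + e_1 \notin \lzb 0, v_N \rzb$, it suffices to show
\[
\mathbb{P}^\rho\bigl\{\exittime^{0\to v_N + e_1}\in [1,\lfloor\delta N^{2/3}\rfloor]\bigr\} \;\geq\; C(\rho)\,\delta,
\]
since the event in the theorem then holds with $z = v_N + e_1$. (A symmetric argument using $v_N + e_2$ and negative exit times handles the other sign.) The requirement $\delta \geq N^{-2/3}$ is natural here, as otherwise the target window $[1,\lfloor\delta N^{2/3}\rfloor]$ is empty.

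To establish the single-endpoint bound I carry out a tilting argument patterned on the proof of Theorem \ref{t:large-ub}, but with a small perturbation $\lambda = \rho + c(\rho)\,\delta\, N^{-1/3}$ instead of the moderate perturbation $\rho + rN^{-1/3}$. Under $\mathbb{P}^\lambda$, the characteristic direction $\xi[\lambda]$ rotates so that along the horizontal line through $v_N + e_1$ the characteristic endpoint is displaced by $\Theta(\delta N^{2/3})$ from $v_N + e_1$, placing the typical exit time for $v_N+e_1$ in the middle of the target window $[1,\lfloor\delta N^{2/3}\rfloor]$. Combining Theorem \ref{t:exit1} at parameter $\lambda$ with Corollary \ref{sec3cor} (used to rule out macroscopic deviations that would place the exit well outside the target window) yields
\[
\mathbb{P}^\lambda\bigl\{\exittime^{0\to v_N+e_1}\in[1,\lfloor\delta N^{2/3}\rfloor]\bigr\} \;\geq\; c_1(\rho).
\]
Transferring back to $\mathbb{P}^\rho$ uses the Radon-Nikodym derivative $f=d\mathbb{P}^\lambda/d\mathbb{P}^\rho$. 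Because the tilt involves only the boundary weights on the axes, Lemma \ref{l:radnik} gives a moment bound of the form $\mathbb{E}^\rho[f^2]\le \exp\bigl(C(\lambda-\rho)^2 L\bigr)$ where $L$ is the length of the segment to which the tilt is restricted. Cauchy-Schwarz then gives
\[
\mathbb{P}^\rho(A)\ \geq\ \frac{(\mathbb{P}^\lambda(A))^2}{\mathbb{E}^\rho[f^2]}.
\]

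The main obstacle is balancing the tilt so that Cauchy-Schwarz outputs the correct $\delta$-scaling rather than something worse. The stationary LPP exit time still has $\Theta(N^{2/3})$ fluctuations under $\mathbb{P}^\lambda$, so one cannot hope for $\mathbb{P}^\lambda(A) = \Theta(1)$ on a single attempt; making this step produce a lower bound of the right order requires either (i) localizing the tilt to a boundary segment of length $L = \Theta(N^{2/3})$ (which keeps $\mathbb{E}^\rho[f^2]=O(1)$ since $(\lambda-\rho)^2 L = O(\delta^2)$) and then exploiting the exit-time estimates from Theorems \ref{t:exit1} and Corollary \ref{sec3cor} to argue that the tilted-measure event has probability at least of order $\sqrt{\delta}$, or (ii) arguing indirectly by a second-moment/Paley-Zygmund bound on the number of indices $k$ with $\exittime^{0\to v_N+ke_1}\in[1,\lfloor\delta N^{2/3}\rfloor]$, using that by Lemma \ref{sec3lem1} this count is monotonic in a structured way. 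Either variant leans heavily on the exit-time tail estimates developed earlier in Section \ref{s:exit-pf} to restrict the effective support of the relevant randomness to a window of length $\Theta(N^{2/3})$ on the axes; handling this localization carefully is the technical heart of the argument.
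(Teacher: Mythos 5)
Your proposal takes a genuinely different route from the paper, and unfortunately the central step does not work as stated. You try to reduce the theorem to the single-endpoint bound $\mathbb{P}^\rho\{\exittime^{\,0\to v_N+e_1}\in[1,\lfloor\delta N^{2/3}\rfloor]\}\ge C(\rho)\delta$, and to establish that via tilting to $\lambda=\rho+c\delta N^{-1/3}$ followed by Cauchy--Schwarz. But the tilt is of size $O(\delta N^{-1/3})$, which shifts the characteristic direction (and hence the typical exit location) by only $O(\delta N^{2/3})$ --- far less than the $\Theta(N^{2/3})$ fluctuation of the exit time. Consequently $\mathbb{P}^\lambda(A)$ is itself of order $\delta$, not of order $1$ or even $\sqrt\delta$, and Cauchy--Schwarz outputs $\mathbb{P}^\rho(A)\gtrsim(\mathbb{P}^\lambda(A))^2/\mathbb{E}^\rho[f^2]\approx\delta^2$, which is too weak. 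You recognize this obstacle, but neither workaround you sketch resolves it: option (i) asserts $\mathbb{P}^\lambda(A)\gtrsim\sqrt\delta$ without justification (and there is no reason to expect it with so small a tilt), and option (ii) leaves the second-moment computation unspecified. The change-of-measure mechanism in Theorem~\ref{t:large-ub} works there precisely because the tilt $rN^{-1/3}$ with $r$ \emph{large} moves the characteristic exit point by $rN^{2/3}\gg N^{2/3}$, making the tilted event essentially certain; at scale $\delta$ this leverage disappears.

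The paper sidesteps the single-endpoint difficulty entirely: it never proves that the exit time hits the specific window $[1,\delta N^{2/3}]$ with probability $\gtrsim\delta$. Instead it uses Theorem~\ref{t:exit1} to confine $\exittime^{\,0\to v_N'}$ to $[-r_0N^{2/3},r_0N^{2/3}]$ with probability $\ge 1/2$, partitions this range into $O(1/\delta)$ subintervals of width $\delta N^{2/3}$, and invokes pigeonhole to find one subinterval $[p_{i^\star}N^{2/3},p_{i^\star+1}N^{2/3}]$ carrying mass $\ge C(\rho)\delta$. Since one cannot control \emph{which} subinterval this is, the proof compensates by varying the endpoint: via Lemma~\ref{sec3lem2} (the nested LPP / translation-invariance identity), an exit time in $[p_{i^\star}N^{2/3},p_{i^\star+1}N^{2/3}]$ for the geodesic to $v_N'$ is equivalent to an exit time in $[1,\delta N^{2/3}]$ for the geodesic to a shifted endpoint still lying on the north/east boundary of $\lzb 0,v_N\rzb$. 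This exploits the ``$\exists z$'' quantifier in the statement, which your reduction to a fixed endpoint $v_N+e_1$ discards. If you want to repair your approach you would need to reintroduce the freedom to vary the endpoint; once you do, the pigeonhole-plus-translation argument is both simpler and sharper than a change of measure here.
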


\begin{proof}  

Utilizing  Theorem \ref{t:exit1}, fix constants $r_0$, $C_0$ and  $N_0$ (depending on $\rho$)  such that, for $N\geq N_0$,  
\beq \label{exit47} \mathbb{P}^\rho \bigl\{ |\exittime^{\,0\,\rightarrow\,v_N+e_1+e_2} | \leq r_0 N^{2/3}\bigr\}\geq1/2.\eeq
Set   $v_N'=v_N+e_1+e_2 $.  
Given small $\delta> N^{-2/3}$,  partition $[-r_0,r_0]$ as 
\[    -r_0=p_0   < p_1 < \dotsm  <  p_{\floor{\frac{2r_0}{\delta}}}< p_{\floor{\frac{2r_0}{\delta}}+ 1} = r_0  \]
with mesh $p_{i+1}-p_i\le \delta$. 
See Figure \ref{fig13}.
By $(\ref{exit47})$ 
 there exists an integer $i^\star\in[ 0, \floor{\frac{2r_0}{\delta}}]$ such that 
\beq \label{istar}\mathbb{P}^\rho \big\{   p_{i^\star} N^{2/3} \le \exittime^{\,0\,\rightarrow\,v_N'} \le  p_{i^\star+1}N^{2/3} \big\} \geq \frac{\tfrac{1}{2} \delta}{2r_0} =  C(\rho) \delta.\eeq

We cannot control the exact location of $i^\star$. We compensate by varying the endpoint around $v_N'$. 
Let \[  A_N=\lzb v_N' -r_0 N^{2/3}e_1, v_N'\rzb \cup \lzb v_N'-r_0 N^{2/3}e_2, v_N'\rzb \]
denote   the set of lattice points on the boundary  of the rectangle $\lzb  0, v_N'\rzb $ within distance  $r_0 N^{2/3}$ of the upper right corner $v_N'$.   We claim  that for any integer $i\in [0,  \floor{\frac{2r_0}{\delta}}]$, 
\begin{align}  
\label{alli}
\mathbb{P}^\rho \bigl\{\exists z \in A_N   : |\exittime^{\,0\,\rightarrow\,z}|\leq \delta N^{2/3}\bigr\} 
 \geq \mathbb{P}^\rho \bigl\{p_{i} N^{2/3} \le \exittime^{\,0\,\rightarrow\,v_N'} \le  p_{i+1}N^{2/3}  \bigr\}.
\end{align}
Then bounds  \eqref{istar} and  \eqref{alli} imply
\beq\label{newest}\mathbb{P}^\rho \bigl\{\exists z \in A_N   : |\exittime^{\,0\,\rightarrow\,z}| \leq \delta N^{2/3}\bigr\} \geq C(\rho)\delta,\eeq
and Theorem \ref{t:small-lb}  directly follows from \eqref{newest}.

\begin{figure}[t]
\captionsetup{width=0.8\textwidth}
\begin{center}
 
\begin{tikzpicture}[>=latex, scale=1.2]

\draw[color = gray, line width = 0.3mm, ->] (0,0)--(0,4);
\draw[color = gray, line width = 0.3mm] (0,0)-- (0.7,0);
\draw[color = black, line width = 0.5mm, ->] (0.7,0)-- (5,0);

\draw[line width = 0.5mm,  ->] (0.7,0)--(0.7,4);

\draw[color=gray, dotted, line width = 1mm] (0,0) -- (0.7,0);

\draw[color = darkgray,  line width = 0.5mm,] (3.7-2.3,3)--(3.7,3) -- (3.7,3 -2.3);

\draw[color=black, dotted, line width = 1mm]  (0.7,0)--(1.1, 0)--(1.1,0.5) -- (1.5,0.5) -- (1.5,1)--(2,1)--(2,1.5)--(2.5,1.5) -- (2.5,2) -- (3,2) -- (3,3) ;
\draw [decorate,decoration={brace,amplitude=10pt}, xshift=0pt,yshift=0pt]
(3.8,3) -- (3.8,0.7) ;
\node at (4.6,1.9) {\small $r_0 N^{2/3} $};

\draw[color=gray, dotted, line width = 0.5mm, ->]  (0,0) -- (4,4);

\draw[color=black , dotted, line width = 0.5mm, ->]  (0.7,0) -- (4.7,4);

\node at (4.4,4.3 ) {\small $\xi[\rho]$};

\draw[line width = 1.5mm] (0.7,-0.1)--(0.7,0.1);
\draw[line width = 1.5mm] (1.5,-0.1)--(1.5,0.1);

\fill[color=white] (2.3,0)circle(1.7mm); 
\draw[ fill=lightgray](2.3,0)circle(1mm);
\node at (2.5,-0.4) {\small $r_0N^{2/3}$};

\fill[color=white] (3,3)circle(1.7mm); 
\draw[ fill=lightgray](3,3)circle(1mm);

\fill[color=white] (3.7,3)circle(1.7mm); 
\draw[ fill=black](3.7,3)circle(1mm);

\draw[ fill=white ](0,0)circle(1mm);
\node at (-0.1,-0.3) {\small $(0,0)$};

\end{tikzpicture}
 
\end{center}
\caption{\small The setup for proving $(\ref{alli})$.}
\label{fig14}
\end{figure}

 
We prove claim \eqref{alli}. If $p_i\le 0 \le  p_{i+1}$,  $(\ref{alli})$ is immediate.   We argue the case $p_{i+1} > p_i > 0$, the other one being analogous.  
 Set  $z=(\floor{p_i N^{2/3}} - 1)e_1$ and 
 apply Lemma \ref{sec3lem2} to the LPP process  $G^{(0),\rho}_{z, \,\bbullet}$.  Then 
  \begin{align}
\nn \mathbb{P}^\rho \bigl\{p_{i} N^{2/3} \le \exittime^{\,0\,\rightarrow\,v_N'} \le  p_{i+1}N^{2/3}  \bigr\} 
 \; &\le  \; \mathbb{P}^\rho \bigl\{1 \leq \exittime^{\,0 \,\rightarrow\,v_N'-(\floor{p_i N^{2/3}} - 1)e_1} \leq \delta N^{2/3}\bigr\} \\
\nn & \leq \; \mathbb{P}^\rho \bigl\{\exists z \in A_N   : |\exittime^{\,0\,\rightarrow\,z}| \leq \delta N^{2/3}\bigr\}.
\end{align}
\end{proof}

The remainder of this section proves the main intermediate result towards the upper bound of Theorem \ref{t:main-small}.  
It quantifies the lower bound on the exit point on the scale $N^{2/3}$. This  strengthens the estimates accessible without integrable probability, for previously no quantification was  attained (Theorem 2.2(b) in \cite{cuberoot}). The proof is based on the ideas from the recent work of \cite {balzs2019nonexistence, balzs2020local}.

\begin{theorem} \label{t:small-ub} For each  $0<\rho<1$ 
there exist finite positive constants $\delta_0(\rho)$, $C(\rho)$ and  $N_0(\rho)$  such that 
for all $ 0<\delta\le \delta_0(\rho)$  and $N\geq N_0(\rho)$, 
\[  \mathbb{P}^\rho \Big\{ \exists z \text{ outside } \lzb  0, v_N \rzb  \text{ such that } |\exittime^{\,0\,\rightarrow \,z}|\leq \delta N^{2/3}   \Big\} \leq C|\log\delta\hspace{0.5pt}|^{2/3} \delta.
\]
\end{theorem}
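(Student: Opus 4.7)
The proof will follow a perturbation argument based on the Busemann-function coupling of Proposition \ref{indbuse}, adapting ideas from \cite{balzs2019nonexistence, balzs2020local}.

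First, by the $\rho \leftrightarrow 1-\rho$ symmetry that swaps the two coordinate axes, it suffices to bound
\[
E^+ \;=\; \bigl\{\exists z \text{ outside } \lzb 0, v_N\rzb :\ 1 \le \exittime^{0\to z} \le \delta N^{2/3}\bigr\}.
\]
Planar monotonicity (Lemma \ref{sec3lem1}), combined with the tail estimates of Corollary \ref{sec3cor} applied to endpoints sent far outside the rectangle, reduces $E^+$ (up to a negligible $e^{-Cr^3}$-type error) to a finite union of events of the form $\{1 \le \exittime^{0 \to w} \le \delta N^{2/3}\}$ for lattice points $w$ on or just past the NE boundary of $\lzb 0, v_N\rzb$.

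Next, introduce a perturbation $s > 0$ (to be optimized) and set $\lambda = \rho + s$. Applying Proposition \ref{indbuse} with base point on the $e_1$-axis, couple $B^\rho$ and $B^\lambda$ on a common probability space so that the $\rho$-increments on the negative $e_1$-axis are independent of the $\lambda$-increments on the positive $e_1$-axis. Under $\mathbb{P}^\lambda$, the characteristic direction $\xi[\lambda]$ lies to the upper-left of $\xi[\rho]$ by $\Theta(s)$, placing $v_N$ east of $N\xi[\lambda]$ by horizontal distance $\sim (1-\rho)\,sN$. A straightforward variant of Corollary \ref{sec3cor} then yields
\[
\mathbb{P}^\lambda\bigl\{\exittime^{0\to w} \le \delta N^{2/3}\bigr\} \;\le\; \exp\!\bigl(-C(sN^{1/3})^3\bigr),
\]
provided $sN^{1/3}$ is sufficiently larger than $\delta$.

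Combining the Busemann coupling with this large-deviation estimate, the plan is to establish an inequality of the schematic form
\[
\mathbb{P}^\rho\bigl\{1\le\exittime^{0\to w}\le\delta N^{2/3}\bigr\} \;\le\; C(sN^{1/3})^{2}\,\delta \;+\; \exp\!\bigl(-C'(sN^{1/3})^{3}\bigr),
\]
where the first term is the change-of-measure cost on the short initial segment of the $e_1$-axis of length $\Theta(\delta N^{2/3})$ (a polynomial second-moment bound in the spirit of Lemma \ref{l:radnik}) and the second is the $\mathbb{P}^\lambda$ tail bound above. Choosing $sN^{1/3} \sim |\log\delta|^{1/3}$ equalizes the two terms up to absorbable $\delta$-powers, yielding the required $C|\log\delta|^{2/3}\delta$.

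The main technical obstacle is obtaining the displayed inequality with the \emph{polynomial} $(sN^{1/3})^{2}$ prefactor---rather than the \emph{exponential} $e^{C(sN^{1/3})^{2}}$ prefactor that a naive tilt on the full $e_1$-axis would produce (as in the proof of Theorem \ref{t:large-ub}). Proposition \ref{indbuse} is what enables this improvement: by localizing the change of measure to the short segment $\lzb e_1,\,\floor{\delta N^{2/3}}e_1\rzb$ and keeping the bulk and $e_2$-axis weights unchanged, only the increments on that axis segment need to be re-weighted. A secondary technical issue is the monotonicity-based reduction in the first step: one needs to cover all $z$ outside the rectangle, including those far NW or far SE, using only a bounded collection of endpoints $w$ together with Corollary \ref{sec3cor}-type estimates for the stray cases.
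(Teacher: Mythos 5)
Your plan identifies the right ingredients---the Busemann coupling of Proposition~\ref{indbuse}, a perturbation $\lambda=\rho+s$ with $sN^{1/3}\sim|\log\delta|^{1/3}$, and a target bound of the schematic form $C\delta(sN^{1/3})^2+e^{-C'(sN^{1/3})^3}$---but the mechanism you propose for producing the polynomial prefactor $\delta(sN^{1/3})^2$ does not work, and this is the crux of the theorem.

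You want to obtain $\mathbb{P}^\rho\{1\le\exittime^{0\to w}\le\delta N^{2/3}\}\le C\delta(sN^{1/3})^2+e^{-C'(sN^{1/3})^3}$ via a Radon--Nikodym argument localized to the segment $\lzb e_1,\floor{\delta N^{2/3}}e_1\rzb$. But any version of this argument---Cauchy--Schwarz with a second-moment bound as in Lemma~\ref{l:radnik}, or Pinsker/total-variation---produces a bound in which the tilting cost enters either multiplicatively (as $e^{c\delta(sN^{1/3})^2}\cdot(\mathbb{P}^\lambda(A))^{1/2}$, hence $\approx\delta^{1/2}$) or additively as $\sqrt{\delta(sN^{1/3})^2}\approx\sqrt{\delta}\,|\log\delta|^{1/3}$. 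Neither route produces the \emph{linear-in-$\delta$} prefactor $\delta(sN^{1/3})^2$; the exponent of $\delta$ is off by a factor of two, and that discrepancy cannot be optimized away by re-choosing $s$. In other words, a change of measure is simply the wrong tool here; the paper's lower-bound proof (Theorem~\ref{t:large-ub}) uses change of measure precisely because it is trying to show an event is \emph{not too unlikely}, while the upper bound needs a different mechanism.

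The mechanism the paper actually uses is a \emph{pointwise decomposition over exit positions} combined with a \emph{random-walk-maximum} argument. One fixes a candidate exit site $t_0\in\lzb1,\floor{\delta N^{2/3}}\rzb$ and observes that the event $\{\exittime^{\cdot\to z}=t_0\}$ forces a two-sided random walk $Z^{z,t_0}_n$ (built from the difference of boundary increments and bulk increments $\widetilde{I}^z_n$ along the line $y=1$) to attain its unique maximum at $n=t_0$. Planar monotonicity (Lemma~\ref{sec3lem1}) sandwiches $Z^{z,t_0}$ for all $z\in\cD$ between two comparison walks built from $\widetilde{I}^{w_N^\pm}$, which are in turn sandwiched on a high-probability event (Lemma~\ref{lm:A78}) by walks built from Busemann increments $I^\lambda$ and $I^\eta$ with $\eta=\rho-rN^{-1/3}<\rho<\lambda=\rho+rN^{-1/3}$. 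Proposition~\ref{indbuse} is then invoked to make the $\lambda$-walk on $(t_0,\cdot]$ and the $\eta$-walk on $[\cdot,t_0)$ independent, so the probability that both stay negative \emph{factors}. Each factor is a one-sided ballot-type probability which the random-walk estimate Lemma~\ref{rw} bounds by $C(\rho)\,rN^{-1/3}$. Multiplying gives $(rN^{-1/3})^2$ for each fixed $t_0$, and a union bound over the $\Theta(\delta N^{2/3})$ values of $t_0$ gives $C\delta r^2$. That is where $\delta(sN^{1/3})^2$ actually comes from, and it has nothing to do with a tilted second moment. A secondary remark: your first reduction (covering all $z$ outside $\lzb0,v_N\rzb$ by finitely many endpoints $w$) is also more delicate than stated---the dark segment $\cD$ contains $\Theta(rN^{2/3})$ candidate endpoints, and a naive union bound over them is too lossy; the sandwich argument above is precisely what collapses the ``$\exists z\in\cD$'' to the two extremal endpoints $w_N^\pm$. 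Finally, the paper also embeds the base point inside a larger stationary process anchored at $-\floor{rN^{2/3}}e_1$ before running this argument; without some such re-basing, the two-sided walk you need does not have enough room on the negative side.
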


\begin{figure}[t]
\captionsetup{width=0.8\textwidth}
 \begin{center}
\begin{tikzpicture}[>=latex, scale=1]
\draw[line width = 0.3mm, ->] (0,0)--(0,4);
\draw[line width = 0.3mm, ->] (0,0)--(5,0);

\draw[color=lightgray, line width = 1mm] (0,3) -- (2.5, 3);
\draw[ color = darkgray, line width = 1mm] (2.5,3) -- (4, 3) -- (4,1.5);
\draw[color=lightgray, line width = 1mm] (4,1.5)--(4,0);

\draw[loosely dotted, line width = 0.3mm, ->] (0,0) -- (5, 3+3/4);

\draw [decorate,decoration={brace,amplitude=6pt,raise=0ex}]
  (2.5,3.1) -- (4,3.1);

\node at (3.4,3.6) {\small ${qrN^{2/3}}$};

\fill[color=white] (4,1.5)circle(1.7mm);
\draw[ fill=lightgray](4,1.5)circle(1mm);
\node at (4.4,1.5) {\small $w_N^-$};

\fill[color=white] (2.5,3)circle(1.7mm);
\draw[ fill=lightgray](2.5,3)circle(1mm);
\node at (2.4,2.6) {\small $w_N^+$};
\fill[color=white] (0,0)circle(1.7mm);
\draw[ fill=white](0,0)circle(1mm);
\node at (-0.6,0) {\small $(0,0)$};

\fill[color=white] (4,3)circle(1.7mm);
\draw[ fill=lightgray](4,3)circle(1mm);
\node at (4.4,2.8) {\small $v_N$};

\node at (5.5, 4) {$\xi[\rho]$};

\node at (4.4, 2.2){$\cD$};
\node at (1.2, 3.3){$\mathcal{L}^+$};

\node at (4.4, 1){$\mathcal{L}^-$};

\end{tikzpicture}

\end{center}
\caption{\small The north and east boundaries of $\lzb 0, v_N\rzb $ are decomposed into  $\cL^\pm$ (light gray) and $\cD$ (dark gray). The parameter $q$ is less than some small constant that depends only on $\rho$.}
\label{fig9}
\end{figure}

\begin{proof} 
We  prove the case $1\leq \exittime \leq \delta N^{2/3}$.  The proof for $-\delta N^{2/3}\leq \exittime \leq -1$ is similar. It suffices to look at the north and  east boundaries of $\lzb 0, v_N\rzb $  since any geodesic from $0$ to   outside of $\lzb 0, v_N\rzb $   crosses the boundary.   Decompose these boundaries into three parts $\cD$ and $\cL^\pm$  as in Figure \ref{fig9}, with
 $$ w_N^+ = v_N - \floor{qrN^{2/3}}e_1 \quad\text{and}\quad 
   w_N^- = v_N - \floor{qrN^{2/3}}e_2$$
where $q$ is a small positive constant chosen later, and $r = \big(|\log\delta\hspace{0.5pt}|/C)^{1/3}$ where $C$ is the constant  in the right-hand  side of the estimate in Theorem \ref{t:exit1}. The dark gray set $\cD$ comprises the vertices  between  $w_N^+$ and $w_N^-$ in the north-east corner of the boundary of the  rectangle $\lzb0, v_N \rzb$.

Consider first, the dark gray portion $\cD$.  Take 
  $0<\delta \leq \delta_0 = \tfrac{9}{10}$, where the bound   $\tfrac{9}{10}$ may be decreased later  in the proof. Our goal is to estimate 
\beq \mathbb{P}^\rho \{\exists z \in \text{$\cD$ such that } 1\leq \exittime^{\,0\,\to\, z}\leq \delta N^{2/3}\}. \label{goal8}
\eeq 
To do this, we place  the stationary LPP process on $0 + \mathbb{Z}^2_{\geq 0}$ as a nested  LPP process   inside a larger stationary LPP process on the quadrant  $-\floor{rN^{2/3}}e_1 + \mathbb{Z}^2_{\geq 0}$, as shown in Figure \ref{nest1}.
\begin{figure}[t]
\captionsetup{width=0.8\textwidth}
\begin{center}
 
\begin{tikzpicture}[>=latex, scale=0.9]


\draw[ line width = 0.6mm, ->, dotted] (-2.5,0)--(7,3.5);
\draw[ line width = 0.6mm, ->,  dotted] (2.5,0)--(4.5,5.9);
\node at (7.3,3.7) {\small $\xi[\eta]$};
\node at (4.7,6.2) {\small $\xi[\lambda]$};

\draw[ line width = 0.3mm, ->, loosely dotted] (0,0)--(6.6*1.1,5.5*1.1);
\node at (6.6,6) {\small $\xi[\rho]$};

\node at (6.3, 4.5) {\small $\cD$};
\draw[ line width = 0.6mm, color=gray] (5,5)--(6,5)--(6,4);

\draw[ fill=lightgray](5,5)circle(1mm);
\node at (5,5.4) {\small $w_N^+$};

\draw[ fill=lightgray](6,5)circle(1mm);
\node at (6.4,4.9) {$v_N$};

\draw[ fill=lightgray](6,4)circle(1mm);
\node at (6.4,4) {\small $w_N^-$};

\draw[line width = 0.3mm, ->] (-5,0)--(8,0);
\draw[line width = 0.3mm, ->] (-5,0)--(-5,6);

\draw[line width = 0.7mm, ->] (0,0)--(0,6);
\draw[line width = 0.7mm, ->] (0,0)--(8,0);

\draw[line width = 1mm] (1,-0.1) -- (1,0.1);
\node at (1.2, -0.35) {\small $\delta N^{2/3}$};

\draw[line width = 1mm] (2.5,-0.1) -- (2.5,0.1);
\node at (2.6, -0.35) {\small $\alpha rN^{2/3}$};

\draw[line width = 1mm] (-1,-0.1) -- (-1,0.1);
\node at (-1.1, -0.35) {\small $-\delta N^{2/3}$};

\draw[line width = 1mm] (-2.5,-0.1) -- (-2.5,0.1);
\node at (-2.7, -0.35) {\small $-\alpha rN^{2/3}$};

\draw[ fill=lightgray](-5,0)circle(1mm);
\node at (-5,-0.4) {$-rN^{2/3}$};

\draw[ fill=lightgray](0,0)circle(1mm);
\node at (-0,-0.4) {$0$};

\end{tikzpicture}
 
\end{center}
\caption{\small Illustration of the set $\mathcal {D}$, the nested LPP processes, and three characteristic directions. The parameters $q=\alpha$ are less than some small constant that depends only on $\rho$, $\delta$ is a small positive constant in $(0, \delta_0)$, and $r$ is a large constant with $r = ({|\log\delta\hspace{0.5pt}|}/{C})^{1/3}$.}\label{nest1}
\end{figure}
From the relation between geodesics of two nested LPP processes given in Lemma \ref{sec3lem2}, 
\begin{align*}
&\mathbb{P}^\rho \{\exists z \in \text{$\cD$  : }1\leq \exittime^{ \, 0\,\rightarrow\, z} \leq \delta N^{2/3}\,\}  \\
 &\quad \quad \leq \mathbb{P}^\rho \{\exists z \in \cD  : \floor{r N^{2/3}}-\delta N^{2/3}\leq \exittime^{ \, -\floor{r N^{2/3}}e_1\,\rightarrow\, z} \leq \floor{r N^{2/3}}+\delta N^{2/3}\,\}
\end{align*}
Thus, it suffices to obtain an upper bound for the second line above. To continue, we describe the rest of the setup shown in Figure \ref{nest1}.

The probability in \eqref{goal8}  vanishes  if $\delta N^{2/3}<1$ and hence we can always assume  
\be\label{Nass4}  N\ge \delta^{-3/2}.  \ee
Introduce the perturbed parameters
  \be\label{la8} 
  \lambda = \rho + \frac{r}{N^{1/3}} \quad \text{ and } \quad\eta= \rho - \frac{r}{N^{1/3}}.
  \ee
We require the following bounds to hold for these two parameters
\be\label{la9} \rho < \lambda \leq \rho + \frac{\rho\wedge(1-\rho)}{2} < 1 \quad \text{ and } \quad 0< \rho - \frac{\rho\wedge(1-\rho)}{2} \leq \eta < \rho. 
       \ee
The point of  the choice $\rho \pm \frac{\rho\wedge(1-\rho)}{2}$ is only to bound $\lambda$ and $\eta$ from above and below by two constants strictly inside $(0,1)$ and that  depend only on $\rho$. These two requirements can be rewritten as
$$
N\geq \left( \frac{2r}{\rho\wedge(1-\rho)}\right)^3.
$$
With \eqref{Nass4}, this bound on $N$  is automatically satisfied  as long as  $  \delta^{-3/2} \geq  \big( \frac{2r}{\rho\wedge(1-\rho)}\big)^3$.
With $r = \big(\tfrac{|\log\delta\hspace{0.5pt}|}{C}\big)^{1/3}$,  we can ensure this by considering $\delta>0$ subject to  
\be\label{dass8}   \delta \leq    \delta_0(\rho) = \left(\tfrac12{C(\rho\wedge(1-\rho))}\right)^3 \wedge \tfrac9{10}.\ee

Our next step is to fix $q$ and $\alpha$ small enough so that the $\xi[\eta]$- and $\xi[\lambda]$-directed rays started at the points $\pm \floor{\alpha rN^{2/3}} e_1$ avoid $\cD$ as shown in Figure \ref{nest1}. 
As in Figure \ref{fig99}, let  $u_N$ be the lattice point closest to where the $\xi[\lambda]$-ray from the origin crosses the north boundary of $[\![0, v_N ]\!]$.  
Then from  \eqref{cbound} we have 
$$  v_N\cdot e_1 - u_N \cdot e_1 \geq (1-\rho) rN^{2/3} .$$
Shift the starting point of the $\xi[\lambda]$-ray from the origin  to $\floor{\alpha r N^{2/3}}e_1$, and let $u_N'$ be  the new crossing point on the north boundary of $[\![0, v_N ]\!]$. By picking $q=\alpha=\frac{1-\rho }{10}$, the following lower bound holds:  
\beq \label{boundbelow} w_N^+\cdot e_1 - u_N' \cdot e_1 \geq \frac{1-\rho}{2} rN^{2/3}.\eeq
This gives us the desired picture for $\xi[\lambda]$ shown in Figure \ref{nest1}. The argument for the $\xi[\eta]$-directed ray is similar.  We may need to decrease $\alpha$ and $q$ further to achieve this but their values   depend only on $\rho$. 
At last, once $\alpha$ is fixed,   $r = \big(\tfrac{|\log\delta\hspace{0.5pt}|}{C}\big)^{1/3}$ allows us to decrease $\delta_0$ further so that  $\delta < \tfrac{1}{3}\alpha r$ for each $0< \delta \leq \delta_0$. This completes the description of  the setup   in Figure \ref{nest1}.

Now, to bound
$$
\mathbb{P}^\rho \big\{\exists z\in \cD:  \floor{r N^{2/3}}-\delta N^{2/3}\leq \exittime^{ \, - \floor{r N^{2/3}}e_1\,\rightarrow\, z} \leq  \floor{r N^{2/3}}+ \delta N^{2/3} \big\}, 
$$
we first bound the probability 
\beq\label{singleedge}
\mathbb{P}^\rho\big\{\exists z\in \cD:  \exittime^{ \, - \floor{r N^{2/3}}e_1\,\rightarrow\, z}  = \floor{r N^{2/3}} +t_0 \big\}  
\eeq
where $t_0$ is a fixed integer in $[\![-\floor{\delta N^{2/3}}, \floor{\delta N^{2/3}}]\!]$.

For $z\in \cD$ and $i\in [\![-\floor{\alpha rN^{2/3}}+1, \floor{\alpha rN^{2/3}}]\!]$, define horizontal increments  
$$
\widetilde{I}{}^z_i = G_{(i-1,1), z} - G_{(i,1), z}  
$$
  on the horizontal line $y=1$.  
Define a 2-sided walk $\{Z_n^{z, t_0}\}_{n\in[\![-\floor{\alpha rN^{2/3}}+1, \floor{\alpha rN^{2/3}}]\!]}$ by setting  $Z_{t_0}^{z, t_0} = 0$ and 
$$
Z_n^{z, t_0} - Z_{n-1}^{z, t_0} = I_n-\widetilde{I}{}^{z}_n .
$$
The boundary weights $I_n$ are those of the $\rho$-LPP process in the quadrant $-\floor{rN^{2/3}}e_1 + \mathbb{Z}^2_{\geq 0}$.  
On  the event 
$$\big\{\exittime^{ \, - \floor{r N^{2/3}}e_1\,\rightarrow\, z}  =  \floor{r N^{2/3}} +  t_0 \big\}$$  
the geodesic goes through the vertical unit edge $[\![(t_0,0), (t_0, 1) ]\!]$. This implies that 
the walk 
$\{Z_n^{z,t_0}\}_{n \in [\![-\floor{\alpha rN^{2/3}}+1, \floor{\alpha rN^{2/3}}]\!]}$ attains its unique  maximum at $n=t_0$.
To see this, 
note that for  $n\in [\![-\floor{\alpha rN^{2/3}}+1, \floor{\alpha rN^{2/3}}]\!] \setminus \{t_0\}$, we have almost surely
\begin{align}
G^\rho_{-\floor{rN^{2/3}}e_1, (t_0,0)} + G_{(t_0,1), z }  &>  G^\rho_{-\floor{rN^{2/3}}e_1, (n,0)} + G_{(n,1),z }\nonumber\\
\Longrightarrow\quad 
G^\rho_{-\floor{rN^{2/3}}e_1, (t_0,0)} - G^\rho_{-\floor{rN^{2/3}}e_1, (n,0)}   &>   G_{(n,1), z } - G_{(t_0,1), z }. \label{thisline}
\end{align}
From this, 
\begin{itemize}
\item 
 for   $n> t_0 $, \eqref{thisline}  $\implies$  $
-\sum_{i=t_0+1}^n I_i   >  -\sum_{i=t_0+1}^n \widetilde{I}{}^z_i$ $\implies$ 
$0 > Z_{n}^{z, t_0} - Z_{t_0}^{z, t_0} $; 

\item for $n<t_0 $, \eqref{thisline}  $\implies$  $
\sum_{i=n+1}^{t_0} I_i   >  \sum_{i=n+1}^{t_0} \widetilde{I}{}^z_i$  $\implies$  $Z_{t_0}^{z, t_0}  - Z_{n}^{z, t_0} > 0 $.

\end{itemize}

Since $\delta \leq \tfrac{1}{3}\alpha r$, $t_0 \in [-\tfrac{1}{3}\alpha r N^{2/3}, \tfrac{1}{3}\alpha r N^{2/3}]$. Also because the value of the walk at $t_0$ is zero, we now have 
\begin{align}
\eqref{singleedge} &\leq \mathbb{P} \Bigl\{ \exists z\in\cD:  \underset{n \in [\![-\floor{\alpha rN^{2/3}}+1, \floor{\alpha rN^{2/3}} ]\!]}{\text{argmax}} \{Z_n^{z,t_0}\} = t_0 \Bigr\} 
\nonumber\\
 & \leq \mathbb{P} \Bigl(  \big\{ \exists z\in \cD: Z_n^{z,t_0} < 0 \text{ for } n\in  \big(t_0, t_0+ \floor{\tfrac{1}{2}\alpha rN^{2/3}} \big] \big\}\label{withunion}\\
 &\quad \quad \quad\quad \quad \quad \quad \quad \quad  \bigcap  \big\{  \exists z\in \cD:  Z_n^{z,t_0} < 0 \text{ for } n\in   \big[t_0- \floor{\tfrac{1}{2}\alpha rN^{2/3}}, t_0 \big)\big\} \Bigr)\nonumber
\end{align}
Due to  the relative positions of $w_N^\pm$ and $z$,  Lemma \ref{sec3lem1} implies that 
\beq \label{I-34} 
\widetilde{I}{}^{\hspace{1.5pt}w_N^-}_i \leq \widetilde{I}{}^z_i \leq \widetilde{I}{}^{\hspace{1.5pt}w_N^+}_i
\quad\forall \, i\in [\![-\floor{\alpha rN^{2/3}}+1, \floor{\alpha rN^{2/3}}]\!] \ \text{ and }  \ z\in \cD. 
\eeq
Hence  for any $z\in \cD$,
\begin{align*}
Z_n^{z,t_0} \geq Z_n^{w_N^+,t_0} \quad \text{ for $n > t_0$}
\quad\text{and}\quad 
Z_n^{z,t_0} \geq Z_n^{w_N^-,t_0} \quad \text{ for $n < t_0$}.
\end{align*}
Therefore, we may bound \eqref{withunion} by
\begin{align}
\eqref{withunion} &\leq \mathbb{P} \bigg(\Big\{ Z_n^{w_N^+,t_0} < 0 \text{ for } n\in  \big(t_0, t_0+ \floor{\tfrac{1}{2}\alpha rN^{2/3}} \big] \Big\} \label{boundpm}\\
& \quad \quad \quad\quad \quad \quad \quad \quad \quad 
\bigcap  \Big\{ Z_n^{w_N^-,t_0} < 0 \text{ for } n\in   \big[t_0- \floor{\tfrac{1}{2}\alpha rN^{2/3}}, t_0 \big)\Big\}\bigg) .\nonumber
\end{align}

We bring the Busemann increments defined by the bulk weights $\{\omega_x\}_{x\in-\floor{rN^{2/3}}e_1 + \mathbb{Z}^2_{> 0}}$ into the picture.
\begin{figure}[t]
\captionsetup{width=0.8\textwidth}
 \begin{center}
 
\begin{tikzpicture}[>=latex, scale=0.7]

\draw[line width = 0.3mm, ->] (-5,0)--(8,0);
\draw[line width = 0.3mm, ->] (-5,0)--(-5,6);


\draw[line width = 0.3mm, ->] (6.7,5.7)--(-0,5.7);
\draw[line width = 0.3mm, ->] (6.7,5.7)--(6.7,2);



\node at (-1, 2.2) { $\widetilde{I}, I^\lambda$ and $I^\eta$ };

\draw[line width = 0.3mm, ->] (-1, 1.8) -- (-0.1, 0.9);

\draw[line width = 1mm, dotted] (-2.5,0.7)--(2.5,0.7);


\draw[line width = 1mm] (2.5,-0.1) -- (2.5,0.1);
\node at (2.6, -0.35) {\small $\alpha rN^{2/3}$};

\draw[line width = 0.3mm, ->] (1.9, -1.1) -- (1.1, -0.1);
\node at (2.1, -1.3) { $I$ };

\draw[line width = 1mm] (-2.5,-0.1) -- (-2.5,0.1);
\node at (-2.7, -0.35) {\small $-\alpha rN^{2/3}$};

\node at (3.5,6.1) {\small $B^\lambda$ and $B^\eta$};
\node at (8.1,3.5) {\small $B^\lambda$ and $B^\eta$};


\draw[ fill=lightgray](-5,0)circle(1mm);
\node at (-5,-0.5) {$-rN^{2/3}$};

\draw[ fill=lightgray](0,0)circle(1mm);
\node at (-0,-0.4) {$0$};

\draw[ fill=white](6.7,5.7)circle(1mm);
\node at (8.4,5.9) {$v_N + e_1+e_2$};

\draw[ line width = 0.6mm, color=gray] (5,5)--(6,5)--(6,4);

\draw[ fill=lightgray](5,5)circle(1mm);
\node at (4.5,5) {\small $w_N^+$};

\draw[ fill=lightgray](6,5)circle(1mm);

\draw[ fill=lightgray](6,4)circle(1mm);
\node at (6,3.6) {\small $w_N^-$};

\end{tikzpicture}
 
\end{center}
\caption{\small Setup for the stationary LPP processes with Busemann increments.}
\label{fig11aa}
\end{figure}
To each edge on the the north and east sides of the rectangle $[\![ -\floor{rN^{2/3}}e_1, v_N +e_1+e_2\rzb$, we attach   $\lambda$- and $\eta$-directed Busemann increments,   coupled as in Proposition \ref{indbuse}. This is depicted   in Figure \ref{fig11aa}. Together with the bulk weights in $[\![ -\floor{rN^{2/3}}e_1+e_2, v_N\rzb$, these define  stationary LPP processes with north and east boundaries, 
denoted by   $G^{\lambda, NE}_{x,v_N +e_1+e_2} $ and $G^{\eta, NE}_{x,v_N +e_1+e_2}$ for $x \in \lzb (-\floor{rN^{2/3}}, 1), v_N \rzb$. 
This is the  construction explained after Theorem \ref{t:buse}.   

On the horizontal line $y=1$  we have for $i\in [\![-\floor{\alpha rN^{2/3}}+1, \floor{\alpha rN^{2/3}}]\!]$  the increments 
\be\label{buseincrement}\begin{aligned}
I^\lambda_i &= G^{\lambda, NE}_{(i-1,1), v_N+ e_1 + e_2} - G^{\lambda, NE}_{(i,1), v_N+ e_1 + e_2}= B^\lambda_{(i-1,1), (i,1)} \\
\text{and} \qquad I^\eta_i &= G^{\eta, NE}_{(i-1,1), v_N+ e_1 + e_2} - G^{\eta, NE}_{(i,1), v_N+ e_1 + e_2} =  B^\eta_{(i-1,1), (i,1)}, 
\end{aligned}\ee
where the latter equalities are instances of \eqref{canbegeneral}. 


\begin{lemma}\label{lm:A78} 
The event 
\be\label{A78} 
A = 
\bigl\{ \forall \hspace{0.6pt} i\in [\![-\floor{\alpha rN^{2/3}}+1, \floor{\alpha rN^{2/3}}]\!]: I^\eta_i\leq \widetilde{I}{}^{\hspace{1.5pt}w_N^-}_i  \leq \widetilde{I}{}^{\hspace{1.5pt}w_N^+}_i \leq I^\lambda_i\bigr\} 
\ee
satisfies $\mathbb{P}(A^c) \leq e^{-Cr^3}$.
\end{lemma}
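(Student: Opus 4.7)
The event $A$ is the conjunction of three pointwise inequalities in $i$: the middle $\widetilde I^{w_N^-}_i \leq \widetilde I^{w_N^+}_i$ and the outer ones $I^\eta_i \leq \widetilde I^{w_N^-}_i$ and $\widetilde I^{w_N^+}_i \leq I^\lambda_i$. I first dispose of the middle inequality, which is deterministic on every $\omega$. A direct computation yields
\[
\widetilde I^{z + e_2}_i - \widetilde I^z_i \;=\; J^{(i-1, 1)}_{z + e_2} - J^{(i, 1)}_{z + e_2} \;\geq\; 0
\]
by the $J$-increment bound in Lemma~\ref{sec3lem1}; the analogous identity for $z \mapsto z - e_1$ uses the $I$-increment bound. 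Hence $\widetilde I^z_i$ is non-decreasing as $z$ moves up or left, and since $w_N^+ - w_N^- = \lfloor qrN^{2/3}\rfloor(e_2 - e_1)$, the middle inequality follows for every $i$.

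The outer inequality $\widetilde I^{w_N^+}_i \leq I^\lambda_i$ (the bound $I^\eta_i \leq \widetilde I^{w_N^-}_i$ being symmetric under $e_1 \leftrightarrow e_2$ and $\lambda \leftrightarrow \eta$) requires a probabilistic argument. My plan is to reduce it to an exit-time event for an inner NE-stationary LPP. Starting from the representation $I^\lambda_i = G^{\lambda, NE}_{(i-1, 1), v_N + e_1 + e_2} - G^{\lambda, NE}_{(i, 1), v_N + e_1 + e_2}$ in \eqref{buseincrement}, I apply the nested LPP construction (Lemma~\ref{sec3lem2}) to the sub-rectangle with NE corner $w_N^+ + e_2$. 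By Theorem~\ref{t:buse}(iii), the resulting inner NE-stationary LPP has iid $\Exp(1 - \lambda)$ horizontal and $\Exp(\lambda)$ vertical Busemann boundary weights on the N and E edges of the sub-rectangle, and $\Exp(1)$ bulk weights inside. On the event that the inner geodesic from $(i, 1)$ to $w_N^+ + e_2$ enters the sub-rectangle's NE boundary within the KPZ window of order $r N^{2/3}$ around the corner, a careful decomposition using Busemann additivity identifies $I^\lambda_i$ with a quantity that dominates $\widetilde I^{w_N^+}_i$.

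The exit-time control is provided by Theorem~\ref{t:exit1}. The choice $q = (1-\rho)/10$ and $\lambda = \rho + r N^{-1/3}$ is calibrated through the geometric estimate \eqref{boundbelow} so that the characteristic $\xi[\lambda]$-ray from any $(i, 1)$ with $|i| \leq \lfloor \alpha r N^{2/3}\rfloor$ points toward $w_N^+ + e_2$ with a transverse safety margin of order $r N^{2/3}$. Thus the bad-exit probability for a single $i$ is $\leq e^{-C r^3}$. To avoid a polynomial-in-$rN^{2/3}$ loss from a naive union bound over all $i$ in the range $\lzb -\lfloor \alpha r N^{2/3}\rfloor + 1, \lfloor \alpha r N^{2/3}\rfloor \rzb$, I plan to use the planar monotonicity of geodesics (Lemma~\ref{sec3lem1}): a sandwiching argument shows that if the two extremal geodesics at $i = \pm \lfloor \alpha r N^{2/3}\rfloor$ both have good exits, then so do all intermediate geodesics. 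Combined with the symmetric $\eta$-argument, this yields $\mathbb{P}(A^c) \leq e^{-Cr^3}$.

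\textbf{Main obstacle.} The principal technical difficulty is the bookkeeping of boundary weights in the nested NE-LPP decomposition, ensuring that ``the inner geodesic exits within the KPZ window of the corner'' translates cleanly into the pointwise increment inequality $\widetilde I^{w_N^+}_i \leq I^\lambda_i$ for every relevant $i$ simultaneously, without leaving a residual error term that would erode the exponential rate $e^{-Cr^3}$.
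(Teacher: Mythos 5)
Your proposal follows essentially the same route as the paper: the middle inequality is the deterministic increment monotonicity of Lemma \ref{sec3lem1} (this is exactly \eqref{I-34}), and the outer inequalities are reduced, via the nested NE-stationary construction and the non-crossing of the geodesics started from $(i,1)$, to the single event that the extremal geodesic of $G^{\lambda,NE}_{(\floor{\alpha rN^{2/3}},1),\,v_N+e_1+e_2}$ enters the north boundary to the left of $w_N^+ + e_2$, whose failure probability is bounded by $e^{-Cr^3}$ through Theorem \ref{t:exit1} and the geometric calibration \eqref{boundbelow}. The "bookkeeping" you flag as the main obstacle is resolved in the paper by the identity \eqref{onevent} on the good exit event together with one more application of Lemma \ref{sec3lem1}, exactly as your plan anticipates.
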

\begin{proof}

The middle inequality is already in \eqref{I-34}. 
We give the proof for
$$\mathbb{P}\bigl\{ \forall \hspace{0.6pt} i\in [\![-\floor{\alpha rN^{2/3}}+1, \floor{\alpha rN^{2/3}}]\!]:   \widetilde{I}{}^{\hspace{1.5pt}w_N^+}_i \leq I^\lambda_i \bigr\} 
\geq 1-e^{-Cr^3}.$$
The similar argument for the remaining part is omitted. 

We argue first  that $\widetilde{I}{}^{\hspace{1.5pt}w_N^+}_i \leq I^\lambda_i $ is implied for the entire range of indices  $i$ when the geodesic of $G^{\lambda, NE}_{(\floor{\alpha rN^{2/3}},1), v_N+e_1+e_2}$ exits the north boundary to the left of the point $w_N^++e_2$.

For $x\in\lzb (-\floor{rN^{2/3}},1), w_N^+ +e_2\rzb$,  let $G^{\lambda,N}_{x,w_N^++e_2}$ denote  the last-passage time from $x$ to $w_N^++e_2$ that  uses the $B^\lambda$ increment weights on the north boundary (superscript $N$ for north).     

The exit time $\exittime^{\lambda, NE, \,x\,\rightarrow\,v_N+e_1+e_2}$ records the signed distance from  the vertex  $v_N+e_1+e_2$ to the point where the geodesic of   $G^{\lambda, NE}_{x,v_N +e_1+e_2} $  enters the north (as a positive value) or the east (as a negative value) boundary of the rectangle $\lzb x, v_N +e_1+e_2\rzb$.  Since geodesics cannot cross, the event
$$\left\{\exittime^{\lambda, NE, \,(\floor{\alpha rN^{2/3}}, 1)\,\rightarrow\,v_N+e_1+e_2}> qrN^{2/3}\right\}$$ implies $$\bigcap_{i\in [\![-\floor{\alpha rN^{2/3}}+1, \floor{\alpha rN^{2/3}}]\!]}\left\{\exittime^{\lambda, NE, \,(i, 1)\,\rightarrow\,v_N+e_1+e_2}> qrN^{2/3}\right\}. $$
This further implies
\begin{align}
G^{\lambda,N}_{(i-1,1),w_N^++e_2} - &G^{\lambda,N}_{(i,1),w_N^+ +e_2} =G^{\lambda,NE}_{(i-1,1),v_N+e_1+e_2} -G^{\lambda,NE}_{(i,1),v_N+e_1+e_2}\label{onevent}\\[3pt]  
&\quad \quad \quad \quad \quad \quad \quad\quad \quad \quad \quad \quad\forall {i\in [\![-\floor{\alpha rN^{2/3}}+1, \floor{\alpha rN^{2/3}}]\!]}. \nonumber 
\end{align}
 In the derivation below,  Lemma $\ref{sec3lem1}$ gives the first  inequality.   The equality in the second line is \eqref{onevent} which is valid on 
the event $\bigl\{\exittime^{\lambda,NE, \,(\floor{\alpha rN^{2/3}}, 1)\,\rightarrow\,v_N+e_1+e_2}> qrN^{2/3}\bigr\}$: 
\begin{align*}
 \nonumber \widetilde{I}{}^{\hspace{1.5pt}w_N^+}_i = G_{(i-1,1), w_N^+} -  G_{(i,1), w_N^+} 
&\leq G^{\lambda,N}_{(i-1,1), w_N^++e_2} -  G^{\lambda,N}_{(i,1), w_N^++e_2}\\
& = G^{\lambda,NE}_{(i-1,1), v_N+e_1+e_2} -  G^{\lambda,NE}_{(i,1), v_N+e_1+e_2} 
= I^\lambda_i\\[3pt]   
&\quad \quad \quad \quad \quad \quad \quad\quad \quad \quad \quad \quad\forall {i\in [\![-\floor{\alpha rN^{2/3}}+1, \floor{\alpha rN^{2/3}}]\!]}. 
\end{align*}
This finishes the proof   that 
$\exittime^{\lambda, NE, (\floor{\alpha rN^{2/3}}, 1)\,\rightarrow\,v_N+e_1+e_2}> qrN^{2/3}$ implies  $ \widetilde{I}{}^{\hspace{1.5pt}w_N^+}_i \leq I^\lambda_i$ for all ${i\in [\![-\floor{\alpha rN^{2/3}}+1, \floor{\alpha rN^{2/3}}]\!]}$. 

Finally, we show that 
$$
\mathbb{P}\left\{\exittime^{\lambda, NE, \,(\floor{\alpha rN^{2/3}}, 1)\,\rightarrow\,v_N+e_1+e_2}> qrN^{2/3}\right\} \geq 1-e^{-Cr^3}.
$$
This follows from the standard exit time estimate. As shown in the left diagram of  Figure \ref{fig111}, the geodesic of  $G^{\lambda,NE}_{(\floor{\alpha rN^{2/3}}, 1), v_N+e_1+e_2}$ (gray dotted line) tends to follow the characteristic direction $\xi[\lambda]$ which means it enters the north boundary on the left of $w_N^+ + e_2$.
\begin{figure}[t]
\captionsetup{width=0.8\textwidth}
 \begin{center}
 
\begin{tikzpicture}[>=latex, scale=1.2]

\draw[line width = 0.3mm, ->, lightgray] (0,0)--(3,0);
\draw[line width = 0.3mm, ->, lightgray ] (0,0)--(0,2);

\draw[line width = 0.3mm, ->] (4.2,3)--(4.2-4,3);
\draw[line width = 0.3mm, ->] (4.2,3)--(4.2,3-3);

\draw[line width = 0.3mm, loosely dotted, ->] (1,0.5) -- (3,3.5);
\node at (3.1,3.9) {\small $\xi[\lambda]$};
\draw[dotted, color=gray, line width = 1mm] (1,0.5) -- (1,1) --(1.5, 1) -- (1.5,1.5) --(2,1.5) -- (2,2.5)--(2.5,2.5)--(2.5,3) -- (4.2,3);

\draw[ fill=lightgray](4.2,3)circle(1mm);
\node at (5.2,3) {\small $v_N^++e_1+e_2$};

\draw[ fill=lightgray](0,0)circle(1mm);
\node at (0,-0.4) {\small $(0,0)$};

\draw[ fill=lightgray](3.5,3)circle(1mm);
\node at (3.5,2.6) {\small $w_N^++e_2$};

\draw[ fill=lightgray](1,0.5)circle(1mm);
\node at (2,0.5) {\small $(\alpha rN^{2/3},1)$};


\draw[line width = 0.3mm, ->, gray] (7+4.2,3)--(7+4.2-4,3);
\draw[line width = 0.3mm, ->, gray] (7+4.2,3)--(7+4.2,3-3);

\draw[line width = 0.3mm, loosely dotted, ->] (7+1,0.5) -- (7+1+3,3.5+1.5);
\draw[dotted, color=lightgray, line width = 1.3mm] (7+1,0.5) -- (7+1,1) --(7+1.5, 1) -- (7+1.5,1.5) --(7+2,1.5) -- (7+2,2)--(7+3,2)--(7+3,2.5) -- (7+3.76,2.5)-- (7+3.76,3) -- (7+4.2,3)  ;
\draw[dotted, color=darkgray, line width = 1mm](7+3.5,4.26)--(10.5, 2.5);
\draw[dotted, color=black, line width = 0.5mm] (7+1,0.5) -- (7+1,1) --(7+1.5, 1) -- (7+1.5,1.5) --(7+2,1.5) -- (7+2,2)--(7+3,2)--(7+3,2.5) -- (7+3.5,2.5);

\draw[line width = 0.3mm, ->] (7+3.5,4.26)--(7,4.26);
\draw[line width = 0.3mm, ->] (7+3.5,4.26)--(7+3.5,1.3);

\draw[ fill=black](7+3.5,4.26)circle(1mm);

\node at (11,4.5) {\small $\xi[\lambda]$};

\draw[ fill=lightgray](7+4.2,3)circle(1mm);
\node at (7+5.2,3) {\small $v_N^++e_1+e_2$};

\draw[ fill=white](7+2.65,3)circle(1mm);

\draw[ fill=lightgray](7+3.5,3)circle(1mm);

\draw[ fill=lightgray](7+1,0.5)circle(1mm);
\node at (7+2,0.5) {\small $(\alpha rN^{2/3},1)$};

\draw[line width = 0.3mm, ->] (9.5, 3.6)--(10.25, 3.4);
\node at (9-0.1,3.65) {\small triangle};

\node at (1.3,3.3) {\small $B^\lambda$};
\node at (4.5,1) {\small $B^\lambda$};

\end{tikzpicture}
 
\end{center}
\caption{\small \textit{Left:} The likely behavior of the geodesic of $G^{\lambda,NE}_{(\floor{\alpha rN^{2/3}}, 1), v_N+e_1+e_2}$. It enters the north boundary to  the left of $w_N^+ +e_2$. \textit{Right:} The unlikely behavior of the geodesic of $G^{\lambda,NE}_{(\floor{\alpha rN^{2/3}}, 1), v_N+e_1+e_2}$. In this case, the dark dotted line is the  geodesic between the black dot and $(\floor{\alpha rN^{2/3}}, 1)$. It spends an atypically large amount of time on the boundary. }
\label{fig111}
\end{figure}
Else, by Lemma \ref{relatetau}, there  exists a parameter-$\lambda$ stationary LPP process whose  geodesic (black dotted line in the right diagram of Figure \ref{fig111}) in the characteristic direction spends excessive time on the boundary. The precise argument goes as follows.

Consider   the right triangle whose vertices are  the black, gray and white dots highlighted in the right diagram  of Figure $\ref{fig111}$. The distance between the white and gray dots is bounded below by $\frac{1-\rho}{2} rN^{2/3}$  by  \eqref{boundbelow}. Then, the distance between the black dot and the gray dot is at least 
$ \frac{\lambda^2}{(1-\lambda)^2}\frac{1-\rho}{2} rN^{2/3}$
where $\frac{\lambda^2}{(1-\lambda)^2}$ is the slope of the hypotenuse. 
By  Theorem \ref{t:exit1}, the probability that  the geodesic shown as the black dotted line remains  on the boundary throughout the segment between the black and the gray dot  is bounded above by $e^{-Cr^3}$.   Here $C$ depends on $\lambda$,  and bounds  \eqref{la9} turn this into a dependence  on $\rho$.   This completes the proof of Lemma \ref{lm:A78}. 
%
%
\end{proof}

With these new horizontal increments $I^\lambda$ and $I^\eta$, define two more 2-sided random walks $Z^{\lambda, t_0}_n$ and $Z^{\eta, t_0}_n$  with  $Z^{\lambda, t_0}_{t_0}= Z^{\eta, t_0}_{t_0} = 0$ and 
\begin{align*}
Z_n^{\lambda, t_0} - Z_{n-1}^{\lambda, t_0} &= I_n-{I}{}^{\lambda}_n,\\
Z_n^{\eta, t_0} - Z_{n-1}^{\eta, t_0} &= I_n-{I}{}^{\eta}_n,
\end{align*}
On the event $A$ from \eqref{A78}, 
\begin{align*}
Z_n^{\lambda, t_0} \leq Z_n^{w_N^+, t_0} \text{ for $n > t_0$} 
\quad\text{and}\quad 
Z_n^{\eta, t_0} \leq Z_n^{w_N^-, t_0}\text{ for $n < t_0$} .
\end{align*}
We continue our bound 
\begin{align}
\mathbb{P}(\text{event in }\eqref{boundpm} \cap A)   &\leq \mathbb{P} \bigg(\Big\{ Z_n^{\lambda,t_0} < 0 \text{ for } n\in  \big(t_0, t_0+ \floor{\tfrac{1}{2}\alpha rN^{2/3}} \big] \Big\}  \label{boundwithind}\\
& \quad \quad \quad\quad \quad \quad 
\bigcap \Big\{ Z_n^{\eta,t_0} < 0 \text{ for } n\in   \big[t_0- \floor{\tfrac{1}{2}\alpha rN^{2/3}}, t_0 \big)\Big\}\bigg). \nonumber
\end{align}
From Proposition \ref{indbuse}, the increment  variables $\{I^\lambda_{(i,1)}\}_{i> t_0} \cup \{I^{\eta}_{(i,1)}\}_{i \leq t_0}$ are independent, and these are independent of the boundary weights $\{I_i\}$ by construction. Thus, the two events on the right-hand  side above are independent. This gives
\begin{align*} 
\eqref{boundwithind} &= \mathbb{P}\Big\{ Z_n^{\lambda,t_0} < 0 \text{ for } n\in  \big(t_0, t_0+ \floor{\tfrac{1}{2}\alpha rN^{2/3}} \big] \Big\}\\
&\qquad\qquad\qquad\qquad\qquad
 \cdot \,\mathbb{P} \Big\{ Z_n^{\eta,t_0} < 0 \text{ for } n\in   \big[t_0- \floor{\tfrac{1}{2}\alpha rN^{2/3}}, t_0 \big)\Big\}. 
\end{align*} 
The steps of the random walks in the  two probabilities above have distributions $\Exp(1-\rho) - \Exp(1-\lambda)$ and $\Exp(1-\eta) - \Exp(1-\rho)$, respectively. By Lemma \ref{rw} each of the probabilities  is bounded above by $C(\rho) rN^{-1/3}$ where $C(\rho)$ is a constant that depends only on $\rho$ by virtue of \eqref{la9}. 

To summarize, we have shown
\begin{align*}
\mathbb{P}^\rho &\{\exists z \in \cD :  \exittime^{ \, -\floor{r N^{2/3}}e_1\,\rightarrow\, z}  =\floor{r N^{2/3}}+ t_0\,\} \\
& \leq  \mathbb{P}({A}^c) +\mathbb{P}^\rho \big(\{\exists z \in \cD :  \exittime^{ \, -\floor{r N^{2/3}}e_1\,\rightarrow\, z}  = \floor{r N^{2/3}}+t_0\,\} \cap A\big)\\
& \leq  e^{-Cr^3} + \big(C(\rho) rN^{-1/3})^2. 
\end{align*}
With a union bound over $t_0$, 
\begin{align*}
 \mathbb{P}^\rho &\{\exists z\in \cD: \floor{r N^{2/3}}-\delta N^{2/3}\leq \exittime^{ \, - \floor{r N^{2/3}}e_1\,\rightarrow\, z} \leq \floor{r N^{2/3}}+\delta N^{2/3} \} \\
& \leq   \mathbb{P}({A}^c) +\mathbb{P}^\rho \big( \{\exists z\in \cD: \floor{r N^{2/3}}-\delta N^{2/3}\leq \exittime^{ \, - \floor{r N^{2/3}}e_1\,\rightarrow\, z} \leq \floor{r N^{2/3}}+\delta N^{2/3} \} \cap A\big)\\
& \leq  e^{-Cr^3}  + (2\delta N^{2/3})\big(C(\rho) rN^{-1/3})^2 \\
& = e^{-Cr^3} + C(\rho)2 \delta r^2. 
\end{align*}
Letting $r = \left(C^{-1}{|\log \delta|}\right)^{1/3}$, this gives the desired upper bound $C(\rho) \delta |\log\delta\hspace{0.5pt}|^{2/3}$ with a new constant $C(\rho)$. This completes the proof for the dark region $\cD$ of Figure \ref{fig9}. 

For geodesics that enter  $\mathcal{L}^+$ we use monotonicity that comes from uniqueness of finite geodesics: 
\begin{align*}
&\mathbb{P}^\rho \big\{\exists v\in\cL^+ : 1\leq\exittime^{\,0\,\rightarrow \,v} \leq \delta N^{2/3} \big\} \leq \mathbb{P}^\rho \big\{\exists v\in\cL^+ :  \exittime^{\,0\,\rightarrow \,v}\ge 1  \big\}  \\
&\qquad \leq \mathbb{P}^\rho \big\{ \exittime^{\,0\,\rightarrow\,w_N^+} \ge 1 \big\}  
\leq e^{-Cr^3} = \delta.
\end{align*}
The last inequality comes from  bound \eqref{weprove2} from Corollary \ref{sec3cor}.

\begin{figure}[t]
\captionsetup{width=0.8\textwidth}
\begin{center}
\begin{tikzpicture}[scale = 1]

\draw[gray ,dotted, line width=0.3mm](1,-1)--(4,3);

\draw[gray, line width=0.3mm, ->] (0,0) -- (6,0);
\draw[gray, line width=0.3mm, ->] (0,0) -- (0,4);

\draw[ line width=0.3mm, ->] (1,-1) -- (5,-1);
\draw[ line width=0.3mm, ->] (1,-1) -- (1,2);

\draw[ fill=white](1.75, 0)circle(1.3mm);
\draw[ fill=black ](1,0)circle(1.3mm);

\draw[gray ,dotted, line width=1.1mm] (0,0) -- (0.7,0) -- (0.7,0.5) --(1,0.5) ;

\draw[lightgray ,dotted, line width=1.4mm] (1,0.5) --(2,0.5)-- (2,1) -- (4, 1) -- (4,3) ;
\draw[black ,dotted, line width=0.6mm] (1,0)  --(1,0.5) --(2,0.5)-- (2,1) -- (4, 1) -- (4,3) ;

\draw[black ,dotted, line width=1.1mm] (1,-1) -- (1,0.5);

\draw[ fill=lightgray](0,0)circle(1mm);
\node at (-0.3,-0.4) {$(0,0)$};

\draw[ fill=lightgray](1,-1)circle(1mm);
\node at (0.7,-1.4) {$({\delta N^{2/3}},-h)$};

\fill[color=white] (4,3)circle(1.7mm); 
\draw[ fill=lightgray](4,3)circle(1mm);
\node at (4.3,3.3) {$w_N^-$};

\end{tikzpicture}
\end{center}
\caption{\small From Lemma \ref{relatetau}, if $ \exittime^{\,0\,\rightarrow \,w_N^-}\le \delta N^{2/3}$ (gray dotted line), then $\exittime^{\,(\floor{\delta N^{2/3}}, -h)\,\rightarrow \,w_N^-}\le  -h$ (black dotted line).}
\label{lminus}
\end{figure}

For geodesics that enter $\mathcal{L}^-$, this follows from Lemma \ref{relatetau}. First, from the uniqueness of finite geodesics, it suffices to look at the point $w_N^-$ since 
\begin{align*}
\mathbb{P}^\rho \big\{\exists v\in\cL^- : 1\leq\exittime^{\,0\,\rightarrow \,v} \leq \delta N^{2/3} \big\} \leq \mathbb{P}^\rho \big\{ \exittime^{\,0\,\rightarrow \,w_N^-}\le \delta N^{2/3}  \big\}.
\end{align*}
Trace back a $(-\xi[\rho])$-directed ray from the point $w_N^-$.
Up to a $\rho$-dependent constant, this ray  crosses the $x$-axis  at 
$\floor{\frac{(1-\rho)^2}{\rho^2}qrN^{2/3}}e_1$  (the white dot in Figure \ref{lminus}). Decrease  $\delta_0$ further if necessary so that 
$\delta< \delta_0 \leq \frac{(1-\rho)^2}{2\rho^2}qr.$
Then the distance between the black and white dots in Figure \ref{lminus}  is at least  $\frac{(1-\rho)^2}{2\rho^2}qrN^{2/3}$.

Let $h$ be the positive integer such that $(\floor{\delta rN^{2/3}}, -h)$ is the closest lattice point to the $(-\xi[\rho])$-directed ray from  $w_N^-$. Then, $h\geq \frac{1}{2}qrN^{2/3}$.
From Lemma \ref{sec3lem1}, whenever $ \exittime^{\,0\,\rightarrow \,w_N^-}\le \delta N^{2/3}$ (gray dotted line), then  $\exittime^{\,(\floor{\delta N^{2/3}}, -h)\,\rightarrow \,w_N^-}< -h$ (black dotted line).   Theorem \ref{t:exit1}  bounds this probability  by $e^{-Cr^3}$. This  completes the proof of Theorem \ref{t:small-ub}.
\end{proof}

\section{Dual geodesics and proofs of the main theorems} \label{s:proofs}

The main theorems from Section \ref{s:main} are proved by applying the exit time bounds of Section \ref{s:exit-pf} to dual geodesics that live on the dual lattice.  First define south and west directed  semi-infinite paths (superscript sw) in terms of the Busemann functions from Theorem \ref{t:buse}: 
\beq\label{bg16} \begin{aligned}  
\mathbf{b}^{{\rm sw},\rho, x}_0&=x, \quad \text{and for $k\ge 0$}  \\[4pt] 
\mathbf{b}^{{\rm sw},\rho, x}_{k+1}&=\begin{cases}   \mathbf{b}^{{\rm sw},\rho, x}_{k} - e_1, &\text{if } \ B^\rho_{\mathbf{b}^{{\rm sw},\rho, x}_{k}-e_1,\,\mathbf{b}^{{\rm sw},\rho, x}_{k}}\le   B^\rho_{\mathbf{b}^{{\rm sw},\rho, x}_{k}-e_2,\,\mathbf{b}^{{\rm sw},\rho, x}_{k}}
\\[6pt]  
 \mathbf{b}^{{\rm sw},\rho, x}_{k} - e_2, &\text{if } \   B^\rho_{\mathbf{b}^{{\rm sw},\rho, x}_{k}-e_2,\,\mathbf{b}^{{\rm sw},\rho, x}_{k}} <   B^\rho_{\mathbf{b}^{{\rm sw},\rho, x}_{k}-e_1,\,\mathbf{b}^{{\rm sw},\rho, x}_{k}} . 

\end{cases} 
\end{aligned} \eeq
Recall  the dual weights  
$\{\widecheck{\omega}^\rho_x = B^\rho_{x-e_1,x}\wedge B^\rho_{x-e_2,x}\}_{x\in \Z^2}$ introduced in part (iii) of Theorem \ref{t:buse}. 


Let  $e^*=\tfrac12(e_1+e_2)=(\tfrac12,\tfrac12)$ denote the shift between the lattice $\Z^2$ and its dual $\Z^{2*}=\Z^2+e^*$.   Shift the dual weights to the dual lattice by defining 
$\w^{*}_z=\widecheck{\omega}^\rho_{z+e^*}$
 for $z\in\Z^{2*}$.  By Theorem \ref{t:buse}(iii) these weights are i.i.d.\ Exp(1).   The LPP process for these weights is defined as in \eqref{sec2G}: 
\beq\label{sec2G*}
G^{*}_{x,y} = \max_{z_{\bbullet}\, \in\, \Pi^{x,y}} \sum_{k=0}^{|y-x|_1} \omega^{*}_{z_k}.
\eeq
Shift the southwest paths to the dual lattice by defining 
\[  \mathbf{b}^{*,\rho,z}_{k} =  \mathbf{b}^{{\rm sw},\rho,z+e^*}_{k}-e^* \qquad\text{  for $z\in\Z^{2*}$ and $k\geq0$.} \] 
These definitions  reproduce on the dual lattice  the semi-infinite geodesic setting  described in  Section \ref{s:buse}, with reflected lattice directions.   This is captured in the next theorem that summarizes the development from Section 4.2 of \cite{coalnew}.

\begin{theorem} \label{t:dual}   Fix $\rho\in(0,1)$.   Then the following hold almost surely.  
\begin{enumerate}[{\rm(i)}] \itemsep=3pt 
\item  For each $z\in\Z^{2*}$, the path $\mathbf{b}^{*,\rho,z}$  is the unique $(-\xi[\rho])$-directed semi-infinite geodesic from $z$ in the LPP process \eqref{sec2G*}.
  Precisely, 
\[   \lim_{n\to\infty} \frac{\mathbf{b}^{*,\rho,z}_{n}}{n} = -\xi[\rho]
\quad \text{and}\quad 
 \forall k<l \text{ in } \Z_{\geq0}:  
 G^*_{\mathbf{b}^{*,\rho,z}_{l}, \mathbf{b}^{*,\rho,z}_{k}}= \sum_{i=k}^l \omega^{*}_{\mathbf{b}^{*,\rho,z}_{i}} .
 \]
\item The semi-infinite geodesics and the dual semi-infinite geodesics are equal in distribution, modulo the $e^*$-shift and lattice reflection: 
 $ \{\mathbf{b}^{*,\rho,z}\}_{z\in \Z^{*2}}\;\overset{d}=\;  \{ - e^* -\bgeod{\,\rho}{-(z+e^*)}\}_{z\in \Z^{*2}}$.  
\item The collections of paths 
 $ \{\bgeod{\,\rho}{z}\}_{z\in \Z^2}$ and  $ \{\mathbf{b}^{*,\rho,z}\}_{z\in \Z^{*2}}$ almost surely never cross each other.
\end{enumerate} 
\end{theorem}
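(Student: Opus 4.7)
The foundation is Theorem \ref{t:buse}(iii), which provides two essential ingredients. First, by choosing down-right paths $\gamma$ that run arbitrarily far southwest of any finite set of target vertices, the full collection $\{\widecheck{\omega}^\rho_z\}_{z\in\Z^2}$ is jointly i.i.d.\ $\Exp(1)$, so $\{\omega^*_z\}_{z\in\Z^{*2}}$ forms a standard $\Exp(1)$ CGM environment on the dual lattice, placing $G^*$ under the scope of Theorems \ref{t:geod1} and \ref{t:buse}. Second, at each primal vertex $y$ the increments $B^\rho_{y-e_1,y}$ and $B^\rho_{y-e_2,y}$ are independent continuous random variables (apply (iii) to a down-right path through $y$ that contains the steps $y-e_1\to y$ and $y\to y-e_2$), so almost surely $B^\rho_{y-e_1,y}\neq B^\rho_{y-e_2,y}$ for every $y\in\Z^2$.

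For part (i) the LPP identity comes from a telescoping argument. By the SW recursion \eqref{bg16}, at each step $y_{i+1}=y_i-e_j$ of the primal path, $\widecheck{\omega}^\rho_{y_i}=B^\rho_{y_i-e_1,y_i}\wedge B^\rho_{y_i-e_2,y_i}$ is attained on the side of the chosen step, so $\widecheck{\omega}^\rho_{y_i}=B^\rho_{y_{i+1},y_i}$, and additivity of $B^\rho$ gives $\sum_{i=0}^{n-1}\widecheck{\omega}^\rho_{y_i}=B^\rho_{y_n,y_0}$. For an arbitrary up-right path $(p_0,\dotsc,p_n)$ with $p_0=y_n$ and $p_n=y_0$, the bound $\widecheck{\omega}^\rho_{p_i}\le B^\rho_{p_{i-1},p_i}$ (valid because $p_{i-1}$ is one of the two neighbors entering the min defining $\widecheck{\omega}^\rho_{p_i}$) telescopes to $\sum_{i=1}^n \widecheck{\omega}^\rho_{p_i}\le B^\rho_{p_0,p_n}=B^\rho_{y_n,y_0}$, so $\sum_{i=0}^n\widecheck{\omega}^\rho_{p_i}\le \widecheck{\omega}^\rho_{y_n}+B^\rho_{y_n,y_0}=\sum_{i=0}^n\widecheck{\omega}^\rho_{y_i}$, with equality realized by the reversed SW path. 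Translating by $-e^*$ onto $\Z^{*2}$ yields $G^*_{\mathbf{b}^{*,\rho,z}_l,\mathbf{b}^{*,\rho,z}_k}=\sum_{i=k}^l \omega^*_{\mathbf{b}^{*,\rho,z}_i}$; uniqueness of the geodesic follows from the strict-inequality statement above. The directional limit is deferred to (ii).

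Part (ii) is a distributional symmetry: since $\{\omega^*_z\}_{z\in\Z^{*2}}$ is i.i.d.\ $\Exp(1)$, Theorems \ref{t:geod1} and \ref{t:buse} apply to the dual CGM after the lattice reflection $w\mapsto -w$, producing unique $\xi[\rho]$-directed semi-infinite geodesics with the same joint law as $\{\bgeod{\,\rho}{x}\}$. The bijection $z\mapsto -(z+e^*)$ between $\Z^{*2}$ and $\Z^2$ intertwines the SW recursion \eqref{bg16} with the primal NE recursion \eqref{busegeo} in the reflected dual environment, so $\{\mathbf{b}^{*,\rho,z}\}$ corresponds bijectively and in distribution to $\{\bgeod{\,\rho}{x}\}$; unwinding the reflection yields the distributional identity and the directional limit claimed in (i). Part (iii) is a local check: the primal edge $[y,y+e_1]$ lies in the primal tree iff $B^\rho_{y,y+e_1}\le B^\rho_{y,y+e_2}$, while the dual edge $[y+e^*-e_2,y+e^*]$ bisecting it lies in the dual tree iff $B^\rho_{y+e_1,y+e_1+e_2}\le B^\rho_{y+e_2,y+e_1+e_2}$; the additivity identity $B^\rho_{y,y+e_1}-B^\rho_{y,y+e_2}=B^\rho_{y+e_2,y+e_1+e_2}-B^\rho_{y+e_1,y+e_1+e_2}$ (obtained from the two decompositions of $B^\rho_{y,y+e_1+e_2}$) shows these two inequalities have opposite signs, so by (b) they are almost surely mutually exclusive. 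Vertical primal edges are handled symmetrically, ruling out crossings.

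The main technical obstacle I anticipate is the careful bookkeeping of the $e^*$ shift in (i), in particular ensuring that the extra endpoint contribution $\widecheck{\omega}^\rho_{y_n}$ is absorbed exactly in the right place in the LPP sum. The directional convergence is not available via a naive law of large numbers along the SW path, since consecutive Busemann increments along $\mathbf{b}^{*,\rho,z}$ are not independent; a Bernoulli computation based on the single-step probabilities $(1-\rho,\rho)$ would give the wrong asymptotic direction $-(1-\rho,\rho)$ rather than a multiple of $-\xi[\rho]=-((1-\rho)^2,\rho^2)$. Routing the direction through the symmetry argument of (ii) is the cleanest way around this.
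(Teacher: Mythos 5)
Your telescoping argument for the finite-geodesic property in (i) is correct and is the standard one: along the SW path the chosen step realizes the minimum defining $\widecheck{\omega}^\rho$, so the dual weights sum to a Busemann increment, while on any competing up-right path $\widecheck{\omega}^\rho_{p_i}\le B^\rho_{p_{i-1},p_i}$ telescopes to the same bound; uniqueness then follows from the a.s.\ strict inequality $B^\rho_{y-e_1,y}\neq B^\rho_{y-e_2,y}$, which you correctly extract from Theorem \ref{t:buse}(iii). Your part (iii) is exactly the paper's own argument (the display \eqref{addB} and Figure \ref{disjointness}). Note, however, that the paper does not actually prove (i) and (ii); it cites Section 4.2 of \cite{coalnew} and, for (ii), \cite{dual}, so you are attempting more than the paper does.

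The genuine gap is in (ii), and it propagates back into the directional limit of (i) that you deferred there. The assertion that the map $z\mapsto -(z+e^*)$ ``intertwines the SW recursion \eqref{bg16} with the primal NE recursion \eqref{busegeo} in the reflected dual environment'' is not a formal bookkeeping step: the SW recursion is driven by the \emph{primal} Busemann field $B^\rho$, whereas the NE recursion for the reflected dual environment would be driven by the Busemann function \emph{of the dual weights} $\omega^*$. The identification of these two driving fields (equivalently, the distributional invariance of the full planar field $\{(B^\rho_{x,x+e_1},B^\rho_{x,x+e_2})\}_{x\in\Z^2}$ jointly with $(\omega,\widecheck\omega^\rho)$ under point reflection) is precisely the content of Pimentel's duality and does not follow from Theorem \ref{t:buse}(iii), which pins down the joint law only along a single down-right path together with the two bulk fields, not the law of the increment field across the whole plane. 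The fix available inside this paper is to route everything through a single corner path: by \eqref{canbegeneral} the SW paths in the quadrant $x-\Z^2_{\ge0}$ are a deterministic functional of (NE boundary increments at $x$, $\omega$ in $x-\Z^2_{>0}$), by \eqref{needexact} the NE geodesics in $x+\Z^2_{\ge0}$ are the reflected functional of (SW boundary increments at $x$, $\widecheck\omega^\rho$ in $x+\Z^2_{>0}$), and Theorem \ref{t:buse}(iii) makes these two input triples equal in law after reflection; letting $x$ run to infinity gives (ii), and the directional limit in (i) then transfers from $\bgeodr{x}$ through this correspondence. Without some version of this step your argument for (ii) assumes what is to be proved, and the directional limit in (i) is left unsupported, since (as you yourself observe) it cannot be obtained by a law of large numbers along the path.
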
 



Part (ii), the distributional equality of the tree of directed geodesics and the dual, was first proved  in \cite{dual}.    The non-crossing property of part (iii) can be seen from a simple picture.  The additivity of the  Busemann functions gives 
\beq \label{addB}B^\rho_{x,x+e_1} + B^\rho_{x+e_1, x+e_1+e_2} = B^\rho_{x,x+e_2} + B^\rho_{x+e_2, x+e_1+e_2}.\eeq
By \eqref{busegeo}  $\bgeod{\,\rho}{x}_1=x+e_1$ if and only if 
$B^\rho_{x,x+e_1} \leq B^\rho_{x,x+e_2}$.  By \eqref{addB} this is equivalent to 
$B^\rho_{x+e_2, x+e_1+e_2}\leq  B^\rho_{x+e_1, x+e_1+e_2} $ which is the same as 
$\mathbf{b}^{{\rm sw},\rho, x+e_1+e_2}_1=x+e_2$, and  this last  is equivalent to 
$\mathbf{b}^{*,\rho,x+e^*}_{k} =x+e^*-e_1$.  An analogous argument works for the $e_2$ step.  The conclusion is that  the increments of  $\bgeod{\,\rho}{\hspace{0.5pt}\bbullet}$  out of $x$ and  $\mathbf{b}^{*,\rho,\hspace{0.5pt}\bbullet}$ out of $x+e^*$ cannot cross.  See Figure \ref{disjointness}.

\begin{figure}[t]
\captionsetup{width=0.8\textwidth}
\begin{center}
 
\begin{tikzpicture}[>=latex, scale=0.75]

\draw[gray, line width = 0.8mm, ->] (0,0)--(1.8,0);
\draw[gray,  dotted, line width = 0.8mm, ->] (0.8,0.8)--(-1,0.8);
\draw[lightgray, line width = 0.8mm, ->] (1.6,1.6)--(-0.2,1.6);

\draw[ fill=lightgray](0,0)circle(1mm);
\node at (-0.3,-0) {$x$};

\draw[ fill=lightgray](1.6,1.6)circle(1mm);
\node at (1.6+1.4,1.6) {$x+e_1+e_2$};

\draw[ fill=lightgray](0.8,0.8)circle(1mm);
\node at (1+0.8,0.8) {$x+e^*$};

\end{tikzpicture}
\caption{\small  The equivalent events $\bgeod{\,\rho}{x}_1=x+e_1$ (dark gray arrow),   
$\mathbf{b}^{{\rm sw},\rho, x+e_1+e_2}_1=x+e_2$ (light gray arrow), and $\mathbf{b}^{*,\rho,x+e^*}_{k} =x+e^*-e_1$ (dotted arrow).   The dark gray and dotted arrows never cross.  
} \label{disjointness}
\end{center}
\end{figure}

To connect the dual semi-infinite geodesics with $\rho$-geodesics, define a stationary LPP process $G^{*,\,\rho}_{-e^*, \hspace{0.5pt}\bbullet}$ exactly as in \eqref{sec2G^rho} with boundary weights on the south and east boundaries, but on the dual quadrant $-e^*+\Z_{\ge0}^2$ based at $-e^*$.  The boundary weights are defined by shifting Busemann function values to the dual lattice: 
\[   I^{*,\,\rho}_{-e^*+ke_1}= B^\rho_{(k-1)e_1, ke_1}
\quad\text{and}\quad 
J^{*,\,\rho}_{-e^*+le_2}= B^\rho_{(l-1)e_1, le_1}.  
\] 
The bulk weights are $\{\w^*_x: x\in\Z^{*2}, x\ge e^*\}$. 


\begin{proposition} \label{sec3prop1}
For any $w\in e^*+\Z_{\ge0}^2$ the following holds.  The edges  of the semi-infinite geodesic  $\mathbf{b}^{*,\rho, w}$ that have at least one endpoint in  $e^*+\Z_{\geq 0}^2$  are also edges of the  geodesic of  $G^{*,\,\rho}_{-e^*, w}$. 
\end{proposition}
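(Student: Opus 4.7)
The plan is to do backward induction along $\mathbf{b}^{*,\rho, w}$, showing at each step that the incoming edge of the geodesic of $G^{*,\rho}_{-e^*, w}$ coincides with the corresponding outgoing SW edge of $\mathbf{b}^{*,\rho, w}$. The key input is the identity
\[
G^{*,\rho}_{-e^*, y} \;=\; B^\rho_{0,\, y + e^*} \qquad \text{for every } y \in -e^* + \Z^2_{\ge 0}.
\]
On the interior this is just \eqref{needexact} applied after the lattice shift by $e^*$ that transforms the dual stationary LPP based at $-e^*$ into the primal stationary LPP based at $0$ (both use bulk weights $\wc\w^\rho$ and axis boundary weights equal to the Busemann increments $B^\rho_{(k-1)e_i,\, ke_i}$). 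On the axes the identity is immediate from the definition of $G^{*,\rho}$ on the boundary together with the telescoping additivity $B^\rho_{0,\, ke_i} = \sum_{j=1}^{k} B^\rho_{(j-1)e_i,\, j e_i}$.

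Given this identification, the local comparison at any $y \in e^*+\Z^2_{\ge 0}$ is read off directly. Write $\tilde y = y+e^*$. Busemann additivity gives $B^\rho_{0,\tilde y} - B^\rho_{0,\tilde y - e_i} = B^\rho_{\tilde y - e_i,\,\tilde y}$, and hence
\[ G^{*,\rho}_{-e^*,\, y - e_1} > G^{*,\rho}_{-e^*,\, y - e_2} \ \iff \ B^\rho_{\tilde y - e_1,\,\tilde y} < B^\rho_{\tilde y - e_2,\,\tilde y}. \]
The left condition determines from which neighbor the unique geodesic of $G^{*,\rho}_{-e^*, y}$ enters $y$; the right condition is precisely the rule \eqref{bg16} that tells $\mathbf{b}^{{\rm sw},\rho, \tilde y}$ (equivalently $\mathbf{b}^{*,\rho, y}$) which SW direction to take out of $\tilde y$. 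Because the Busemann increments in \eqref{buse78} are continuous, these two conditions agree almost surely. This is the \emph{matching property} at $y$: the edge by which the unique geodesic of $G^{*,\rho}_{-e^*, y}$ arrives at $y$ is the same as the first SW edge of $\mathbf{b}^{*,\rho, y}$ out of $y$.

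To close the argument, let $\pi = (\pi_0, \dots, \pi_n)$ be the geodesic of $G^{*,\rho}_{-e^*, w}$ with $\pi_n = w$ and induct on $k$ with hypothesis $\pi_{n-k} = \mathbf{b}^{*,\rho, w}_k \in e^*+\Z^2_{\ge 0}$; the base case $k = 0$ is trivial. Since the prefix $(\pi_0, \dots, \pi_{n-k})$ is the unique geodesic of $G^{*,\rho}_{-e^*,\, \pi_{n-k}}$, the matching property at $\pi_{n-k}$ yields $\pi_{n-k-1} = \mathbf{b}^{*,\rho, w}_{k+1}$, advancing the induction by one step. Crucially, the matching remains valid when $\pi_{n-k}$ is on the inner boundary of $e^*+\Z^2_{\ge 0}$ because the identification above holds uniformly on the closed quadrant $-e^*+\Z^2_{\ge 0}$. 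Consequently the induction captures every edge of $\mathbf{b}^{*,\rho, w}$ with at least one endpoint in $e^*+\Z^2_{\ge 0}$, including the exit edge where the path leaves the quadrant. The main (and rather minor) obstacle is to track carefully the $e^*$-shift between the primal and dual lattices when invoking \eqref{needexact}, together with checking that the boundary case of the identification follows from telescoping.
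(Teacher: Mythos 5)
Your proof is correct. The paper does not supply a self-contained argument for Proposition \ref{sec3prop1}: it remarks that the statement is a variant of the nested-LPP Lemma \ref{sec3lem2a} and then cites Prop.~5.1 of \cite{coalnew}, which is the same result stated before the $e^*$-shift in terms of the southwest paths \eqref{bg16} and the dual weights $\widecheck{\omega}^\rho$. So there is no in-text proof to compare against word for word, but your route is the natural one and is almost certainly what the cited reference does: (a) observe that after the $e^*$-shift, $G^{*,\rho}_{-e^*,\cdot}$ is exactly the stationary LPP process based at $0$ with Busemann axis weights and bulk weights $\widecheck{\omega}^\rho$, so \eqref{needexact} gives $G^{*,\rho}_{-e^*,y}=B^\rho_{0,y+e^*}$ on the whole quadrant; (b) use Busemann additivity to turn the argmax comparison $G^{*,\rho}_{-e^*,y-e_1}\lessgtr G^{*,\rho}_{-e^*,y-e_2}$ into the comparison $B^\rho_{\tilde y-e_1,\tilde y}\gtrless B^\rho_{\tilde y-e_2,\tilde y}$ that drives \eqref{bg16}; (c) induct backward along the geodesic of $G^{*,\rho}_{-e^*,w}$, using that its prefix to any visited vertex is again a geodesic. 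This mirrors the spirit of Lemma \ref{sec3lem2a} (matching boundary increments make the local comparisons identical), but is grounded directly in \eqref{needexact} rather than in a limit from the finite nested lemma. Two very minor remarks: the split into ``interior'' and ``axes'' when establishing $G^{*,\rho}_{-e^*,y}=B^\rho_{0,y+e^*}$ is not needed, since \eqref{needexact} is stated for all $y'\ge x$ including axis points (your telescoping argument is a fine alternative); and the a.s.\ absence of ties follows because $B^\rho_{\tilde y-e_1,\tilde y}$ and $B^\rho_{\tilde y-e_2,\tilde y}$ are independent continuous random variables by Theorem \ref{t:buse}(iii) applied to the down-right path through $\tilde y-e_1,\tilde y,\tilde y-e_2$, which you implicitly use.
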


Proposition \ref{sec3prop1}, illustrated in Figure \ref{fig5}, is another version of Lemma \ref{sec3lem2a}. It is proved as Prop.~5.1 in \cite{coalnew} but without the shift to the dual lattice, so in terms of the southwest geodesics in \eqref{bg16} for the weights $\widecheck{\omega}^\rho$.

\begin{figure}[t]
\captionsetup{width=0.8\textwidth}
\begin{center}
 
\begin{tikzpicture}[>=latex, scale=0.8]

\draw[color=lightgray,line width = 0.3mm, ->] (0,-1)--(0,3);
\draw[color=lightgray,line width = 0.3mm, ->] (0,-1)--(5,-1);

\draw[color=gray, dotted, line width = 1mm, <-] (-1.5, 0)-- (0.5,0) --(0.5,1)-- (2.5,1)--(2.5,1.5) -- (2.5,2) -- (3.5,2) ;

\fill[color=white] (0,-1)circle(1.7mm); 
\draw[ fill=white](0,-1)circle(1mm);
\node at (-0.6,0-1) {\small $-e^*$};

\fill[color=white] (3.5,2)circle(1.7mm); 
\draw[ fill=lightgray](3.5,2)circle(1mm);
\node at (3.8,2) {\small $w$};

\fill[color=white] (0,0)circle(1.7mm); 
\draw[ fill=lightgray](0,0)circle(1mm);

\draw[color=lightgray,line width = 0.3mm, ->] (0+7.5,-1)--(0+7.5,3);
\draw[color=lightgray,line width = 0.3mm, ->] (0+7.5,-1)--(5+7.5,-1);

\draw[color=darkgray, dotted, line width = 1.2mm,] (0+7.5,-1) --(0+7.5,0)-- (0.5+7.5,0) --(0.5+7.5,1)-- (2.5+7.5,1) --(2.5+7.5,1.5) -- (2.5+7.5,2) -- (3.5+7.5,2) ;

\draw[color=gray, dotted, line width = 1mm, <-] (-1.5+7.5,0) -- (0+7.5,0);
\fill[color=white] (0+7.5,-1)circle(1.7mm); 
\draw[ fill=white](0+7.5,-1)circle(1mm);
\node at (-0.6+7.5,0-1) {\small $-e^*$};

\fill[color=white] (3.5+7.5,2)circle(1.7mm); 
\draw[ fill=lightgray](3.5+7.5,2)circle(1mm);
\node at (3.8+7.5,2) {\small $w$};

\fill[color=white] (7.5,0)circle(1.7mm); 
\draw[ fill=lightgray](7.5,0)circle(1mm);

\end{tikzpicture}
 
\end{center}
\caption{\small Illustration of Proposition \ref{sec3prop1}.  On the left the dual semi-infinite geodesic $\mathbf{b}^{*,\rho, w}$ (light dotted path). On the right the  geodesic of  $G^{*,\,\rho}_{-e^*, w}$ (dark dotted path). The two paths coincide in the bulk.}
\label{fig5}
\end{figure}

We are ready to prove the  main results.

\begin{proof}[Proof of Theorem \ref{t:main-small}]  
Referring to Figure \ref{fig12},  geodesics   $\bgeod{\,\rho}{(0, \fl{\delta N^{2/3}})}$ and $\bgeod{\,\rho}{(\fl{\delta N^{2/3}},0)}$  (gray dotted lines) coalesce outside $\lzb 0, v_N\rzb $ if and only if  some  dual geodesic started outside of $\lzb0,v_N\rzb - e^*$  (black dotted line) enters the square $\lzb (0,0), (\floor{\delta N^{2/3}},\floor{\delta N^{2/3}})\rzb $. From Proposition \ref{sec3prop1}, the restrictions of these dual geodesics are the $\rho$-geodesics of the stationary LPP process on $-e^* + \Z^2_{\geq 0}$ with Busemann boundary weights on the south and west. Consequently
\be\label{r804}\begin{aligned}
&\mathbb{P} \big\{\coal^\rho( \fl{\delta N^{2/3}}e_1 , \fl{\delta N^{2/3}}e_2) \not\in \lzb 0, v_N\rzb\big\} 
=\mathbb{P}^\rho\bigl\{ \exists z\notin \lzb  0, v_N \rzb  : \,  |\exittime^{\,0\,\rightarrow \,z} | \leq \delta N^{2/3}\bigr\} .
\end{aligned}\ee
The bounds claimed in Theorem \ref{t:main-small} follow from Theorems \ref{t:small-lb} and  \ref{t:small-ub}.
 \end{proof}

\begin{figure}[t]
\captionsetup{width=0.8\textwidth}
\begin{center}
 
\begin{tikzpicture}[>=latex, scale=1.3]

\draw[line width = 0.3mm, ->] (0,0)--(0,4);
\draw[line width = 0.3mm, ->] (0,0)--(5,0);

\draw[color=gray, dotted, line width = 1mm, ->] (1,0)--(1,0.5) -- (1.5,0.5) -- (1.5,1)--(2,1)--(2,1.5)--(2.5,1.5) -- (2.5,3) ;

\draw[color=gray, dotted, line width = 1mm, ->] (0,1)--(1,1) -- (1,1.8) -- (1.7,1.8)--(1.7,3.5) -- (2.6,3.5);

\draw[color=gray, dotted, line width = 0.5mm]  (3,0) -- (3,2)--(0,2);

\draw[ dotted, line width = 1mm] (2.1,2.2)--(2.1, 1.65) -- (1.6, 1.65) -- (1.6, 1.2) --(1.2,1.2) -- (1.2,0.7) -- (0.4,0.7) -- (0.4, 0.5) -- (-0.1, 0.5)--(-0.1,-0.1);

\draw[ dotted, line width = 0.6mm, ->]  (0,0.5) --(-0.7,0.5) --  (-0.7, -0.5) --(-1,-0.5) -- (-1,-1)  ;

\draw[ fill=black](2.1,2.2)circle(1mm);
\node at (2.1,2.5) {\small $x^*$};

\fill[color=white] (0,1)circle(1.7mm); 
\draw[ fill=lightgray](0,1)circle(1mm);
\node at (-0.9,1) {\small $(0, \delta N^{2/3})$};

\fill[color=white] (1,0)circle(1.7mm); 
\draw[ fill=lightgray](1,0)circle(1mm);
\node at (1,-0.4) {\small $(\delta N^{2/3},0)$};

\fill[color=white] (3,2)circle(1.7mm); 
\draw[ fill=lightgray](3,2)circle(1mm);
\node at (3.6,2) {\small $v_N-e^*$};

\draw[ fill=black](-0.1,-0.1)circle(1mm);
\node at (-0.1,-0.4) {\small $-e^*$};

\end{tikzpicture}
 
\end{center}
\caption{\small Geodesics $\bgeod{\,\rho}{(\floor{\delta N^{2/3}},0)}$ and  $\bgeod{\,\rho}{(0,\floor{\delta N^{2/3}})}$   (gray dotted lines)  coalesce outside $\lzb0,v_N\rzb$.  Equivalently, some dual point $x^*$ outside of $\lzb0,v_N\rzb - e^*$ sends a  dual geodesic (black dotted line)  into the rectangle $\lzb (0,0), (\floor{\delta N^{2/3}},\,\floor{\delta N^{2/3}})\rzb $.}
\label{fig12}
\end{figure}



\begin{proof}[Proof of Theorem \ref{t:main-large}] 
 Referring to   Figure \ref{fig20},  geodesics   $\bgeod{\,\rho}{(0, \fl{rN^{2/3}})}$ and $\bgeod{\,\rho}{(\fl{rN^{2/3}},0)}$  (gray dotted lines) coalesce inside $\lzb 0, v_N\rzb $ if and only if  every  dual geodesic started from the north and east boundaries of $\lzb-e^*,v_N + e^*\rzb$  (black dotted lines) avoids the square $\lzb (0,0), (\floor{rN^{2/3}}, \floor{rN^{2/3}})\rzb $.  From Proposition \ref{sec3prop1}, the restrictions of these dual geodesics are the $\rho$-geodesics of the stationary LPP process  on $-e^* + \Z^2_{\geq 0}$ with Busemann boundary weights on the south and west,   
\be\label{r800}  \begin{aligned}
&\mathbb{P}\bigl\{ \coal^\rho( \fl{rN^{2/3}}e_1, \fl{rN^{2/3}}e_2)\in \lzb 0, v_N\rzb \bigr\} 
=\mathbb{P}^\rho\bigl\{ \forall z\notin \lzb  0, v_N \rzb  : \,  |\exittime^{\,0\,\rightarrow \,z}|  \geq r N^{2/3}\bigr\} . 
\end{aligned}\ee 
The lower bound claimed in Theorem \ref{t:main-large} follows  from Theorem \ref{t:large-ub}.  The claimed upper bound is a trivial weakening of Theorem \ref{t:exit1}. 
\end{proof}


\begin{figure}[t]
\captionsetup{width=0.8\textwidth}
\begin{center}
\begin{tikzpicture}[>=latex, scale=1.3]
\draw[fill=lightgray ] (1,0) -- (1,1) --(0,1) -- (0,0)-- (1,0);

\draw[gray, line width=0.3mm, ->] (0,0) -- (5,0);
\draw[gray, line width=0.3mm, ->] (0,0) -- (0,4);

\draw[gray, dotted, line width=1mm, ->] (1,0) -- (1.5,0) -- (1.5,1) -- (2,1)-- (2,2) -- (3,2) -- (3,4);
\draw[gray, dotted, line width=1mm] (0,1) -- (0.4,1) -- (0.4,2)--  (1,2) -- (2,2);

\draw[black, dotted, line width=1mm] (2.8,3.2) -- (2.8,2.5) -- (1, 2.5) -- (1,2.3) -- (0.2, 2.3) -- (0.2,1.2) -- (0, 1.2);

\draw[black, dotted, line width= 0.6mm, ->] (0,1.2) -- (-1,1.2);

\draw[black, dotted, line width=1mm] (3.2,3.2) -- (3.2,0.8)  -- (1.9,0.8) -- (1.9,0);

\draw[black, dotted, line width=0.6 mm, ->] (1.9,0) -- (1.9,-1) ;

\draw[black, dotted, line width=1mm,] (1.9,-0.1) -- (-0.1,0-0.1)  -- (-0.1,1.2)  ;

\fill[color=white] (2.8,3.2)circle(1.7mm); 
\draw[ fill=lightgray](2.8,3.2)circle(1mm);

\fill[color=white] (3.2,3.2)circle(1.7mm); 
\draw[ fill=lightgray](3.2,3.2)circle(1mm);

\draw[gray ,dotted, line width=0.3mm, ] (0.8*5,0) -- (0.8*5,0.8*4) --(0,0.8*4) ;

\fill[color=white] (0.8*5, 0.8*4)circle(1.7mm); 
\draw[ fill=lightgray](0.8*5, 0.8*4)circle(1mm);
\node at (0.8*5 + 0.7, 0.8*4) {$v_N+e^*$};

\draw[black, line width=1mm] (1, -0.2) -- (1, 0.1);
\draw[black, line width=1mm] (-0.2, 1) -- (0.1, 1);
\node at (1, -0.5) {$(rN^{2/3}, 0)$};

\node at (-0.9, 0.8) {$(0, rN^{2/3})$};
\node at (-0.5, -0.2) {$-e^*$};

\fill[color=white] (2.8,3.2)circle(1.7mm); 
\draw[ fill=lightgray](2.8,3.2)circle(1mm);
\node at (2.7,3.5) {\small $u^*$};

\fill[color=white] (3.2,3.2)circle(1.7mm); 
\draw[ fill=lightgray](3.2,3.2)circle(1mm);
\node at (3.6,3.5) {\small $u^*+e_1$};

\draw[ fill=black](-0.1,-0.1)circle(1mm);

\end{tikzpicture}
\end{center}
\caption{\small None of the the $\rho$-geodesics will enter the gray square because they are bounded away by the two dual geodesics (black dotted lines) drawn above.  }
\label{fig20}
\end{figure}

\begin{proof}[Proof of Corollary \ref{samelevel}] From the duality, it suffices to show 
\begin{enumerate}[(i)]
\item $\mathbb{P}^\rho \big\{\exists z \text{ outside } \lzb 0, v_N\rzb  \text{ such that } 1\leq \exittime^{\,0\,\to\,z} \leq \delta N^{2/3}\big\} \geq C_1\delta;$
\item $\mathbb{P}^\rho \big\{\exists z \text{ outside } \lzb 0, v_N\rzb  \text{ such that } 1\leq \exittime^{\,0\,\to\,z} \leq r N^{2/3}\big\} \geq 1-e^{-C_2 r^3}.$
\end{enumerate}
We establish  (ii) from the special case 
\beq \label{samelevelii}\mathbb{P}^\rho \big\{1\leq \exittime^{\,0\,\to \,v_N+\floor{\frac{1}{10}rN^{2/3}}e_1} \leq r N^{2/3}\big\} \geq 1-e^{-C_2r^2}. \eeq

Furthermore, from  \eqref{samelevelii} the proof of Theorem \ref{t:small-lb} can be adapted to prove (i), by partitioning 
$[0, rN^{2/3}]$ into intervals  of size $\le\delta rN^{2/3}$ and repeating the argument.

Inequality  \eqref{samelevelii} comes from the estimates
\begin{align}
&\mathbb{P}^\rho \big\{\exittime^{\,0\,\to\,  v_N+\floor{\frac{1}{10}rN^{2/3}}e_1} \leq -1 \big\} \leq e^{-Cr^3} \label{samelevel1}\\
&\mathbb{P}^\rho \big\{\exittime^{\,0\,\to\,  v_N+\floor{\frac{1}{10}rN^{2/3}}e_1} > r N^{2/3}\big\} \leq e^{-Cr^3} \label{samelevel2}.
\end{align}

Inequality \eqref{samelevel1} is bound  \eqref{weprove} of  Corollary \ref{sec3cor}.    For \eqref{samelevel2},   apply Lemma \ref{sec3lem2} to the process $G^{(0),\,\rho}_{z, \,\bbullet}$ with the new base point $z=\floor{\frac{1}{10}rN^{2/3}}e_1$, and then   Theorem \ref{t:exit1}: 
\begin{align*}  
&\mathbb{P}^\rho \big\{\exittime^{\,0\,\to\, v_N+\floor{\frac{1}{10}rN^{2/3}}e_1} \ge  r N^{2/3}\big\}  
 \leq  \mathbb{P}^\rho \big\{\exittime^{\,0\,\to\, v_N} \ge   \tfrac9{10}  r N^{2/3}\big\}
 \leq e^{-Cr^3}.
 \end{align*} 
\end{proof}

\begin{proof}[Proof of Theorem \ref{fluc}]
If the semi-infinite geodesic $\mathbf{b}^{\rho, (0,0)}$ enters the interior of the square $\lzb v_N-(\delta N^{2/3}, \delta N^{2/3}), v_N\rzb $ as shown in Figure \ref{flucpic}, 
we obtain a $\rho$-geodesic from Proposition \ref{sec3prop1} whose exit time satisfies $\abs{\exittime^{NE, 0\,\to\,v_N}}\le \delta N^{2/3}$. Applying the exit time estimate Theorem  \ref{t:small-ub} finishes the proof.
\end{proof}

\begin{figure}[t]
\captionsetup{width=0.8\textwidth}
\begin{center}
 
\begin{tikzpicture}[>=latex, scale=0.8]

\draw[color = lightgray,line width = 0.3mm, ->] (0,0)--(6,0);
\draw[color = lightgray, line width = 0.3mm, ->] (0,0)--(0,5);

\draw[color=gray, dotted, line width = 1mm, ->] (3,3.36) --(3,5) -- (6,5) -- (6,6);

\draw[color=darkgray, dotted, line width = 1.2mm] (0,0)--(0,0.5) -- (1,0.5) --(1,2) -- (3,2) -- (3,3.6) -- (3.6,3.6);

\draw[color=darkgray, line width = 0.4mm, ->] (3.6,3.6) -- (1,3.6);
\draw[color=darkgray, line width = 0.4mm, ->] (3.6,3.6) -- (3.6,1);

\draw[line width = 0.3mm] (3.5,3.5) -- (2.5,3.5)--(2.5,2.5) --(3.5,2.5) --(3.5,3.5);


\fill[color=white] (3.6,3.6)circle(1.4mm); 
\draw[ fill=white](3.6,3.6)circle(1mm);
\node at (4.2,3.6) {\small $v_N$};

\fill[color=white] (0,0)circle(1.7mm); 
\draw[ fill=lightgray](0,0)circle(1mm);
\node at (-0.3,-0.4) {\small $(0,0)$};

\node at (5,5.5) {\small $\mathbf{b}^{\rho, (0,0)}$};

\end{tikzpicture}

\end{center}
\caption{\small The square in the picture is $\lzb v_N-(\delta N^{2/3}, \delta N^{2/3}), v_N\rzb $. We obtain a $\rho$-geodesic with north and east boundaries from the semi-infinite geodesic in gray.}
\label{flucpic}
\end{figure}

\appendix
\section{Appendix}
Below  is the random walk estimate for the proof of Theorem \ref{t:small-ub}.  It is proved as Lemma C.1 in Appendix C of \cite{balzs2019nonexistence}. 
\begin{lemma}\label{rw}
Let $\alpha > \beta > 0$. Let  $S_n = \sum_{k=1}^n Z_k$ be a random walk with step distribution $Z_k \sim {\rm Exp}(\alpha) - {\rm Exp}(\beta)$ {\rm(}difference of independent exponentials{\rm)}. Then there is an absolute constant $C$ independent of all the parameters such that for $n\in \Z_{>0}$,  
\beq \label{rw34.9}\mathbb{P} (S_1<0, S_2 <0, \cdots , S_n < 0) \leq \frac{C}{\sqrt n} \left(1- \frac{(\alpha-\beta)^2}{(\alpha+\beta)^2} \right)^{\!n} + \frac{\alpha - \beta}{\alpha}.\eeq

\end{lemma}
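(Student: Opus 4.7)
The plan is to decompose $\{S_1<0,\dots,S_n<0\}=\{M_n<0\}$, with $M_n=\max_{1\le k\le n}S_k$, according to whether the walk ever reaches $[0,\infty)$ after time $n$. Let $T=\inf\{k\ge 1:S_k\ge 0\}$; the constant $(\alpha-\beta)/\alpha$ in the bound will be identified with $\mathbb{P}(T=\infty)$ exactly, while the $C/\sqrt n$ factor will come from a double exponential tilt combined with a classical ballot estimate.

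The first step is to compute $\mathbb{P}(T=\infty)$. A direct check gives $E[e^{(\alpha-\beta)Z_1}]=\tfrac{\alpha}{\beta}\cdot\tfrac{\beta}{\alpha}=1$, so $\{e^{(\alpha-\beta)S_n}\}_{n\ge 0}$ is a mean-one positive martingale under $\mathbb P$. On $\{T<\infty\}$, the overshoot $S_T$ is $\mathrm{Exp}(\alpha)$-distributed by the memorylessness of $X_T$. Optional stopping at $T\wedge n$ together with dominated convergence (using $S_n\to-\infty$ a.s.\ on $\{T=\infty\}$ and the integrable bound $1\vee e^{(\alpha-\beta)S_T}$) yields
\[\mathbb{P}(T<\infty)\cdot E\bigl[e^{(\alpha-\beta)S_T}\bigm|T<\infty\bigr]=1,\]
and the conditional expectation evaluates to $\alpha/\beta$. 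Hence $\mathbb{P}(T=\infty)=(\alpha-\beta)/\alpha$. A standard renewal decomposition at time $T$ then identifies the all-time maximum $M_\infty^+:=\sup_{k\ge 0}S_k$ as having atom $(\alpha-\beta)/\alpha$ at $0$ and tail $\mathbb{P}(M_\infty^+>x)=(\beta/\alpha)e^{-(\alpha-\beta)x}$ for $x>0$.

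The Markov property at time $n$ and this tail formula give, on $\{S_n<0\}$,
\[\mathbb{P}\bigl(\max_{k>n}S_k\ge 0\bigm|S_n\bigr)=\tfrac{\beta}{\alpha}\,e^{(\alpha-\beta)S_n},\]
so that
\[\mathbb{P}(M_n<0)=\mathbb{P}(T=\infty)+\tfrac{\beta}{\alpha}\,E\bigl[\mathbf{1}_{M_n<0}\,e^{(\alpha-\beta)S_n}\bigr].\]
Changing measure via $d\hat P/d\mathbb P\big|_{\mathcal F_n}=e^{(\alpha-\beta)S_n}$ turns the step law into $\mathrm{Exp}(\beta)-\mathrm{Exp}(\alpha)$ (the $\alpha,\beta$ swap, so the walk has positive drift), and the expectation above equals $\hat P(M_n<0)$.

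Finally I would tilt once more by $\lambda^\star=(\alpha-\beta)/2$, the minimizer of $\hat\phi(\lambda)=\hat E[e^{-\lambda Z_1}]=\frac{\alpha\beta}{(\beta+\lambda)(\alpha-\lambda)}$, whose minimum value is $\hat\phi(\lambda^\star)=\frac{4\alpha\beta}{(\alpha+\beta)^2}=1-\frac{(\alpha-\beta)^2}{(\alpha+\beta)^2}$. Under the resulting measure $\tilde P$ the walk is centered with symmetric Laplace steps $\mathrm{Exp}(\tfrac{\alpha+\beta}{2})-\mathrm{Exp}(\tfrac{\alpha+\beta}{2})$. Since $e^{\lambda^\star S_n}\le 1$ on $\{M_n<0\}$,
\[\hat P(M_n<0)\le \hat\phi(\lambda^\star)^n\,\tilde P(S_1<0,\dots,S_n<0),\]
and the Sparre-Andersen identity for centered continuous symmetric walks gives $\tilde P(S_1<0,\dots,S_n<0)=\binom{2n}{n}4^{-n}\le C/\sqrt n$ with absolute $C$. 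Bounding $\beta/\alpha\le 1$ finishes the proof. The most delicate step will be the exact identification $\mathbb{P}(T=\infty)=(\alpha-\beta)/\alpha$ via optional stopping and overshoot analysis; the rest reduces to standard exponential tilting and a classical ballot-type estimate.
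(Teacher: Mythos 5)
The paper does not contain its own proof of this lemma; it cites Lemma C.1 of Bal\'azs--Busani--Sepp\"al\"ainen (reference [balzs2019nonexistence]), so a direct comparison is not possible from the source given. Your argument, however, is correct and self-contained, and its structure matches exactly what the form of the bound suggests: the additive constant $(\alpha-\beta)/\alpha$ is identified precisely as $\mathbb{P}(T=\infty)$, and the geometric factor $4\alpha\beta/(\alpha+\beta)^2$ is the Cram\'er saddle-point value.

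All the main steps check out. The Lundberg root $\theta=\alpha-\beta$ does give $E[e^{(\alpha-\beta)Z_1}]=1$; since the positive part of $Z_1$ has density proportional to $e^{-\alpha z}$ on $(0,\infty)$, the overshoot $S_T$ is indeed $\mathrm{Exp}(\alpha)$ on $\{T<\infty\}$, giving $E[e^{(\alpha-\beta)S_T}\mid T<\infty]=\alpha/\beta$; optional stopping at $T\wedge n$ plus dominated convergence (the dominating function is $1+e^{(\alpha-\beta)S_T}\mathbf{1}_{T<\infty}$, whose integrability follows from Fatou applied to the same nonnegative martingale) yields $\mathbb{P}(T<\infty)=\beta/\alpha$. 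The renewal decomposition at $T$ gives $M_\infty^+$ as a geometric compound of $\mathrm{Exp}(\alpha)$ jumps, hence an atom at $0$ of mass $(\alpha-\beta)/\alpha$ and continuous part $\mathrm{Exp}(\alpha-\beta)$, so $\mathbb{P}(M_\infty^+>x)=(\beta/\alpha)e^{-(\alpha-\beta)x}$. The exact decomposition $\mathbb{P}(M_n<0)=\mathbb{P}(T=\infty)+(\beta/\alpha)E[\mathbf{1}_{M_n<0}e^{(\alpha-\beta)S_n}]$, the first tilt to $\hat P$ swapping $\alpha\leftrightarrow\beta$, the second tilt by $\lambda^\star=(\alpha-\beta)/2$ (which makes the step a symmetric Laplace with parameter $(\alpha+\beta)/2$), the bound $e^{\lambda^\star S_n}\le 1$ on $\{M_n<0\}$, and the Sparre--Andersen identity $\tilde P(S_1<0,\dots,S_n<0)=\binom{2n}{n}4^{-n}\le C/\sqrt n$ are all correct and assemble to give precisely \eqref{rw34.9}, even with $\beta/\alpha\le 1$ absorbed into $C$. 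The only nitpick is notational: the dominating function in the optional-stopping step should be written $1+e^{(\alpha-\beta)S_T}\mathbf{1}_{\{T<\infty\}}$, since $S_T$ is undefined on $\{T=\infty\}$. This is a complete and rigorous proof.
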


Next the moment bound on the Radon-Nikodym   for  the proof of Theorem \ref{t:large-ub}.

\begin{lemma} \label{l:radnik} Let  $a>0$, $b\in\R$,  and $N\in\Z_{>0}$.  For $\rho>0$,  let $Q^\rho$ be the  probability distribution on the product space $\Omega=\R^{\fl{aN^{1/3}}}$  under which the coordinates $X_i(\w)=\w_i$  are i.i.d.\ {\rm Exp}$(\rho)$ random variables.   Assume that 
\be\label{appNass}   N\ge   \abs{b}^3 \rho^{-3} (1-\eta)^{-3}  \ee
for some $\eta\in(0,1)$.     Let $f$ denote the Radon-Nikodym derivative 
\[  f(\w)=\frac{d Q^{\rho+bN^{-1/3}}}{dQ^\rho}(\w). \]
Then 
\[  E^{Q^\rho}[ f^2]   
\le \exp\biggl\{  \frac{ab^2}{\rho^2}  +  \frac{10a{\abs b}^3}{3\rho^3\eta N^{1/3}} \biggr\}. 
\]

\end{lemma}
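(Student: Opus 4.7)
The strategy is a direct computation. I would write the Radon-Nikodym derivative $f$ as a product of one-dimensional density ratios, square and take the $Q^\rho$-expectation using independence together with the exponential moment generating function, then Taylor-expand to separate the leading quadratic-in-$b$ term from the cubic correction.

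Let $n = \fl{aN^{1/3}}$ denote the dimension of the product space and set $c = bN^{-1/3}$. The one-coordinate density ratio equals $(1 + c/\rho)\exp(-c\,\omega_i)$, so
\[
f(\omega) = \Bigl(1 + \tfrac{c}{\rho}\Bigr)^{\!n}\exp\Bigl(-c\sum_{i=1}^n\omega_i\Bigr).
\]
The hypothesis $N \geq |b|^3\rho^{-3}(1-\eta)^{-3}$ is equivalent to $|c| \leq (1-\eta)\rho$, which guarantees that $\rho + c > 0$ (so $Q^{\rho+c}$ is a legitimate distribution and $f$ is well defined) and, for $\eta$ in the relevant range, also that $\rho + 2c > 0$, which is what is needed for the moment generating function below to be finite.

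Squaring, using coordinate independence under $Q^\rho$, and the identity $E^{Q^\rho}[e^{tX}] = \rho/(\rho - t)$ at $t = -2c$ gives the closed form
\[
E^{Q^\rho}[f^2] = \biggl(\frac{(\rho + c)^2}{\rho(\rho + 2c)}\biggr)^{\!n}.
\]
The algebraic identity $(\rho+c)^2 = \rho(\rho+2c) + c^2$ rewrites the bracketed quantity as $1 + c^2/(\rho(\rho+2c))$, and then $\log(1+x)\leq x$ yields $\log E^{Q^\rho}[f^2] \leq nc^2/[\rho(\rho+2c)]$.

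The final step is to split $1/[\rho(\rho+2c)] = 1/\rho^2 - 2c/[\rho^2(\rho+2c)]$, decomposing the right-hand side into a main term $nc^2/\rho^2$ (matching the claimed $ab^2/\rho^2$ after substitution, since $nc^2 \leq ab^2/N^{1/3} \leq ab^2$) plus a cubic remainder of order $n|c|^3/[\rho^2(\rho+2c)]$. To control the remainder I would use the uniform lower bound $\rho + 2c \geq \rho\eta$ extracted from the hypothesis, producing an error of the form $\frac{10a|b|^3}{3\rho^3\eta N^{1/3}}$ after tracking constants. The main obstacle is the $b<0$ case, where the cubic correction carries the unfavorable sign and the bound really depends on the hypothesis-derived lower bound on $\rho+2c$ to avoid blow-up; for $b>0$ the correction has negative sign and is essentially free.
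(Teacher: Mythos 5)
Your computation of the closed form $E^{Q^\rho}[f^2]=\bigl((\rho+c)^2/(\rho(\rho+2c))\bigr)^n$ with $c=bN^{-1/3}$ agrees exactly with the paper's (there $\lambda=\rho+c$ and $2\lambda-\rho=\rho+2c$). After that point the two arguments diverge: the paper writes the logarithm as $2\log(1+b\rho^{-1}N^{-1/3})-\log(1+2b\rho^{-1}N^{-1/3})$ and invokes the tail bound $|\log(1+x)-x+\tfrac{x^2}{2}|\le |x|^3/(3\eta)$ to isolate the quadratic term and a cubic error, whereas you use the identity $(\rho+c)^2=\rho(\rho+2c)+c^2$, the bound $\log(1+x)\le x$, and a partial-fraction split. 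Your route is more elementary and, where it works, gives the stated constants with room to spare (your remainder is $2a|b|^3/(\eta\rho^3 N^{2/3})$ versus the claimed $10a|b|^3/(3\eta\rho^3 N^{1/3})$).

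There is, however, a genuine gap at the one step you yourself flag as critical. The hypothesis \eqref{appNass} is equivalent to $|c|\le(1-\eta)\rho$, from which the best available lower bound is $\rho+2c\ge\rho-2|c|\ge(2\eta-1)\rho$, \emph{not} $\rho+2c\ge\eta\rho$ as you assert: the latter would require $2\eta-1\ge\eta$, i.e.\ $\eta\ge1$. For $\eta\le\tfrac12$ the hypothesis even permits $\rho+2c\le0$, in which case $E^{Q^\rho}[f^2]=\infty$ and the whole argument (indeed the lemma as literally stated) breaks down for $b<0$. Your inequality $\rho+2c\ge\eta\rho$ would follow from the stronger assumption $|2c|\le(1-\eta)\rho$, i.e.\ $N\ge 8|b|^3\rho^{-3}(1-\eta)^{-3}$; note that the paper's own proof tacitly needs the same strengthening, since its bound \eqref{log4} is applied with $x=2b\rho^{-1}N^{-1/3}$ and is only valid for $|x|\le1-\eta$. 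So your argument is salvageable, but only after adjusting the hypothesis (or restricting to $b>0$, the case actually arising in the paper's tilting argument on the $e_1$-axis); as written, the control of the remainder for $b<0$ does not follow from \eqref{appNass}.
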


\begin{proof}  Let $\lambda=\rho+bN^{-1/3}$.  Assumption \eqref{appNass} implies that $\abs{\lambda-\rho}\le (1-\eta)\rho$ so in particular the distribution Exp$(\lambda)$ is well-defined.  Note  the inequality 
\be\label{log4}
\biggl\lvert \log (1+x) -x+\frac{x^2}2\,\biggr\rvert \le \sum_{k=3}^\infty \frac{{\abs x}^k}k
\le \frac{{\abs x}^3}{3\eta} 
\ee
valid for $\eta\in(0,1)$ and $\abs x\le 1-\eta$.  Apply it below to $x=b\rho^{-1} N^{-1/3}$ and $x=2b\rho^{-1} N^{-1/3}$.
\begin{align*}
    E^{Q^\rho} [f^2] & = \int_\Omega \biggl(\;\prod_{i=1}^{\floor{aN^{2/3}}} \frac{\lambda e^{-\lambda \omega_i}}{\rho e^{-\rho \omega_i}}\biggr)^2\, Q(d\w) 
    =\left(\frac{\lambda^2}{\rho^2}\int_0^\infty e^{-2(\lambda - \rho)x} \rho e^{-\rho x} dx\right)^{\floor{aN^{2/3}}}\\
    & =
    \left( \frac{\lambda^2}{\rho (2\lambda -\rho)}\right)^{\floor{aN^{2/3}}}
    = \exp\big\{{\floor{aN^{2/3}}} \big[2\log\lambda - \log\rho - \log (2\lambda -\rho) \big] \big\}  \\[5pt] 
    &=\exp\big\{{\floor{aN^{2/3}}} \big[2\log(1+{b\rho^{-1} N^{-1/3}} ) -   \log (1+{2b\rho^{-1} N^{-1/3}} )  \big] \big\}\\[4pt]
    &\le \exp\biggl\{  \frac{ab^2}{\rho^2}  +  \frac{10a{\abs b}^3}{3\rho^3 N^{1/3}} \biggr\}.  
\end{align*}
\end{proof}

\bibliographystyle{plain}
\bibliography{coalbib}

\end{document}